\documentclass[a4paper,12pt]{amsart}
\usepackage{amssymb,amscd,amsmath}
\usepackage[all]{xy}
\usepackage[dvips]{graphicx}

\newcounter{mytheorem} 

\newtheorem{theorem}[mytheorem]{Theorem}
\newtheorem{definition}[mytheorem]{Definition}
\newtheorem{lemma}[mytheorem]{Lemma}

\newtheorem{example}[mytheorem]{Example}

\newtheorem{remark}[mytheorem]{Remark}
\newtheorem{corollary}[mytheorem]{Corollary}
\newtheorem{claim}[mytheorem]{Claim}
\newtheorem*{clm*}{Claim}

\makeatletter
\@addtoreset{subsection}{section}
\@addtoreset{subsubsection}{section}
\@addtoreset{subsubsection}{subsection}
\makeatother

\makeatletter
\newcommand\testshape{family=\f@family; series=\f@series; shape=\f@shape.}
\def\myemphInternal#1{\if n\f@shape%
\begingroup\itshape #1\endgroup\/%
\else\begingroup\bfseries #1\endgroup%
\fi}
\def\myemph{\futurelet\testchar\MaybeOptArgmyemph}
\def\MaybeOptArgmyemph{\ifx[\testchar \let\next\OptArgmyemph
                 \else \let\next\NoOptArgmyemph \fi \next}
\def\OptArgmyemph[#1]#2{\index{#1}\myemphInternal{#2}}
\def\NoOptArgmyemph#1{\myemphInternal{#1}}
\makeatother


\newcommand\RRR{\mathbb{R}}
\newcommand\CCC{\mathbb{C}}
\newcommand\ZZZ{\mathbb{Z}}
\newcommand\NNN{\mathbb{N}}
\newcommand\FFF{\mathbb{F}}

\newcommand\Int{\mathrm{Int}}

\newcommand\Per{\mathrm{Per}}

\newcommand\id{\mathrm{id}}
\newcommand\cl[1]{\overline{#1}}

\newcommand\defeq{:=}

\newcommand\AFld{F}
\newcommand\AFlow{\mathbf{\AFld}}
\newcommand\BFld{G}
\newcommand\BFlow{\mathbf{\BFld}}

\newcommand\HFld{H}
\newcommand\HFlow{\mathbf{\HFld}}

\newcommand\Mman{M}
\newcommand\afunc{\alpha}
\newcommand\bfunc{\beta}

\newcommand\mufunc{\mu}
\newcommand\amap{f}

\newcommand\dd{d}

\newcommand\Nman{N}

\newcommand\Qman{Q}
\newcommand\Vman{V}
\newcommand\Uman{U}
\newcommand\Wman{W}

\newcommand\wfpt[1]{\widehat{#1}}
\newcommand\VmanS{\wfpt{\Vman}}
\newcommand\WmanS{\wfpt{\Wman}}

\newcommand\domA{\mathsf{dom}(\AFlow)}

\newcommand\domB{\mathsf{dom}(\BFlow)}

\newcommand\funcA{\mathsf{func}(\AFlow)}
\newcommand\funcAV{\mathsf{func}(\AFlow,\Vman)}

\newcommand\funcAM{\mathsf{func}(\AFlow,\Mman)}

\newcommand\funcB{\mathsf{func}(\BFlow)}

\newcommand\ShA{\varphi}

\newcommand\ShAV{\ShA_{\Vman}}
\newcommand\ShAW{\ShA_{\Wman}}

\newcommand\ShAVS{\ShA_{\VmanS}}
\newcommand\ShAWS{\ShA_{\WmanS}}

\newcommand\ShAM{\ShA_{\Mman}}

\newcommand\kersh{\ker}
\newcommand\ZidA{\kersh(\ShA)}
\newcommand\ZidAV{\kersh(\ShAV)}
\newcommand\ZidAW{\kersh(\ShAW)}

\newcommand\ZidAVS{\kersh(\ShAVS)}
\newcommand\ZidAWS{\kersh(\ShAWS)}

\newcommand\eps{\varepsilon}

\newcommand\EAFlow{\mathcal{E}(\AFlow)}
\newcommand\DAFlow{\mathcal{D}(\AFlow)}
\newcommand\EidAFlow[1]{\mathcal{E}_{\id}(\AFlow)^{#1}}
\newcommand\DidAFlow[1]{\mathcal{D}_{\id}(\AFlow)^{#1}}
\newcommand\DiffM{\mathcal{D}(\Mman)}

\newcommand\EBFlow{\mathcal{E}(\BFlow)}

\newcommand\EidBFlow[1]{\mathcal{E}_{\id}(\BFlow)^{#1}}

\newcommand\EidAV[1]{\mathcal{E}_{\id}(\AFlow,\Vman)^{#1}}

\newcommand\EAV{\mathcal{E}(\AFlow,\Vman)}
\newcommand\EAM{\mathcal{E}(\AFlow,\Mman)}

\newcommand\EidAW[1]{\mathcal{E}_{\id}(\AFlow,\Wman)^{#1}}

\newcommand\Wtop{\mathsf{W}}

\newcommand\Wr[1]{\Wtop^{#1}}

\newcommand\Cnt[1]{\mathcal{C}}
\newcommand\Cont[1]{\mathcal{C}^{#1}}
\newcommand\Cinf{\Cont{\infty}}
\newcommand\Cr[3]{\Cont{#1}(#2,#3)}
\newcommand\Ci[2]{\Cr{\infty}{#1}{#2}}

\newcommand\orb{o}

\newcommand\FixF{\Sigma}
\newcommand\FixA{\FixF_{\AFld}}
\newcommand\FixB{\FixF_{\BFld}}

\newcommand\dfrmc[2]{(#1,#2)} 
\newcommand\dfrm[3]{(#1;#2,#3)}

\newcommand\ESDL[3]{\mathsf{ESD}(#1;#2,#3)}
\newcommand\EP[1]{(E)^{#1}}

\newcommand\GV{\Gamma^{+}_{\Vman}}
\newcommand\imGV{\ShAV(\GV)}

\newcommand\imSh[1]{Sh(#1)}
\newcommand\imShB{\imSh{\BFlow}}
\newcommand\imShAV{\imSh{\AFlow,\Vman}}

\newcommand\imShAM{\imSh{\AFlow,\Mman}}
\newcommand\imShA{\imSh{\AFlow}}

\newcommand\imShAW{\imSh{\AFlow,\Wman}}

\newcommand\PN{{\sf pn}}

\newcommand\dif{h}

\newcommand\orig{\mathsf{0}}

\newcommand\om{\orig_{m}}

\newcommand\on{\orig_{n}}

\newcommand\omn{\orig_{m+n}}

\newcommand\Vmanm{\Vman^{m}}
\newcommand\Vmann{\Vman^{n}}
\newcommand\Vmanmn{\Vman^{m+n}}
\newcommand\EAVmn{\mathcal{E}(\AFlow,\Vmanmn)}
\newcommand\EBVm{\mathcal{E}(\BFlow,\Vmanm)}
\newcommand\ShAVmn{\ShA_{\Vmanmn}}

\newcommand\Wmanm{\Wman^{m}}
\newcommand\Wmann{\Wman^{n}}

\newcommand\EBWm{\mathcal{E}(\BFlow,\Wmanm)}

\newcommand\mOmega{\Psi}

\newcommand\sx{\sigma}
\newcommand\cx{\tau}
\newcommand\pp{\rho}
\newcommand\tim{t}

\newcommand\Subsp{\mathcal{H}}
\newcommand\SmParMan{S}

\newcommand\ParMan{P}
\newcommand\CntParMan{T}
\newcommand\DestMan{U}

\newcommand\regext[1]{#1}
\newcommand\LT{{\rm(L)}}
\newcommand\RH{{\rm(H)}}
\newcommand\RHE{{\rm(HE)}}
\newcommand\RHS{{\rm(HS)}}
\newcommand\reLT{{\regext{\LT}}}
\newcommand\reRH{{\regext{\RH}}}
\newcommand\reRHE{{\regext{\RHE}}}
\newcommand\reRHS{{\regext{\RHS}}}

\newcommand\GSF{\mathsf{GSF}}

\newcommand\VFM{\mathcal{V}(\Mman)}
\newcommand\CMR{\Ci{\Mman}{\RRR}}
\newcommand\prs[1]{\langle#1\rangle}
\newcommand\fleq{\preccurlyeq}

\hyphenation{de-for-ma-ti-on}

\newcommand\RPF[1]{RP(\Vman)}

\newcommand\vfunc{v}
\newcommand\wfunc{w}
\newcommand\Jord{\mathbf{J}}

\newcommand\org{\cx_0}
\newcommand\ld{l}

\begin{document}
\title[Image of a shift map along the orbits of a flow]{Image of a shift map along \\ the orbits of a flow}
\author{Sergiy Maksymenko}
\address{Topology dept., Institute of Mathematics of NAS of Ukraine, Tere\-shchenkivs'ka st. 3, Kyiv, 01601 Ukraine}
\date{5/11/2009}
\email{maks@imath.kiev.ua}
\keywords{orbit preserving diffeomorphism, shift map}
\subjclass{37C10}

\begin{abstract}
Let $(\textbf{F}_t)$ be a smooth flow on a smooth manifold $M$ and $h:M\to M$ be a smooth orbit preserving map.
The following problem is studied: suppose that for every $z\in M$ there exists a germ $\alpha_z$ of a smooth function at $z$ such that $h(x)=F_{\alpha(x)}(x)$ near $z$; can the germs $(\alpha_z)_{z\in M}$ be glued together to give a smooth function on all of $M$?
This question is closely related to reparametrizations of flows.
We describe a large class of flows $(\textbf{F}_t)$ for which the above problem can be resolved, and show that they have the following property: any smooth flow $(\textbf{G}_t)$ whose orbits coincides with the ones of $(\textbf{F}_t)$ is obtained from $(\textbf{F}_t)$ by smooth reparametrization of time.
\end{abstract}

\maketitle

\section{Introduction}\label{sect:intro}
Let $\Mman$ be a smooth $(\Cinf)$, connected, $m$-dimensional manifold possibly non-compact and with or without boundary, $\AFld$ be a smooth vector field on $\Mman$ tangent to $\partial\Mman$ and generating a flow $\AFlow:\Mman\times\RRR \to \Mman$, and $\FixF$ be the set of singular points of $\AFld$.
For $x\in\AFld$ we will denote by $\orb_x$ the orbit of $x$ and if $x$ is periodic, then $\Per(x)$ is the period of $x$.

Let $\DiffM$ be the group of all $\Cinf$ diffeomorphisms of $\Mman$ and $\DAFlow$ be the subgroup of $\DiffM$ consisting of all \myemph{diffeomorphisms of $\AFlow$}, that is diffeomorphisms $\dif:\Mman\to\Mman$ such that $\dif(\orb)=\orb$ for every orbit of $\AFlow$.
A natural problem, which usually appears in studying functional spaces, is to \myemph{find general formulas for elements of $\DAFlow$}, or \myemph{parametrize $\DAFlow$ with elements of a certain space which seems to be simpler}, e.g. with functions.
In general this question is very difficult.
However, if we confine ourselves with the path component of $\DAFlow$ with respect to some natural topology, then in many cases the mentioned problem can be satisfactory resolved in terms of the flow of $\AFld$.

The aim of the present paper is to give sufficient conditions when the ``\myemph{local parametrizations of elements of $\DAFlow$ via functions can be glued to a global one}'', (Theorem\;\ref{th:suffcond_for_imShA_EidAk}). 
We also present a class of vector fields satisfying those conditions (Theorem\;\ref{th:LT_RH}).

We will now explain the meaning of the previous paragraph.
First of all it is more convenient to extend $\DAFlow$ and work with the subsemigroup $\EAFlow$ of $\Ci{\Mman}{\Mman}$ consisting of maps $\dif:\Mman\to\Mman$ such that 
\begin{enumerate}
\item
$\dif(\orb) \subset\orb$ for every orbit $\orb$ of $\AFld$;
 \item 
$\dif$ is a local diffeomorphism at every singular point $z\in\FixF$.
\end{enumerate}
We will call $\EAFlow$ the semigroup of \myemph{endomorphisms} of $\AFld$.
Evidently, $\DAFlow = \EAFlow\cap\DiffM$.

For $0\leq k\leq \infty$ denote by $\EidAFlow{k}$ (resp. $\DidAFlow{k}$) the path component of the identity map  $\id_{\Mman}$ in $\EAFlow$ (resp. $\DAFlow$) in the weak topology $\Wr{k}$, see \S\ref{sect:deformations}.
It follows that $\EidAFlow{k}$ consists of all maps $\dif\in\EAFlow$ which are homotopic to $\id_{\Mman}$ in $\EAFlow$ via a homotopy which \myemph{induces a homotopy on the level of $k$-jets}.
In particular, $\EidAFlow{0}$ consists of all $\dif\in\EAFlow$ homotopic to $\id_{\Mman}$ in $\EAFlow$.
Moreover, $\EidAFlow{k}$ for $k\geq1$ \myemph{contains (but does not coincide with)} the space of all $\dif\in\EAFlow$ homotopic to $\id_{\Mman}$ in $\EAFlow$ via some $\Cont{k}$ homotopy.
Similar descriptions hold for $\DidAFlow{k}$.

Define also the following map $\ShA:\Ci{\Mman}{\RRR}\to\Ci{\Mman}{\Mman}$ by 
$$
\ShA(\afunc)(x)=\AFlow(x,\afunc(x)), \qquad \afunc\in\Ci{\Mman}{\RRR},
$$
which will be called $\ShA$ the \myemph{shift map} along the orbits of $\AFld$, see\;\cite{Maks:TA:2003}%
\footnote{I must warn the reader that my paper \cite{Maks:TA:2003} contains some misprints and gaps.
However, they are not ``dangerous'' for the present paper, see Remark\;\ref{rem:Maks:TA:2003-gaps} below.}.
The map $\dif=\ShA(\afunc):\Mman\to\Mman$ will be called the \myemph{shift} via $\afunc$, which in turn will be a \myemph{shift function} for $\dif$.
Denote the image of $\ShA$ in $\Ci{\Mman}{\Mman}$ by $\imShA$.

\begin{lemma}\label{lm:sh_loc_diff}{\cite[Lm.\;20 \& Cor.\;21]{Maks:TA:2003}}
Let $\afunc\in\Ci{\Mman}{\RRR}$ and $z\in\Mman$.
Then $\ShA(\afunc)$ is a local diffeomorphism at $z$ iff and only if  $\AFld(\afunc)(z)\not=-1$, where $\AFld(\afunc)$ is the Lie derivative of $\afunc$ along $\AFld$.
In particular, this holds for each $z\in\FixF$, as $\AFld(\afunc)(z)=0\not=-1$.
\end{lemma}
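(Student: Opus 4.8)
The plan is to compute the differential of $\dif:=\ShA(\afunc)$ at $z$ and apply the inverse function theorem, treating the regular case $z\notin\FixF$ and the singular case $z\in\FixF$ separately. First I would differentiate $\dif(x)=\AFlow(x,\afunc(x))$ by the chain rule on $\AFlow\colon\Mman\times\RRR\to\Mman$, using that $\partial_t\AFlow(x,t)=\AFld(\AFlow(x,t))$; this gives, for $v\in T_z\Mman$,
$$
d\dif_z(v)\;=\;D_1\AFlow\big(z,\afunc(z)\big)(v)\;+\;d\afunc_z(v)\cdot\AFld\big(\dif(z)\big),
$$
where $D_1\AFlow(\cdot,t)$ denotes the differential of the time-$t$ diffeomorphism $x\mapsto\AFlow(x,t)$ of $\Mman$.

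If $z\in\FixF$, then $\dif(z)=z$, the second summand vanishes because $\AFld(z)=0$, and $d\dif_z=D_1\AFlow(z,\afunc(z))$ is the differential of a diffeomorphism, hence invertible; so $\dif$ is a local diffeomorphism at $z$, and this is consistent with the criterion since $\AFld(\afunc)(z)=d\afunc_z(\AFld(z))=0\neq-1$. If $z\notin\FixF$, then $\AFld(z)\neq0$, and I would pass to flow-box coordinates $(x_1,\dots,x_m)$ centered at $z$ in which $\AFld=\partial/\partial x_1$, so that $\AFlow$ acts by translation in $x_1$. In these coordinates $\dif$ has the explicit form $x\mapsto(x_1+\afunc(x),x_2,\dots,x_m)$; its Jacobian matrix at $z$ is triangular with diagonal $(1+\tfrac{\partial\afunc}{\partial x_1}(z),1,\dots,1)$, so $\det d\dif_z=1+\tfrac{\partial\afunc}{\partial x_1}(z)=1+\AFld(\afunc)(z)$. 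By the inverse function theorem $\dif$ is then a local diffeomorphism at $z$ if and only if $\AFld(\afunc)(z)\neq-1$.

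I do not expect a serious obstacle: once the flow box is set up, the statement reduces to a one-line determinant computation. The only points that deserve a word of care are the availability of the flow-box normal form at a point of $\partial\Mman$ — which holds because $\AFld$ is tangent to $\partial\Mman$, so $\AFlow$, and therefore $\dif$, preserves the boundary and the inverse function theorem for manifolds with boundary applies — and the fact that on $\FixF$ the $\partial_t$-contribution to $d\dif_z$ disappears, which is exactly why there the condition $\AFld(\afunc)(z)\neq-1$ holds automatically.
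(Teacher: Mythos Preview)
Your argument is correct and is exactly the standard one: the chain-rule expression for $d\dif_z$ together with the flow-box computation at regular points gives $\det d\dif_z=1+\AFld(\afunc)(z)$, and at singular points the $\partial_t$-term drops out so that $d\dif_z$ reduces to the tangent map of the time-$\afunc(z)$ diffeomorphism. The paper does not supply its own proof of this lemma but merely cites \cite[Lm.\;20 \& Cor.\;21]{Maks:TA:2003}; your write-up is precisely what one would expect that reference to contain, including the remark about $\partial\Mman$.
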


Finally, consider the following subset of $\Ci{\Mman}{\RRR}$:
$$\Gamma^{+}:=\{ \afunc\in\Ci{\Mman}{\RRR} \ : \ \AFld(\afunc)>-1\}.$$

\begin{lemma}\label{lm:imShA_EidAV}{\rm\cite[Lm.\;3.6]{Maks:MFAT:2009}}
The following inclusions hold true
\begin{equation}\label{equ:inclusions_imSh_EA}
\begin{array}{c}
 \imShA \;\subset\; \EidAFlow{\infty} \;\subset\; \cdots \;\subset\; \EidAFlow{1} \;\subset\; \EidAFlow{0}, \\ [2mm]
\imGV \;\subset\; \DidAFlow{\infty} \;\subset\; \cdots \;\subset\; \DidAFlow{1} \;\subset\; \DidAFlow{0}.
\end{array}
\end{equation}
If $\DidAFlow{k} \subset \imShA$ for some $k=0,\ldots,\infty$, e.g. if $\imShA = \EidAFlow{k}$, then $\ShA(\Gamma^{+}) = \DidAFlow{k}$.
\end{lemma}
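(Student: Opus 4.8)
The plan is to dispose of the two inclusion chains in \eqref{equ:inclusions_imSh_EA} directly and then prove the equality $\ShA(\Gamma^{+})=\DidAFlow{k}$, which is the only point requiring an idea. Since $\Wr{k+1}$ refines $\Wr{k}$, every $\Wr{k+1}$-continuous path is $\Wr{k}$-continuous, so the path component of $\id_{\Mman}$ only shrinks as $k$ grows; this gives $\EidAFlow{\infty}\subset\cdots\subset\EidAFlow{0}$ and, word for word, the chain for $\DidAFlow{\bullet}$. For $\imShA\subset\EidAFlow{\infty}$, given $\afunc\in\Ci{\Mman}{\RRR}$ I would use the homotopy $H(x,t)=\AFlow\bigl(x,t\afunc(x)\bigr)=\ShA(t\afunc)(x)$, $t\in[0,1]$, which is $\Cinf$ and joins $\id_{\Mman}$ to $\ShA(\afunc)$: each $\ShA(t\afunc)$ lies in $\EAFlow$ (it maps every orbit into itself, and at a singular point $z$ one has $\AFld(t\afunc)(z)=0\neq-1$, so $\ShA(t\afunc)$ is a local diffeomorphism there by Lemma\;\ref{lm:sh_loc_diff}), and by joint smoothness $t\mapsto\ShA(t\afunc)$ is $\Wr{\infty}$-continuous, even inducing a homotopy of $\infty$-jets; hence $\ShA(\afunc)\in\EidAFlow{\infty}$.

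For the diffeomorphism chain I would run the same $H$ inside $\DAFlow$. If $\afunc\in\Gamma^{+}$ then $t\afunc\in\Gamma^{+}$ for all $t\in[0,1]$: where $\AFld(\afunc)\ge0$ we have $t\AFld(\afunc)\ge0>-1$, and where $-1<\AFld(\afunc)<0$ we have $t\AFld(\afunc)\ge\AFld(\afunc)>-1$. By Lemma\;\ref{lm:sh_loc_diff} each $\ShA(t\afunc)$ is then a local diffeomorphism everywhere, and it is bijective: injectivity is checked orbit by orbit (on a nonperiodic nonsingular orbit the time map $s\mapsto s+\afunc(\AFlow(x_{0},s))$ has derivative $1+\AFld(\afunc)>0$, hence is strictly increasing; on a periodic orbit it descends to a degree-one, hence actual, diffeomorphism of a circle; singular points are fixed), while surjectivity is the content of the relevant lemma of \cite{Maks:TA:2003}. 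So $\ShA(t\afunc)\in\DAFlow$ for every $t$, $H$ is a $\Wr{\infty}$-continuous path in $\DAFlow$ from $\id_{\Mman}$ to $\ShA(\afunc)$, and $\ShA(\afunc)\in\DidAFlow{\infty}$; this is the second chain of \eqref{equ:inclusions_imSh_EA} (read $\imGV$ there as $\ShA(\Gamma^{+})$).

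For the equality, the inclusion $\ShA(\Gamma^{+})\subset\DidAFlow{k}$ is immediate from that chain. For the converse, fix $\dif\in\DidAFlow{k}$ and a $\Wr{k}$-continuous path $\dif_{t}$ in $\DAFlow$ from $\id_{\Mman}$ to $\dif$; each $\dif_{t}$ lies in the path component $\DidAFlow{k}$ of $\id_{\Mman}$, hence by the hypothesis $\DidAFlow{k}\subset\imShA$ it is a shift, $\dif_{t}=\ShA(\bfunc_{t})$ — so the whole path stays in $\imShA\cap\DAFlow$ and every $\dif_{t}$ carries each orbit onto itself. Now, for a shift $\ShA(\bfunc)$ that is a diffeomorphism, Lemma\;\ref{lm:sh_loc_diff} forces $\AFld(\bfunc)\neq-1$ everywhere, so the disjoint open sets $\{\AFld(\bfunc)>-1\}$ and $\{\AFld(\bfunc)<-1\}$ cover the connected manifold $\Mman$; moreover the first is nonempty whenever $\Mman$ has a singular point (there $\AFld(\bfunc)=0$) or a periodic orbit (around it $\AFld(\bfunc)$ has zero mean, so cannot be everywhere $<-1$). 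In those cases $\{\AFld(\bfunc)>-1\}=\Mman$, i.e.\ $\bfunc\in\Gamma^{+}$, and applying this to $\bfunc_{1}$ finishes the proof. Otherwise every orbit is a line and $\FixF=\varnothing$, so $\AFld$ is nowhere zero and every point has a flow-box neighborhood; then the ``flow orientation'' $\epsilon(\ShA(\bfunc)):=\operatorname{sign}\bigl(1+\AFld(\bfunc)\bigr)\in\{\pm1\}$ is a well-defined constant ($+1$ iff $\bfunc\in\Gamma^{+}$), and a short flow-box argument shows $\epsilon$ is locally constant on $\imShA\cap\DAFlow$ in the $\Wr{0}$-topology: if $\dif'$ is a shift-diffeomorphism $\Wr{0}$-close to $\ShA(\bfunc)$, then on a small embedded orbit arc $\dif'$ and $\ShA(\bfunc)$ are both increasing or both decreasing, whence $\epsilon(\dif')=\epsilon(\ShA(\bfunc))$. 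Hence $\epsilon$ is constant along the path $\dif_{t}$; since $\epsilon(\dif_{0})=\epsilon(\id_{\Mman})=+1$ we get $\epsilon(\dif)=+1$, i.e.\ $\bfunc_{1}\in\Gamma^{+}$ and $\dif\in\ShA(\Gamma^{+})$.

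I expect the converse inclusion to be the crux, and within it the decisive use of the hypothesis $\DidAFlow{k}\subset\imShA$: it is what forces the connecting path to consist of orbit-preserving maps, which is exactly what turns the flow orientation $\epsilon$ into an invariant that is genuinely locally constant along the path. The only other nontrivial ingredient, already needed for the second inclusion chain, is surjectivity of $\Gamma^{+}$-shifts, for which I rely on \cite{Maks:TA:2003}; the elementary remark that $\AFld(\bfunc)$ cannot straddle the value $-1$ on a connected manifold (Lemma\;\ref{lm:sh_loc_diff}) is what keeps $\epsilon$ two-valued.
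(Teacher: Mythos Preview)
The paper does not prove this lemma here (it is quoted from \cite{Maks:MFAT:2009}), so there is no in-paper argument to compare against. Your treatment of the inclusion chains and of $\imShA\subset\EidAFlow{\infty}$ via the linear homotopy $t\mapsto\ShA(t\afunc)$ is the natural one and is correct; your proof of the reverse inclusion $\DidAFlow{k}\subset\ShA(\Gamma^{+})$ via the ``flow-orientation'' invariant $\epsilon(\ShA(\bfunc))=\mathrm{sign}(1+\AFld(\bfunc))$ is also the right idea. The flow-box step for local constancy of $\epsilon$ is terse but can be made rigorous by noting that the image under a nearby $h'$ of a short orbit arc is connected, hence lies in a single plaque of the target flow box, on which the time projection $p$ is monotone.

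There is, however, a genuine gap in your proof of $\ShA(\Gamma^{+})\subset\DidAFlow{\infty}$: the surjectivity step, which you attribute to ``the relevant lemma of \cite{Maks:TA:2003}'' without locating it, can in fact fail on a non-compact $\Mman$. Take $\Mman=\RRR$, $\AFld=\partial_x$, and $\afunc(x)=\arctan x - x$; then $\AFld(\afunc)=\afunc'(x)=\tfrac{1}{1+x^{2}}-1>-1$, so $\afunc\in\Gamma^{+}$, yet $\ShA(\afunc)(x)=\arctan x$ has image $(-\tfrac{\pi}{2},\tfrac{\pi}{2})$ and is not a diffeomorphism of $\RRR$. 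Thus $\ShA(\Gamma^{+})\not\subset\DAFlow$ in general, and the second chain in \eqref{equ:inclusions_imSh_EA} and the equality $\ShA(\Gamma^{+})=\DidAFlow{k}$ cannot hold as written without an extra hypothesis (e.g.\ compactness of $\Mman$, or restricting to $\ShA(\Gamma^{+})\cap\DAFlow$). This is arguably a defect of the statement rather than of your argument, but your vague citation is precisely where the issue surfaces; you should either impose such a hypothesis or reformulate the conclusion as the one-sided inclusion $\DidAFlow{k}\subset\ShA(\Gamma^{+})$, which your argument does establish.
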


The identity $\imShA = \EidAFlow{k}$ means that for each $\dif\in\EAFlow$ being homotopic to $\id_{\Mman}$ \myemph{via a homotopy in $\EAFlow$ inducing a homotopy of $k$-jets} there exists a smooth function $\afunc_{\dif}:\Mman\to\RRR$ such that 
\begin{equation} \label{equ:h_F_x_ax}
\dif(x)=\AFlow(x,\afunc_{\dif}(x)),
\end{equation}
for all $x\in\Mman$.
Then the representation\;\eqref{equ:h_F_x_ax} can be regarded as \myemph{general formulas for elements of $\EidAFlow{r}$} mentioned above.

In general, every $\dif\in\EidAFlow{0}$ has a $\Cinf$ shift function $\lambda$ defined at least on $\Mman\setminus\FixF$.
This $\lambda$ can be uniquely reconstructed from a particular homotopy between $\dif$ and $\id_{\Mman}$ in $\EAFlow$, but it may depend on such a homotopy and does not always extend to a smooth function on all of $\Mman$, see\;\cite[Th.\;25]{Maks:TA:2003} and Theorem\;\ref{th:shift-function-for-deformations}.
In fact the following statement holds true: if $\FixF=\varnothing$, then $\imShA=\EidAFlow{0}$ and the map
$$\ShA:\Ci{\Mman}{\RRR} \ \longrightarrow \ \imShA = \EidAFlow{0} $$
satisfies a covering path axiom.

Suppose $\FixF\not=\varnothing$.
It is shown in\;\cite{Maks:TA:2003} (see Corollary\;\ref{cor:regext_lin_imSh_EidA0}) that if $\AFld$ is linearizable (or more generally a ``regular extension'' of a linear vector field) at each of its singular points, then $\imShA=\EidAFlow{0}$.
Moreover, in\;\cite{Maks:hamv2,Maks:CEJM:2009} there were given examples of vector fields $\AFld$ on $\RRR^2$ for which $\imShA=\EidAFlow{1}$ and that for some of them $\EidAFlow{1} \not= \EidAFlow{0}$, see Lemma\;\ref{lm:Ed_and_ESDL_for_RH}.
The results of the present paper arise from analysis of those examples.

\smallskip

In this paper we consider the problem of gluing local shift functions to a global one.
More precisely, let $\dif\in\EAFlow$ and suppose that for each $y\in\FixF$ there exists a neighbourhood $\Vman_y$ and a $\Cinf$ function $\afunc_y:\Vman_y\to\RRR$ such that\;\eqref{equ:h_F_x_ax} holds for all $x\in\Vman_y$.
\myemph{Does there exist $\afunc\in\Ci{\Mman}{\RRR}$ such that\;\eqref{equ:h_F_x_ax} holds on all of $\Mman$}, that is \myemph{whether $\dif\in\imShA$}?

Due to Lemma\;\ref{lm:imShA_EidAV} it is necessary that $\dif\in\EidAFlow{k}$ for some $k\geq0$.
Our main results (Theorems\;\ref{th:main_result} and \ref{th:suffcond_for_imShA_EidAk}) claim that if $k\geq1$ and some rather general assumptions on singular points of $\AFld$ hold true, then $\dif\in\imShA$.
Theorems\;\ref{th:main_result} is based on results of M.\;Newman,  A.\;Dress, D.\;Hoffman, and L.\;N.\;Mann\;\cite{Dress:Topol:1969} about lower bounds of diameters of $\ZZZ_{p}$-actions on topological manifolds.

In \S\ref{sect:examples} we present examples of vector fields satisfying assumptions of Theorems\;\ref{th:main_result}.
We also prove that such vector field are in some sense ``maximal with respect to reparametrizations'', see Theorem\;\ref{th:LT_RH}.

\begin{remark}\rm 
The idea of substituting a function into a flow-map instead of time is not new, see e.g.\;\cite{EHoph:1937, Chacon:JMM:1966, Totoki:MFCKUS:1966, Kowada:JMSJ:1972, Parry:JLMS:1972, Kochergin:IANSSSR:1973, KondratevSamovol:MZ:1973}, where reparametrizations of measure preserving flows and mixing properties of such flows are studied.
In particular, in those papers the values of such functions and orbit preserving maps on sets of measure zero were ignored, so they allowed to be even discontinuous.

There are also analogues of this approach for discrete dynamical systems on Cantor set, e.g.\;\cite{GiordanoPutnamSkau:JRAM:1995,GiordanoPutnamSkau:IJM:1999,BezuglyiMedynets:CM:2008}.

In the present paper we consider smooth orbit preserving maps and require smoothness of their shift functions.
This turned out to be useful for the study of homotopical properties of groups of orbit preserving diffeomorphisms for vector fields, and stabilizers and orbits of certain classes of smooth functions with respect to actions of diffeomorphism groups, see\;\cite{Maks:TA:2003,Maks:AGAG:2006,Maks:BSM:2006,Maks:loc-inv-shifts}.
The results of this paper will be used to extend\;\cite{Maks:AGAG:2006}.
\end{remark}

\begin{remark}\label{rem:Maks:TA:2003-gaps}\rm
As noted in the introduction, the paper\;\cite{Maks:TA:2003} contains some misprints and gaps.
However they do not impact on the results of the present paper.

1) The \myemph{proof} of \cite[Pr.\;10]{Maks:TA:2003}, used for \cite[Th.\;12]{Maks:TA:2003} (see Theorem\;\ref{th:ker2}), is incorrect: it was wrongly claimed that \myemph{the map $\Psi(x,*):\mathcal{I}\to GL_{n}(\RRR)$, associating to each $t$ the Jacobi matrix of the flow map $\Phi_t$ at $x$, is a homomorphism}.
Nevertheless the \myemph{statement} of \cite[Pr.\;10]{Maks:TA:2003} is true, and reparations and extensions of that proposition are given in\;\cite{Maks:period_functions}.
In particular, Theorem\;\ref{th:ker2} is valid. 

2) \cite[Defn.\;24]{Maks:TA:2003} should contain an assumption that $\amap_t$ is a local diffeomorphism at every $z\in\FixF\cap\Vman$ (i.e. belongs to $\EAV$).
This assumption was explicitly used in \cite[Th.\;27 \& Lm.\;31]{Maks:TA:2003} but it was not verified in \cite[Lm.\;29]{Maks:TA:2003} so the proof of that lemma requires simple additional arguments.
We provide them in 1) of Lemma\;\ref{lm:pmOmega_is_deformation}.

3) \cite[Eq.\;(10)]{Maks:TA:2003} is misprinted and must be read as follows:
$$\alpha(x) = p_1\circ f(x) - p_1\circ \Phi(x,a)+a.$$
See Eq.\;\eqref{equ:local_formulas_for_shift_func} for another variant of this formula.

4) Assumptions of \cite[Defn.\;15]{Maks:TA:2003} are not enough for the proof of \cite[Th.\;17]{Maks:TA:2003}.
In fact in\;\cite[Defn.\;15]{Maks:TA:2003} it should be additionally required (in the notations of that definition) that $\ShAV(\mathcal{M})$ is open in $\ShAV(\Ci{V}{\RRR})$ with respect to some topology $\Wr{r}$.
Such an assumption was explicitly used in\;\cite[Th.\;17]{Maks:TA:2003}.
This implies useless of \cite[Lm.\;28]{Maks:TA:2003} being the part (B) of \cite[Th.\;27]{Maks:TA:2003} concerning (S)-points and verifying \cite[Defn.\;15]{Maks:TA:2003} for linear vector fields, and also incorrectness of the part of \cite[Th.\;1]{Maks:TA:2003} claiming that the shift map $\ShA$ is either a homeomorphism or a covering map.

5) The ``division'' lemma\;\cite[Lm.\;32]{Maks:TA:2003} also is not true.
Let $\FFF$ be either the field $\RRR$ or $\CCC$, $\Vman$ be an open neighbourhood of the origin in $\FFF$, and $Z:\Ci{\Vman}{\FFF}\to\Ci{\Vman}{\FFF}$ be the multiplication by $z$ map, that is $Z(\afunc)(z) = z\afunc(z)$ for $\afunc\in\Ci{\Vman}{\FFF}$.
Evidently, $Z$ is an injective map.
It was \myemph{wrongly claimed} in\;\cite[Lm.\;32]{Maks:TA:2003} that the inverse of $Z$ is continuous between topologies $\Wr{r}$ for all $r$.
In fact, it follows from the Hadamard lemma, that in the case $\FFF=\RRR$ the map $Z^{-1}$ is continuous from $\Wr{r+1}$ topology to $\Wr{r}$ for all $r\geq0$, while in the case $\FFF=\CCC$ I can only prove that $Z^{-1}$ is continuous only between $\Wr{\infty}$ topologies.
This implies that the estimations of the continuity of the local inverses of shift maps are incorrect, see paragraph after\;\cite[Eq.\;(26)]{Maks:TA:2003}.

Incorrect statements mentioned in 4) and 5) are not used in the present paper.
Their reparation is given in\;\cite{Maks:loc-inv-shifts}.

All other results in \cite{Maks:TA:2003} are correct.
In particular, besides Theorem\;12 and Eq.\;(10) we will use only the following ``safe'' statements from\;\cite{Maks:TA:2003}: \cite[Lm.\;5\;\&\;7]{Maks:TA:2003} (see Lemma~\ref{lm:ker1}), \cite[Lm.\;20 \& Cor.\;21]{Maks:TA:2003} (see Lemma\;\ref{lm:sh_loc_diff}), \cite[Th.\;25]{Maks:TA:2003} (it will be extended in Theorem\;\ref{th:shift-function-for-deformations}), \cite[Lm.\;29]{Maks:TA:2003} (it will also be extended in Theorem\;\ref{lm:inher_ESDL} of the present paper), and \cite[Eqs.\;(23)-(26)]{Maks:TA:2003}, see\;\S\S\ref{sect:shift-func-linear-R1}-\ref{sect:shift-func-linear-rotation-R2}.
\end{remark}

\subsection{Structure of the paper}
In next section we recall the definition of Whitney topologies and introduce certain types of deformations.
Then in \S\ref{sect:shift-map} we define shift maps of a vector field for open subsets $\Vman\subset\Mman$ and recall their properties established in\;\cite{Maks:TA:2003, Maks:period_functions}.
In \S\ref{sect:shift-func-at-reg} we prove Theorem\;\ref{th:shift-function-for-deformations} about existence of shift functions on the set of regular points of a vector field.

Further in \S\ref{sect:main-results} we prove Theorem\;\ref{th:main_result} being the main result of the present paper and deduce from it certain sufficient conditions when $\ShA=\EidAFlow{k}$ for some $k\geq0$ (Theorem\;\ref{th:suffcond_for_imShA_EidAk}).
 
In \S\ref{sect:desreasing_V} we show that ``\myemph{the identity $\imShA=\EidAFlow{k}$ is preserved if we properly decrease our manifold}'', see Lemma\;\ref{lm:reduce_imShV_EidAVk}.
This statement will not be used but it illustrates local nature of assumptions of Theorem\;\ref{th:suffcond_for_imShA_EidAk}.

Further in \S\ref{sect:regular-extensions} we consider regular extensions of vector fields and prove a lemma which, in particular, accomplish some missed arguments of\;\cite[Lm.\;29]{Maks:TA:2003}, see 2) of Remark\;\ref{rem:Maks:TA:2003-gaps}.

\S\ref{sect:examples} describes examples of vector fields such that $\imShA=\EidAFlow{k}$ for some $k\geq0$.
In particular, we recall certain ``extension'' properties $\EP{\dd}$, $(\dd\geq0)$, for singular points of $\AFld$ introduced in\;\cite{Maks:TA:2003}.
Property $\EP{0}$ guarantees $\imShA=\EidAFlow{0}$.
On the other hand we also give examples of vector fields for which $\imShA=\EidAFlow{1}\not=\EidAFlow{0}$.
At the end of\;\S\ref{sect:examples} we formulate Theorem\;\ref{th:LT_RH} which describes a class of flows satisfying $\imShA=\EidAFlow{1}$.

Further in \S\ref{sect:ES_dk_properties} we introduce another properties $\ESDL{\ld}{\dd}{k}$ which guarantee $\imShA=\EidAFlow{k}$ and prove Theorem\;\ref{th:LT_RH}.

\section{Preliminaries}\label{sect:preliminaries}
\subsection{Real Jordan normal form of a matrix}
For $k\in\NNN$ let $E_k$ be the unit $(k\times k)$-matrix, $A$ be a square $(k\times k)$-matrix, and $a,b\in\RRR$.
Define the following matrices:
$$
\Jord_p(A) = 
\left(
\begin{smallmatrix}
A      & 0      & \cdots & 0      \\
E_k      & A      & \cdots & 0    \\
\cdots & \cdots & \cdots & \cdots \\
0      & \cdots & E_k      & A
\end{smallmatrix}
\right),
\quad
R(a,b) = 
\left(\begin{smallmatrix}
 a & b \\ - b & a.
\end{smallmatrix}\right),
\quad
\Jord_{p}(a\pm ib) =  \Jord_{p}(R(a,b)).
$$
For square matrices $A,B$ it is also convenient to put
$A\oplus B = \left(\begin{smallmatrix}  A & 0 \\ 0 & B \end{smallmatrix}\right)$.

Now let $C$ be an $(n\times n)$ matrix over $\RRR$.
Then by the real variant of Jordan's normal form theorem, e.g.\;\cite{Palis_deMelo}, $C$ is similar to a matrix of the following form:
$$
\mathop\oplus\limits_{\sigma=1}^{s} \Jord_{q_{\sigma}}(a_{\sigma} \pm i b_{\sigma}))
\ \ \oplus \ \ 
\mathop\oplus\limits_{\tau=1}^{r} \Jord_{p_{\tau}}(\lambda_\tau),
$$
where $a_{\sigma}\pm i b_{\sigma} \in\CCC$ and $\lambda_{\tau} \in\RRR$ are all the eigen values of $C$.

\subsection{Deformations}\label{sect:deformations}
Let $\CntParMan$ be a topological space and $\Vman,\SmParMan,\DestMan$ be smooth manifolds.
We introduce here special types\index{} of deformations which will be used throughout the paper.

Recall that for every $k=0,\ldots,\infty$ the space $\Cr{k}{\Vman}{\DestMan}$ can be endowed with the so-called \myemph{weak topology} which we will denote by $\Wr{k}$, see\;\cite{Hirsch:DiffTop} for details.
A topology $\Wr{0}$ coincides with the compact open one, \cite{Kuratowski:Top2:1968}, while topologies $\Wr{k}$, $(k\geq1)$, can be defined as follows.

Let $J^{k}(\Vman,\DestMan)$ be the manifold of $k$-jets of $\Cont{k}$ maps $\dif:\Vman\to\DestMan$.
Associating to every such $\dif$ its $k$-jet prolongation $j^{k}\dif:\Vman\to J^{k}(\Vman,\DestMan)$, we obtain a canonical embedding $\Cr{k}{\Vman}{\DestMan} \subset \Cr{k}{\Vman}{J^{r}(\Vman,\DestMan)}$.
Endow $\Cr{k}{\Vman}{J^{k}(\Vman,\DestMan)}$ with the topology $\Wr{0}$.
Then the induced topology on $\Cr{k}{\Vman}{\DestMan}$ is called $\Wr{k}$.
Finally, the topology $\Wr{\infty}$ is generated by all $\Wr{k}$ for $k<\infty$.

For a subset $\Subsp\subset\Cr{k}{\Vman}{\DestMan}$ denote by $\Cr{k}{\SmParMan}{\Subsp}$ the space of all $\Cont{k}$ maps $\Omega:\Vman\times\SmParMan\to\DestMan$ such that $\Omega_{\sx}=\Omega(\cdot,\sx):\Vman\to\DestMan$ belongs to $\Subsp$ for each $\sx\in\SmParMan$.
Thus $\Cr{k}{\SmParMan}{\Subsp} \subset \Cr{k}{\Vman\times\SmParMan}{\DestMan}$.

\begin{definition}\label{defn:deformation}
Let $\Subsp\subset\Ci{\Vman}{\DestMan}$ be a subset.
A continuous map
\begin{equation}\label{equ:deformation}
\Omega:\Vman\times\SmParMan\times\CntParMan\to\DestMan
\end{equation}
will be called an \myemph{$\dfrm{\SmParMan}{\CntParMan}{k}$-deformation in $\Subsp$} if
\begin{enumerate}
 \item[\rm(1)]
for every $(\sx,\cx)\in\SmParMan\times\CntParMan$ the map $\Omega_{(\sx,\cx)}=\Omega(\cdot,\sx,\cx):\Vman\to\DestMan$ belongs to $\Subsp$;
 \item[\rm(2)]
for every $\cx\in\CntParMan$ the map $\Omega_{\cx}=\Omega(\cdot,\cdot,\cx):\Vman\times\SmParMan\to\DestMan$ is $\Cinf$  and the induced mapping
$$
(\Vman\times\SmParMan)\times\CntParMan \to J^{k}(\Vman\times\SmParMan,\DestMan), \qquad (x,\sx,\cx) \mapsto j^{k}\Omega_{\cx}(x,\sx)
$$
is continuous.
\end{enumerate}
\end{definition}
Roughly speaking $\SmParMan$ is the space of $\Cinf$-parameters and $\CntParMan$ is the space of ``almost'' $\Cont{k}$-parameters of the deformation $\Omega$.
We will usually denote the ``space of parameters'' $\SmParMan\times\CntParMan$ by $\ParMan$.

Evidently, $\Omega$ can be regarded as the following map
$$
\omega:\CntParMan \to \Ci{\SmParMan}{\Subsp} \subset \Ci{\Vman\times\SmParMan}{\DestMan}, \quad
\omega(\cx)(\sx)(x)=\Omega(x,\sx,\cx).
$$

Let $\SmParMan=*$ be a point, so $\Omega:\Vman\times\CntParMan\to\DestMan$ and $\omega:\CntParMan\to\Subsp$.
Suppose also that $\CntParMan$ is locally compact.
Then it is well known, e.g.\;\cite{Kuratowski:Top2:1968}, that continuity of $\Omega$ is equivalent to continuity of $\omega$ into the topology $\Wr{0}$ of $\Subsp$.
Therefore it follows from the above description of $\Wr{k}$ that a $\dfrm{*}{\CntParMan}{k}$-deformation is the same that a continuous map $\CntParMan\to\Subsp$ into the topology $\Wr{k}$.

A $\dfrm{*}{\CntParMan}{k}$-deformation will also be called a \myemph{$\dfrmc{\CntParMan}{k}$-deformation}.
Moreover, if $\CntParMan=I$, then an $\dfrmc{I}{k}$-deformation $\Omega:\Vman\times I \to \DestMan$ will be called a \myemph{$k$-homotopy}.
It can be thought as a continuous path $I\to\Subsp$ into the topology $\Wr{k}$.
In particular, a $0$-homotopy is a usual homotopy.

If $\CntParMan=*$ is a point, then $\Omega:\Vman\times\SmParMan\to\DestMan$ is a $\Cinf$ map which, by definition, belongs to $\Ci{\SmParMan}{\Subsp}$.
In this case $k$ does not matter and we will call $\Omega$ an \myemph{$(\SmParMan,\Cinf)$-deformation}.

The following lemma is left for the reader.
\begin{lemma}\label{lm:change_deform}
Let $\Omega$ be an $\dfrm{\SmParMan'}{\CntParMan'}{k}$-deformation given by\;\eqref{equ:deformation}.
Let also $\SmParMan'$ and $\DestMan'$ be smooth manifolds, $\CntParMan'$ be another topological space, $\mu:\SmParMan'\to\SmParMan$ and $u:\DestMan\to\DestMan'$ be smooth maps, and $\rho:\CntParMan'\to\CntParMan$ be a continuous map.
Then the following map
$$
\Omega:\Vman\times\SmParMan'\times\CntParMan'\to\DestMan',
\qquad
\Omega(x,\sx,\cx) = u \circ \Omega(x,\mu(\sx), \rho(\cx))
$$
is an $\dfrm{\SmParMan'}{\CntParMan'}{k}$-deformation.
\hfill\qed
\end{lemma}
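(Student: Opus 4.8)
The plan is to verify the two conditions of Definition~\ref{defn:deformation} directly for the constructed map, which I write as $\Omega'$ to distinguish it from the given deformation $\Omega$; thus $\Omega'(x,\sx,\cx)=u\bigl(\Omega(x,\mu(\sx),\rho(\cx))\bigr)$, where $\Omega$ is an $\dfrm{\SmParMan}{\CntParMan}{k}$-deformation in some subset $\Subsp\subset\Ci{\Vman}{\DestMan}$ as in~\eqref{equ:deformation}. The guiding point is that $\mu$, being $\Cinf$, may be fed into the $\Cinf$-parameter slot of a deformation, while $\rho$, being merely continuous, fits the ``almost-$\Cont{k}$'' parameter slot — so the factorisation $\Omega'=u\circ\Omega\circ\theta$, with $\theta(x,\sx,\cx)=(x,\mu(\sx),\rho(\cx))$, respects the two roles in Definition~\ref{defn:deformation}. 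Since $\mu$ and $\rho$ are continuous, $\theta$ is continuous, hence $\Omega'$ is continuous as a composite of continuous maps. Observe also that for each fixed $\cx$ the ``slice'' $\iota_\mu:\Vman\times\SmParMan'\to\Vman\times\SmParMan$, $\iota_\mu(x,\sx)=(x,\mu(\sx))$, is $\Cinf$.

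Condition~(1) is immediate: for fixed $(\sx,\cx)\in\SmParMan'\times\CntParMan'$ we have $\Omega'_{(\sx,\cx)}=u\circ\Omega_{(\mu(\sx),\rho(\cx))}$, and since $\Omega_{(\mu(\sx),\rho(\cx))}\in\Subsp$ and post-composition with the $\Cinf$ map $u$ preserves smoothness, $\Omega'_{(\sx,\cx)}$ lies in $u(\Subsp):=\{u\circ f\;:\;f\in\Subsp\}\subset\Ci{\Vman}{\DestMan'}$, which is the subset in which $\Omega'$ is declared to be a deformation. For the first half of condition~(2), fix $\cx\in\CntParMan'$; then $\Omega'_\cx=u\circ\Omega_{\rho(\cx)}\circ\iota_\mu$ is a composite of the $\Cinf$ maps $\iota_\mu$, $\Omega_{\rho(\cx)}$ (the latter being $\Cinf$ by condition~(2) for $\Omega$) and $u$, hence $\Cinf$.

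The one point that needs slight care is the jet-continuity half of condition~(2): that $(x,\sx,\cx)\mapsto j^k\Omega'_\cx(x,\sx)$ is continuous into $J^k(\Vman\times\SmParMan',\DestMan')$. Here I would invoke the chain rule for $k$-jets, i.e. the fact that the jet of a composition is obtained by applying the jet-composition operations, which are smooth (polynomial in the jet coordinates) and in particular continuous. Writing $\Omega'_\cx=u\circ\Omega_{\rho(\cx)}\circ\iota_\mu$, the value $j^k\Omega'_\cx(x,\sx)$ is the jet-composite of $j^k\iota_\mu(x,\sx)$, of $j^k\Omega_{\rho(\cx)}$ at $\iota_\mu(x,\sx)=(x,\mu(\sx))$, and of $j^k u$ at the point $\Omega_{\rho(\cx)}(x,\mu(\sx))$. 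The first depends $\Cinf$-smoothly, hence continuously, on $(x,\sx)$; the second depends continuously on $(x,\sx,\cx)$, because $(x,\sx,\cx)\mapsto(x,\mu(\sx),\rho(\cx))$ is continuous and, by condition~(2) for $\Omega$, $(x',\sx',\cx')\mapsto j^k\Omega_{\cx'}(x',\sx')$ is continuous; the third depends continuously on its base point $\Omega_{\rho(\cx)}(x,\mu(\sx))$, which depends continuously on $(x,\sx,\cx)$ since $u$ is $\Cinf$ and $\Omega'$ is continuous. Feeding these three continuous families through the continuous jet-composition operation yields the required continuity, and completes the verification. The argument is routine bookkeeping with the definition, and I do not expect any genuine obstacle beyond keeping the chain rule for jets straight.
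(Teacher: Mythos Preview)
Your verification is correct and is exactly the routine check the paper has in mind: the lemma is stated with a \qed{} and the paper explicitly says ``the following lemma is left for the reader,'' so there is no proof to compare against beyond the direct unpacking of Definition~\ref{defn:deformation} that you carry out. Your use of the chain rule for $k$-jets (continuity of jet composition) is the natural way to handle the second half of condition~(2), and the decomposition $\Omega'_\cx=u\circ\Omega_{\rho(\cx)}\circ\iota_\mu$ is the right organizing device.
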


\section{Shift map}\label{sect:shift-map}
\subsection{Local flows}
Let $\AFld$ be a vector field on $\Mman$ tangent to $\partial\Mman$.
Then the \myemph{orbit} of a point $x\in\Mman$ is a unique map $\AFlow_x: \RRR\supset(a_x,b_x) \to \Mman$ such that $\AFlow_x(0)=x$ and $\frac{d}{dt}\AFlow_x = \AFld(\AFlow_x)$, where $(a_x,b_x) \subset\RRR$ is the maximal interval on which a map with the previous two properties can be defined.
If $x$ is either zero or periodic point for $\AFld$, then $(a_x,b_x)=\RRR$.

By standard theorems in ODE the following subset of $\Mman\times\RRR$ 
$$
\domA = \mathop\cup\limits_{x\in\Mman} x \times (a_x, b_x),
$$
is an open, connected neighbourhood of $\Mman\times0$ in $\Mman\times\RRR$.

Then the \myemph{local flow\/} of $\AFld$ is the following map, being in fact $\Cinf$,
$$\AFlow: \Mman\times\RRR \; \supset \;\domA\longrightarrow\Mman,
\qquad
\AFlow(x,t) = \AFlow_x(t).
$$
If $\AFld$ has compact support, then $\domA=\Mman\times\RRR$, so $\AFlow$ is a \myemph{global} flow, e.g.\;\cite{Palis_deMelo}.

It is well-known that if $\AFlow$ is not global, then we can find a smooth strictly positive function $\mu:\Mman\to(0,+\infty)$ such that the flow $\BFlow$ of $\BFld=\mu\AFld$ is global, e.g.\;\cite[Cor.\;2]{Hart:Top:1983}.

\subsection{Shift map}
For open $\Vman\subset\Mman$ let $\funcAV$ be the subset of $C^{\infty}(\Vman,\RRR)$ consisting of functions $\afunc$ whose graph $\{(x,\afunc(x)) \, | \, x\in\Vman\}$ is contained in $\domA$.
It $\AFld$ either has no non-periodic orbits, or generates a global flow, then $\funcAV=\Ci{\Vman}{\RRR}$.

Then we can define the following map 
$$\ShAV: C^{\infty}(\Vman,\RRR) \;\; \supset  \;\; \funcAV \;\longrightarrow\; C^{\infty}(\Vman,\Mman)$$ 
by $\ShAV(\afunc)(x) = \AFlow(x,\afunc(x))$.
We will call $\ShAV$ the \myemph{shift map along the orbits of $\AFld$} and denote its image in $\Ci{\Mman}{\Mman}$ by $\imShAV$.

Denote by $\EAV$ the subset of $\Ci{\Vman}{\Mman}$ consisting of all mappings $\dif:\Vman\to\Mman$ such that 
\begin{enumerate}
\item
$\dif(\orb\cap\Vman) \subset\orb$ for every orbit $\orb$ of $\AFld$, and
 \item 
$\dif$ is a local diffeomorphism at every singular point $z\in\FixF\cap\Vman$.
\end{enumerate}
Let also $\EidAV{k}$, ($0\leq k\leq \infty$), be the path-component of the identity inclusion  $i_{\Vman}:\Vman\subset\Mman$ in $\EAV$  with respect to the topology $\Wr{k}$.
It consists of all $\dif\in\EAV$ which are $k$-homotopic to $i_{\Vman}$ in $\EAV$.
If $\Vman=\Mman$, then we will omit $\Vman$ and simply write $\EAFlow\defeq\EAM$, $\funcAM\defeq\funcA$, $\imShA\defeq\imShAM$, and so on.

It can be shown similarly to Lemma\;\ref{lm:imShA_EidAV} that 
$$
\imShAV \subset \EidAV{\infty}  \subset \cdots \subset \EidAV{1} \subset  \EidAV{0}.
$$
We study the problem whether $\imShAV=\EidAV{k}$ for some $k$.

Lemma\;\ref{lm:Sh-EA-under-reparametrization} below implies that $\EAV$ and $\imShAV$ do not change under reparametrizations, that is when we replace $\AFld$ with $\mu\AFld$ for some strictly positive, $\Cinf$ function $\mu:\Mman\to(0,+\infty)$.
Therefore we can always assume that $\AFld$ generates a global flow.
This simplifies many arguments.

\subsection{The kernel of shift map}
The following subset\footnote{In\;\cite{Maks:TA:2003} $\ZidAV$ was denoted by $Z_{\id}$.} $$\ZidAV\defeq\ShAV^{-1}(i_{\Vman})$$ of $\Ci{\Vman}{\RRR}$ will be called the \myemph{kernel} of $\ShAV$.
Thus $\ZidAV$ consists of all $\mu\in\Ci{\Vman}{\RRR}$ such that $\AFlow(x,\mu(x))\equiv x$ for every $x\in\Vman$.

\begin{lemma}{\rm\cite[Lm.\;5\;\&\;7]{Maks:TA:2003}}\label{lm:ker1}
Let $\afunc,\bfunc\in\funcAV$.
Then $\ShAV(\afunc)=\ShAV(\bfunc)$ iff $\afunc-\bfunc\in\ZidAV$.
Hence if $\funcA=\Ci{\Vman}{\RRR}$, then $\ZidAV$ is a group with respect to the point-wise addition and $\ShAV$ yields a bijection between the factor group $\Ci{\Vman}{\RRR}/\ZidAV$ and the image $\imShAV$.

Every $\theta\in\ZidAV$ is locally constant on orbits of $\AFld$.
If $x\in\Vman$ is non-periodic, then $\theta(x)=0$.
If $x$ is periodic, then $\theta(x) =n\Per(x)$  for some $n\in\ZZZ$.
\end{lemma}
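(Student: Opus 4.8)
The statement to prove is Lemma~\ref{lm:ker1}, which has two parts: the characterization of the fibers of $\ShAV$ (and the group structure of $\ZidAV$), and the description of elements of $\ZidAV$ as locally-constant-period-multiples.

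\textbf{Plan.} The key observation is that the flow satisfies the one-parameter group identity $\AFlow(\AFlow(x,s),t) = \AFlow(x,s+t)$ wherever both sides are defined. From this, for $\afunc,\bfunc\in\funcAV$ one computes, for each $x\in\Vman$,
\begin{equation*}
\ShAV(\afunc)(x) = \AFlow(x,\afunc(x)) = \AFlow(x,\bfunc(x))
\end{equation*}
if and only if applying the time-$(-\bfunc(x))$ map (which exists since $x$ lies on the orbit and the relevant times are in the orbit's domain) gives $\AFlow(x, \afunc(x)-\bfunc(x)) = x$, i.e. $\afunc-\bfunc\in\ZidAV$. One must be a little careful that $\afunc(x)-\bfunc(x)$ indeed lies in the maximal interval $(a_x,b_x)$; this follows because both $\AFlow(x,\afunc(x))$ and $\AFlow(x,\bfunc(x))$ are defined and equal, so the orbit through $x$ returns to $x$, forcing the difference time to be in the domain. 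Then, assuming $\funcAV = \Ci{\Vman}{\RRR}$, the fiber characterization immediately gives that $\ZidAV$ is closed under addition and negation (it is the preimage of a point under a map whose fibers are cosets), hence a group, and $\ShAV$ descends to a bijection $\Ci{\Vman}{\RRR}/\ZidAV \to \imShAV$.

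\textbf{Second part.} For $\theta\in\ZidAV$, fix $x\in\Vman$. Since $\AFlow(x,\theta(x)) = x$, either $x$ is a fixed point (then $\theta(x)$ can be arbitrary, but $0$ is allowed — actually one shows $\theta$ is locally constant, see below), or $x$ is periodic with period $\Per(x)$ and $\theta(x) = n\Per(x)$ for some $n\in\ZZZ$, or $x$ is non-periodic and then the only time returning $x$ to itself is $0$, so $\theta(x)=0$. Local constancy on orbits: differentiate the identity $\AFlow(x,\theta(x))=x$ — or more simply, use that $\theta$ is continuous and $\ZZZ\cdot\Per$ (resp. $\{0\}$) is discrete on the set of periodic (resp. non-periodic) points, together with the fact that the period function $\Per$ is locally constant on the set of periodic points that are non-singular (a standard fact, or one can argue directly on a flow box). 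Near a non-periodic point, a flow box shows $\theta\equiv 0$ locally.

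\textbf{Main obstacle.} The genuinely delicate point is the last one: showing $\theta$ is \emph{locally constant on orbits} near periodic points and near singular points, which requires knowing that the period does not oscillate and that $\theta$ cannot jump between different integer multiples $n\Per(x)$ — this uses continuity of $\theta$ plus (semi)continuity/local constancy of $\Per$, and near a fixed point one needs that nearby periodic orbits have periods bounded away from $0$ or that $\theta$ is forced to vanish on a neighborhood. Since this lemma is quoted from \cite[Lm.\;5\;\&\;7]{Maks:TA:2003}, I would at this point simply invoke that reference for the detailed verification of the local-constancy claims, having reduced everything else to the one-parameter-group identity and an elementary flow-box argument. The algebraic part (group structure, bijection onto the image) is then purely formal once the fiber description is in hand.
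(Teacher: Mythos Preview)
The paper does not prove this lemma at all; it is simply quoted from \cite[Lm.\;5\;\&\;7]{Maks:TA:2003} without argument, so there is no ``paper's own proof'' to compare your attempt against. Your decision in the final paragraph to ultimately defer to that reference is therefore exactly what the paper itself does.

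That said, your sketch contains one genuine slip worth flagging. In the second part you assert that ``the period function $\Per$ is locally constant on the set of periodic points that are non-singular (a standard fact\ldots)''. This is false in general: near a nondegenerate centre, or in Example\;\ref{exmp:top_sing} of the paper, nearby closed orbits have continuously varying (and even unbounded) periods. Fortunately the lemma does not assert that $\theta$ is locally constant on $\Vman$; it asserts that $\theta$ is locally constant \emph{on orbits}, i.e.\ the restriction of $\theta$ to each single orbit $\orb\cap\Vman$ is locally constant. Since all points of a fixed periodic orbit share the same period, the ratio $\theta(x)/\Per(x)$ is a continuous $\ZZZ$-valued function along $\orb\cap\Vman$ and hence locally constant there; on a non-periodic orbit $\theta\equiv 0$; on a fixed point the orbit is a singleton. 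So the ``main obstacle'' you describe --- worrying about $\theta$ jumping between multiples as one moves \emph{transversally} between orbits, or about behaviour of $\Per$ near singular points --- is not actually present in the statement, and the argument is shorter than you feared.

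The algebraic part (fiber description, group structure, induced bijection) is handled correctly in your proposal.
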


\begin{theorem}[Description of $\ZidAV$]\label{th:ker2}
{\rm\cite[Th.\;12]{Maks:TA:2003}, \cite{Maks:period_functions}}
Let $\Vman \subset\Mman$ be a \myemph{connected}, open subset.
If $\Int{\FixF}\cap\Vman\not=\varnothing$, then 
$$\ZidAV=\{\mu\in\funcAV \ : \ \mu|_{\Vman\setminus\Int{\FixF}}=0\}.$$
Suppose $\FixF\cap\Vman$ is nowhere dense in $\Vman$.
Then one of the following possibilities for $\ZidAV$ is realized:

\myemph{Non-periodic case:} $\ZidAV=\{0\}$, so $\ShAV$ is injective.

\myemph{Periodic case:} $\ZidAV=\{n\,\theta\}_{n\in\ZZZ}$ for some $\theta\in\Ci{\Vman}{\RRR}$ called the \myemph{positive generator} of $\ZidAV$ and having the following properties:
\begin{enumerate}
\item $\theta>0$ on all of $\Vman$, so $\Vman\setminus\FixF$ consists of periodic points only, and therefore $\funcAV=\Ci{\Vman}{\RRR}$;
\item there exists an open and everywhere dense subset $\Qman\subset\Vman$ such that $\theta(x)=\Per(x)$ for all $x\in\Qman$;
\item for every orbit $\orb$ of $\AFld$ the restriction $\theta|_{\orb\cap\Vman}$ is constant;
\item $\theta$ extends to a $\Cinf$ function on the $\AFld$-invariant set $\Uman=\AFlow(\Vman\times\RRR)$ and the vector field $\BFld=\theta\AFld$ generates a circle action on $\Uman$:
$$\BFlow:\Uman\times S^1\to\Uman, \qquad \BFlow(x,t) = \AFlow(x,\theta(x)\cdot t),$$ where $x\in\Uman$, $t\in S^1=\RRR/\ZZZ$.
\end{enumerate}
\end{theorem}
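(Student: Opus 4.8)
The plan is to deduce the statement from Lemma~\ref{lm:ker1}, which already tells us that every $\mu\in\ZidAV$ is $\Cinf$ on $\Vman$, locally constant along the orbits of $\AFld$, vanishes at every non-periodic point, and equals $n\,\Per(x)$ at a periodic point $x$ for some integer $n=n(x)$. Since the closed-in-$\Vman$ set $\FixF\cap\Vman$ has nonempty interior in $\Vman$ exactly when $\Int\FixF\cap\Vman\ne\varnothing$, the two cases of the statement are complementary and I would treat them in turn.

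\emph{The case $\Int\FixF\cap\Vman\ne\varnothing$.} The inclusion ``$\supseteq$'' is immediate: if $\mu\in\funcAV$ vanishes on $\Vman\setminus\Int\FixF$, then $\AFlow(x,\mu(x))=\AFlow(x,0)=x$ off $\Int\FixF$, while $\AFlow(x,t)=x$ for all $t$ on $\Int\FixF\subset\FixF$; hence $\mu\in\ZidAV$. For ``$\subseteq$'' I would show that $\mu$ vanishes on $\Vman\setminus\FixF$: at non-periodic points this is Lemma~\ref{lm:ker1}, and at a periodic point $x$ one has $\mu(x)=n(x)\Per(x)$, so it remains to rule out $n(x)\ne0$. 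The mechanism is that periods stay bounded below near regular points --- in a flow box around a non-singular point the local flow is a translation and cannot return a nearby point to itself in arbitrarily small time --- so that, using connectedness of $\Vman$ together with the open fixed region $\Int\FixF\cap\Vman$, a nonzero $n$ on a periodic component $C$ of $\Vman\setminus\FixF$ would force the periods to blow up along part of $\partial C$, making a nonzero multiple of $\Per$ impossible to extend to a finite-valued $\Cinf$ function on $\Vman$. Finally $\partial\FixF=\FixF\setminus\Int\FixF$ is closed and nowhere dense, so $\mu$ also vanishes on $(\Vman\setminus\Int\FixF)\cap\FixF$ by continuity; since $\mu\in\funcAV$, this gives ``$\subseteq$''. (In full this amounts to \cite[Th.\;12]{Maks:TA:2003}.)

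\emph{The case $\FixF\cap\Vman$ nowhere dense.} Here I would first show that $\ZidAV$ is trivial or infinite cyclic: a nonzero $\mu\in\ZidAV$ vanishes off the periodic points and equals a nonzero integer multiple of $\Per$ there, and comparing two such elements on the open set where neither vanishes, plus continuity and connectedness, forces them to be commensurable; hence $\ZidAV$ is cyclic. If $\ZidAV=\{0\}$ we are in the non-periodic case, and $\ShAV$ is injective by Lemma~\ref{lm:ker1}. Otherwise, let $\theta$ be the positive generator; as an element of $\ZidAV\subset\funcAV$ it is automatically $\Cinf$. I would then verify the four properties: (1) $\theta>0$ everywhere --- at a regular non-periodic point $\theta$ would vanish by Lemma~\ref{lm:ker1}, contradicting the flow-box lower bound on periods, so $\Vman\setminus\FixF$ consists only of periodic points and hence $\funcAV=\Ci{\Vman}{\RRR}$, while positivity at the singular points comes from the local normal form of $\AFld$ there; (2) $\theta=\Per$ on the open dense set $\Qman$ of periodic points of locally constant period; (3) $\theta$ is constant on each orbit, by Lemma~\ref{lm:ker1}; (4) being constant on orbits, $\theta$ extends along the orbits to a function on the saturation $\Uman=\AFlow(\Vman\times\RRR)$, which turns out to be $\Cinf$, and then $\BFlow(x,t):=\AFlow(x,\theta(x)t)$ is a well-defined smooth flow with $\BFlow(x,t+1)=\BFlow(x,t)$, i.e. a circle action with generator $\BFld=\theta\AFld$. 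Conversely, any $\eta\in\ZidAV$ satisfies $\eta/\theta\in\ZZZ$ on $\Qman$, so $\eta=n\theta$ for a fixed $n\in\ZZZ$ by continuity and connectedness, and therefore $\ZidAV=\{n\theta\}_{n\in\ZZZ}$.

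The soft parts --- the case split, the cyclicity of $\ZidAV$, the flow-box lower bound on periods near regular points, and the descent along orbits --- are routine. The real obstacle is the fine behaviour of the period function near the singular set: showing that in the periodic case the period extends across $\FixF$ to a strictly positive $\Cinf$ function (the heart of (1) at singular points and of (4)), and dually that in the first case no nonzero multiple of $\Per$ extends across $\FixF$ when $\Int\FixF\cap\Vman\ne\varnothing$. For this I would invoke the analysis of \cite{Maks:period_functions}, which repairs and extends \cite[Pr.\;10]{Maks:TA:2003}, rather than reprove it.
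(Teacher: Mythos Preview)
The paper does not give its own proof of this theorem: it is stated with attributions to \cite[Th.\;12]{Maks:TA:2003} and \cite{Maks:period_functions}, and Remark~\ref{rem:Maks:TA:2003-gaps} explains that the original argument in \cite{Maks:TA:2003} had a gap (in \cite[Pr.\;10]{Maks:TA:2003}) which is repaired in \cite{Maks:period_functions}. So there is nothing in the present paper to compare your argument against line by line.

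That said, your outline is consistent with how the cited proofs proceed: you correctly isolate Lemma~\ref{lm:ker1} as the source of the pointwise structure of elements of $\ZidAV$, you correctly use the flow-box lower bound on periods near regular points, and you correctly flag the genuine difficulty---the smooth extension and positivity of the period function across $\FixF\cap\Vman$---as the content of \cite{Maks:period_functions}. One place where your sketch is a bit loose is the first case: your mechanism ``periods blow up along part of $\partial C$'' is not quite the right picture in general, and the actual argument for why $\mu$ vanishes on $\Vman\setminus\Int\FixF$ when $\Int\FixF\cap\Vman\ne\varnothing$ is more delicate than a blow-up heuristic suggests. Since you defer this to \cite[Th.\;12]{Maks:TA:2003} anyway, this does not affect the overall validity of your proposal; just be aware that this step is not as soft as your wording implies.
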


\subsection{Reparametrization of time}
The following lemma describes the behavior of $\imShA$ and $\EAFlow$ under reparametrizations of time.

\begin{lemma}
\label{lm:Sh-EA-under-reparametrization}{\rm c.f.\cite{Maks:reparam-sh-map}}
Let $\mu:\Mman\to\RRR$ be a $\Cinf$ function.
Consider the vector field $\BFld=\mu\AFld$.
Denote by $\FixA$ and $\FixB$ the sets of singular points of $\AFld$ and $\BFld$ respectively.
Then the following statements hold true.

{\rm1)} $\EBFlow\subset\EAFlow$, and $\EBFlow=\EAFlow$ iff $\mu\not=0$ on $\Mman\setminus\FixA$.

{\rm2)}
$\imSh{\BFlow} \subset \imSh{\AFlow}$, and $\imSh{\BFlow}=\imSh{\AFlow}$ iff $\mu\not=0$ on $\Mman\setminus\Int{\FixA}$.
\end{lemma}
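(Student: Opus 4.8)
\textbf{Proof proposal for Lemma \ref{lm:Sh-EA-under-reparametrization}.}

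The plan is to prove both statements by exploiting the fact that $\AFld$ and $\BFld=\mu\AFld$ have, up to the zero set of $\mu$, the same orbits with the same orientation, and then tracking carefully what happens over the set $\{\mu=0\}$ where the flow of $\BFld$ freezes. First I would set up the comparison of orbits: away from $Z:=\mu^{-1}(0)$, the field $\BFld$ is a strictly nonzero rescaling of $\AFld$ (positive or negative on each component of $\Mman\setminus Z$), so on $\Mman\setminus Z$ every orbit of $\BFld$ is contained in an orbit of $\AFld$, and conversely a regular orbit of $\AFld$ contained in $\Mman\setminus Z$ is an orbit of $\BFld$ (possibly traversed in the opposite time direction, which does not matter for orbit-preservation). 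Over $Z$ the situation is: $\FixB = \FixA \cup Z$, since $\BFld(z)=\mu(z)\AFld(z)$ vanishes exactly when $\mu(z)=0$ or $\AFld(z)=0$; and the $\BFld$-orbit of any point of $Z$ is that point itself.

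For statement (1): if $\dif\in\EBFlow$ then $\dif$ sends each $\BFld$-orbit into itself and is a local diffeomorphism at each point of $\FixB\supset\FixA$; I must check $\dif$ sends each $\AFld$-orbit into itself. An $\AFld$-orbit $\orb$ decomposes into $\BFld$-orbits (the points of $\orb\cap Z$ are fixed $\BFld$-orbits, and the open arcs of $\orb\setminus Z$ are $\BFld$-orbits, modulo orientation), all lying in $\orb$; since $\dif$ preserves each piece, $\dif(\orb)\subset\orb$. Together with $\dif$ being a local diffeomorphism at each $z\in\FixA\subset\FixB$, this gives $\dif\in\EAFlow$, so $\EBFlow\subset\EAFlow$ always. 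For the equality: if $\mu\neq0$ on $\Mman\setminus\FixA$, then $\FixB=\FixA$ and every $\AFld$-orbit is a $\BFld$-orbit (same orbits, possibly reparametrized and reoriented), hence the two endomorphism semigroups literally coincide. Conversely, if $\mu(z_0)=0$ for some non-singular $z_0$ of $\AFld$, I would produce $\dif\in\EAFlow\setminus\EBFlow$: take a shift $\dif=\ShA(\afunc)$ for a small bump function $\afunc$ supported near $z_0$ with $\dif(z_0)\neq z_0$; then $\dif$ preserves $\AFld$-orbits but moves the point $z_0$, which is a $\BFld$-fixed point (indeed a whole $\BFld$-orbit equal to $\{z_0\}$), so $\dif\notin\EBFlow$. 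One must ensure $\afunc\in\funcB$ and that $\dif$ is genuinely in $\EAFlow$; choosing $\afunc$ small and $\AFld$-generically supported handles this.

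For statement (2): $\imShB\subset\imShA$ because $\BFlow(x,t)=\AFlow(x,\tilde\afunc(x,t))$ for a suitable smooth $\tilde\afunc$ obtained by integrating $1/\mu$ along orbits where $\mu\neq0$ and extending by continuity; more directly, for $\bfunc\in\funcB$ the map $\ShB(\bfunc)$ preserves $\AFld$-orbits and moves each point within its $\AFld$-orbit by a smooth amount, which is exactly the requirement to lie in $\imShA$ — here I would invoke that on $\Mman\setminus\Int\FixA$ points have locally injective (outside periodic points) or controlled orbit maps, so the $\AFld$-time displacement is a well-defined smooth function; over $\Int\FixA$ the displacement is forced to be read off from $\ZidA$-type freedom, and Theorem \ref{th:ker2} describes exactly this. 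For the equality criterion: if $\mu\neq0$ on $\Mman\setminus\Int\FixA$, then $Z\subset\Int\FixA$, so $\ShB$ and $\ShA$ have the same local behavior off $\Int\FixA$ and the same (trivial on the non-fixed part) kernel structure over $\Int\FixA$ by Theorem \ref{th:ker2}, giving $\imShB=\imShA$. Conversely if $\mu$ vanishes at a point $z_0\notin\Int\FixA$, I again build an element of $\imShA$ not in $\imShB$: a shift along $\AFld$ that displaces $z_0$, which no $\ShB(\bfunc)$ can do since $\BFlow(z_0,t)=z_0$ for all $t$.

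\textbf{Main obstacle.} The delicate point is the converse direction of (2), i.e. producing an element of $\imShA\setminus\imShB$ when $\mu$ has a zero $z_0$ outside $\Int\FixA$ but possibly on the frontier of $\FixA$ or at an accumulation of fixed points: one needs a shift function $\afunc$ for $\AFld$ that is genuinely realizable (lies in $\funcA$, stays in the domain of the flow) and strictly moves $z_0$, while no $\BFld$-shift can, and one must be careful that $z_0$ might be a limit of singular points of $\AFld$ so the "bump near $z_0$" must be compatible with $\dif$ being a local diffeomorphism at nearby singular points — this is handled by taking the bump supported in a small regular neighborhood of $z_0$ away from $\FixA$, using that $z_0\notin\Int\FixA$ only guarantees $z_0$ is not interior to $\FixA$, so either $z_0\notin\FixA$ (easy) or $z_0\in\Fr\FixA$, in which case regular points accumulate at $z_0$ and one localizes there. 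The bookkeeping of orientations (the $\BFld$-flow may run backwards relative to $\AFld$) is routine but must be stated, since "orbit" ignores orientation while "$\imSh$" does not.
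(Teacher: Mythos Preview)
Your treatment of statement (1) is correct and essentially matches the paper. The issues are all in (2).

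\textbf{The inclusion $\imShB\subset\imShA$.} Your reparametrization formula is wrong: one does not integrate $1/\mu$ and then worry about extending across $Z$. The standard identity is
\[
\BFlow(x,s)=\AFlow\bigl(x,\gamma(x,s)\bigr),\qquad \gamma(x,s)=\int_0^{s}\mu\bigl(\BFlow(x,t)\bigr)\,dt,
\]
which is globally $\Cinf$ with no extension needed. From it one gets $\ShB(\afunc)=\ShA\bigl(\gamma(\cdot,\afunc(\cdot))\bigr)$ immediately, so $\imShB\subset\imShA$. Your ``more directly'' paragraph is not an argument: saying the displacement along the $\AFld$-orbit is ``smooth'' is precisely the statement to be proved. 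For the equality when $\mu\neq0$ on $\Mman\setminus\Int\FixA$, the paper simply replaces $\mu$ by a strictly positive $\nu$ agreeing with $\mu$ near $\Mman\setminus\Int\FixA$; then $\BFld=\nu\AFld$, $\AFld=\tfrac{1}{\nu}\BFld$, and the inclusion runs the other way.

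\textbf{The converse of (2) when $z_0\in\Fr\FixA$.} Here your plan has a genuine gap. If $z_0\in\FixA$, then every $\AFld$-shift also fixes $z_0$, so ``a shift along $\AFld$ that displaces $z_0$'' does not exist. Your fallback, ``regular points accumulate at $z_0$ and one localizes there'', does not work either: at a nearby regular point $z_1$ you have no control over $\mu(z_1)$, and if $\mu(z_1)\neq0$ then $z_1$ is regular for $\BFld$ too, so a $\BFld$-shift can match any displacement of $z_1$. (Concretely: take $\AFld(x,y)=(x,y)$ on $\RRR^2$ and $\mu(x,y)=x^2+y^2$; then $z_0=0$ is the only zero of $\mu$, it lies in $\Fr\FixA$, and every nearby regular point is regular for $\BFld$ as well.)

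The paper's argument avoids looking at the \emph{value} $g(z_0)$ and instead compares \emph{shift functions} via the kernel $\ZidA$ of Theorem~\ref{th:ker2}. Choose $\bfunc$ with $\bfunc(z_0)\neq0$ (and $0<\bfunc(z_0)<\Per(z_0)$ if $z_0$ is periodic) and set $g=\ShA(\bfunc)$. If also $g=\ShB(\afunc)$, the reparametrization formula gives $g=\ShA(\bfunc')$ with $\bfunc'(y)=\gamma(y,\afunc(y))$; since $z_0\in\FixB$ one has $\gamma(z_0,s)\equiv0$, hence $\bfunc'(z_0)=0$. Thus $\bfunc-\bfunc'\in\ZidA$ but $(\bfunc-\bfunc')(z_0)=\bfunc(z_0)\neq0$. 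Now Theorem~\ref{th:ker2} forces, in each of its three cases, every element of $\ZidA$ to vanish at $z_0\in\Mman\setminus\Int\FixA$ (or to equal $n\theta$ with the chosen bound on $\bfunc(z_0)$ ruling out $n\neq0$), a contradiction. This is the missing idea; your displacement picture cannot replace it.

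Finally, your worry about orientation is unnecessary: shift functions may be negative, so $\imSh{-\AFlow}=\imSh{\AFlow}$ and sign of $\mu$ plays no role in either statement.
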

\begin{proof}
1) Evidently, every orbit of $\BFld$ is contained in some orbit of $\AFld$, whence $\EBFlow\subset\EAFlow$.
Suppose $\mu\not=0$ on $\Mman\setminus\FixA$.
Then the foliations by orbits of $\AFld$ and $\BFld$ coincide, whence $\EBFlow=\EAFlow$.
Conversely, if $\mu(x)=0$ for some $x\in\Mman\setminus\FixA$, then $x$ is a fixed point for $\BFlow$, so $\dif(x)=x$ for every $\dif\in\EBFlow$, while $\EAFlow$ contains maps $g$ such that $g(x)\not=x$.
Hence $\EBFlow\subsetneq\EAFlow$.

2) Define the following function $\afunc:\domB\to\RRR$ by
\begin{equation}\label{equ:reparam-function}
\gamma(x,s) = \int_{0}^{s}\mu(\BFlow(x,t))dt.
\end{equation}
Then it is well known that $\BFlow(x,t)=\AFlow(x,\gamma(x,t))$, e.g.\;\cite[Prop.\;1.28]{ChiconeODE:2006}.
It follows that 
\begin{equation}\label{equ:G_xax__F_x_gxax}
\BFlow(x,\afunc(x))=\AFlow\bigl(x,\gamma(x,\afunc(x))\bigr), \qquad \afunc\in\funcB.
\end{equation}
Hence $\imSh{\mu\AFld} \subset \imShA.$

Suppose $\mu\not=0$ on $\Mman\setminus\Int{\FixA}$.
Then $\mu\not=0$ on some neighbourhood of $\Mman\setminus\Int{\FixA}$, so we can find another function $\nu:\Mman\to(0,+\infty)$ which is strictly positive and coincides with $\mu$ on a neighbourhood of $\Mman\setminus\Int{\FixA}$.
Hence $\BFld=\mu\AFld=\nu\AFld$ and $\AFld=\tfrac{1}{\nu}\BFld$.
Therefore $\imSh{\AFlow} \subset \imSh{\BFlow}$, and thus $\imSh{\BFlow}=\imSh{\AFlow}$.

Conversely, suppose $\mu(x)=0$ for some $x\in\Mman\setminus\Int{\FixA}$.
Let $\bfunc\in\funcA$ be a function such that $\bfunc(x)\not=0$, and $g=\ShA(\bfunc)$.
If $x$ is periodic, we will assume that $0<\bfunc(x)<\Per(x)$.
We claim that $g\in\imShA\setminus\imSh{\BFlow}$.

Suppose $g(x)=\BFlow(x,\afunc(x))$ for some $\afunc\in\funcB$.
Put $\bfunc'(x) = \gamma(x,\afunc(x))$.
Then by\;\eqref{equ:G_xax__F_x_gxax} $g(x) =\AFlow(x,\bfunc'(x))$, whence $\nu=\bfunc-\bfunc'\in\ZidA$.
On the other hand by\;\eqref{equ:reparam-function} $\gamma(x,s)=0$ for all $s$.
In particular, $\bfunc'(x)=0$.
We will show that $\bfunc(x)=0$ which contradicts to the assumption.
Consider three cases.

(a) $\Int{\FixA}\not=\varnothing$.
Then by Theorem\;\ref{th:ker2} $\bfunc-\bfunc'$ vanishes on $\Mman\setminus\Int{\FixF}$.
In particular, $\bfunc(x)=0$.

(b) $\Int{\FixA}=\varnothing$ and $\ZidA=\{0\}$.
Then $\bfunc\equiv\bfunc'$.

(c) $\Int{\FixA}=\varnothing$ and $\ZidA=\{n\theta\}_{n\in\ZZZ}$.
Then $\bfunc-\bfunc'=n\theta$ for some $n\in\ZZZ$.
But by assumption $0<\bfunc(x)<\Per(x)\leq\theta(x)$ and $\bfunc'(x)=0$, whence $\bfunc(x)=0$ as well.
\end{proof}

\begin{corollary}\label{cor:imSh_not_Eid}
Let $\mu:\Mman\to\RRR$ be a $\Cinf$ function such that $\mu(x)=0$ for some $x\in\FixF\setminus\Int{\FixF}$, and $\BFld=\mu\AFld$.
Then $\imShB \not=\EidBFlow{\infty}$.
\end{corollary}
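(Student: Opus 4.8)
The plan is to produce a single map $g\in\EidBFlow{\infty}\setminus\imShB$. After replacing $\AFld$ by $\nu\AFld$ for a suitable positive smooth $\nu$ — which by Lemma~\ref{lm:Sh-EA-under-reparametrization} changes neither $\imShB$, nor $\EidBFlow{\infty}$, nor $\FixA$ — I will assume that $\AFld$ generates a global flow, so $\funcA=\Ci{\Mman}{\RRR}$. Since $x\in\FixF\setminus\Int\FixF\subseteq\Mman\setminus\Int\FixA$ and $\mu(x)=0$, part~(2) of Lemma~\ref{lm:Sh-EA-under-reparametrization} already gives $\imShB\subsetneq\imShA$, and — this is the key point — its proof shows that \emph{every} $\bfunc\in\Ci{\Mman}{\RRR}$ with $\bfunc(x)\neq0$, where in the periodic case of Theorem~\ref{th:ker2} one additionally takes $0<\bfunc(x)<\theta(x)$ (legitimate because $\theta(x)>0$ as $x\in\FixF$), yields $g:=\ShA(\bfunc)\notin\imShB$. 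Because $\imShB\subseteq\EidBFlow{\infty}$ (the analogue of Lemma~\ref{lm:imShA_EidAV} for $\BFld$), it therefore suffices to choose one such $\bfunc$ with the extra property $g\in\EidBFlow{\infty}$.

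I would first carry this out when $x$ is \emph{not} an accumulation point of the set $Z$ of regular points of $\AFld$ at which $\mu$ vanishes; equivalently, on some neighbourhood of $x$ the function $\mu$ vanishes only along $\FixA$, so there the $\BFld$-orbits are exactly the maximal arcs of $\AFld$-orbits missing $\{\mu=0\}$. Pick a small neighbourhood $\Vman\ni x$ with $\mu\neq0$ on $\cl{\Vman}\setminus\FixA$, and a function $\bfunc\geq0$ with $\supp\bfunc$ compact in $\Vman$, $\bfunc(x)=\eps$ small and positive, chosen so small that $\AFld(\bfunc)>-1$ and that $\bfunc(y)$ is less than the $\AFld$-time from $y$ to the nearest zero of $\mu$ along its $\AFld$-orbit (a bound uniform on $\supp\bfunc$, since $\supp\bfunc$ is compact in $\Vman$ and regular $\AFld$-orbits avoid $\FixA$). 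Then for each $s\in[0,1]$ one has $\AFld(s\bfunc)=s\,\AFld(\bfunc)>-s\geq-1$, so by Lemma~\ref{lm:sh_loc_diff} the map $g_s:=\ShA(s\bfunc)$ is a diffeomorphism of $\Mman$ and in particular a local diffeomorphism at every point of $\FixB$; and since $0\leq s\bfunc(y)\leq\bfunc(y)$, the point $\AFlow(y,s\bfunc(y))$ stays on the $\BFld$-orbit of $y$ when $\mu(y)\neq0$, while if $\mu(y)=0$ then $y\in\FixA$ and $\AFlow(y,s\bfunc(y))=y$. Hence $g_s\in\EBFlow$ for all $s$, so $s\mapsto g_s$ is an $\infty$-homotopy in $\EBFlow$ from $\id$ to $g:=g_1$; thus $g\in\EidBFlow{\infty}$, while $g\notin\imShB$ by the first paragraph.

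The remaining case — where regular $\AFld$-points carrying $\mu=0$ do accumulate at $x$ — is the genuinely delicate one, and a different witness is needed: no global $\AFld$-shift $\ShA(\bfunc)$ with $\bfunc(x)\neq0$ can preserve all $\BFld$-orbits there, since continuity would force $\bfunc$ to vanish at $x$ (it must vanish on $Z$). Instead I would build $g$ directly as a diffeomorphism equal to the identity off a small neighbourhood of $x$ which, on the $\BFld$-orbit through a point $y$ near $x$, is the time-$\tau(y)$ map of $\BFld$ with $\tau(y)\to+\infty$ as $y\to x$; since $\BFld$ is very strongly degenerate at $x$ (indeed $\mu(x)=0=\AFld(x)$ forces the $1$-jet of $\BFld$ at $x$ to vanish) the spatial displacement stays controlled, and with care $g$ is $\Cinf$ and a local diffeomorphism at the nearby $\BFld$-fixed points. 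The rescaling $s\mapsto\text{``time-}s\tau(y)\text{ map of }\BFld\text{''}$ is then an $\infty$-homotopy in $\EBFlow$ from $\id$ to $g$, so $g\in\EidBFlow{\infty}$, whereas any global $\Cinf$ shift function of $\BFld$ for $g$ would be unbounded near $x$, so $g\notin\imShB$. In both cases the single real difficulty is the same: to keep the whole deforming family orbit-preserving for $\BFld$, hence inside $\EBFlow$, while forcing the $\BFld$-shift data of $g$ to degenerate at $x$ — exactly the mechanism already used in the proof of Lemma~\ref{lm:Sh-EA-under-reparametrization}.
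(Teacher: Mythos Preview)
The paper's argument is a single line: by Lemma~\ref{lm:Sh-EA-under-reparametrization} one has $\imShB\subsetneq\imShA\subseteq\EidAFlow{\infty}=\EidBFlow{\infty}$. The strict inclusion is part~2) of the lemma (since $x\in\FixF\setminus\Int\FixF\subseteq\Mman\setminus\Int\FixA$ and $\mu(x)=0$), the middle inclusion is Lemma~\ref{lm:imShA_EidAV}, and the equality $\EidAFlow{\infty}=\EidBFlow{\infty}$ follows from $\EAFlow=\EBFlow$, which part~1) gives once $\mu\ne0$ on $\Mman\setminus\FixA$. You have correctly sensed that this last condition is not literally part of the hypothesis, and set out to manufacture a witness $g\in\EidBFlow{\infty}\setminus\imShB$ by hand. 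Your Case~1 is essentially correct and already subsumes the situation the paper's chain treats (and the only place the corollary is actually used, \S\ref{sect:vf_R1}, where indeed $\mu\ne0$ off $\FixA$). So far your route is longer but arguably more honest.

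Your Case~2, however, is not a proof but a wish list. The sentence ``with care $g$ is $\Cinf$ and a local diffeomorphism at the nearby $\BFld$-fixed points'' is exactly where all the work lies: you are asking for a map that on each short $\BFld$-orbit near $x$ is the time-$\tau(y)$ map of $\BFld$ with $\tau(y)\to+\infty$, yet is $\Cinf$ at $x$ and at the infinitely many $\BFld$-fixed points accumulating there, and whose rescaled family $s\mapsto g_s$ is a genuine $\infty$-homotopy (continuity of all jets in $s$, not merely of the maps). None of this is automatic. Worse, your claim that $g\notin\imShB$ because any $\Cinf$ $\BFld$-shift function would be unbounded near $x$ is unjustified: two shift functions for the same map differ only by an element of the kernel of the shift map on the complement of $\FixB$, and that kernel may well contain functions unbounded at $x$, so a bounded smooth shift function is not a priori excluded by $\tau\to+\infty$. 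If you really want the corollary in this extra generality, Case~2 needs a genuinely new argument; for the paper's purposes the one-line chain above, together with the observation that in applications $\mu$ vanishes only on $\FixA$ so that $\EAFlow=\EBFlow$, is both shorter and complete.
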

\begin{proof}
By Lemma\;\ref{lm:Sh-EA-under-reparametrization} $\imShB\subsetneq\imShA\subseteq\EidAFlow{\infty}=\EidBFlow{\infty}$.
\end{proof}

\subsection{Property $\GSF$}\label{sect:prop-GSF}
In\;\cite{Maks:CEJM:2009} shift map was used to describe a class of vector fields $\AFld$ having the following property which will be called in the present paper $\GSF$%
\footnote{In \cite{Maks:CEJM:2009} I used the term \myemph{parameter rigidity} for this property.
But, as the referee of the present paper noted, usually the action of a group $\Gamma$ is called \myemph{parameter rigid} if any other $\Gamma$-action with the same orbits is smoothly conjugate to the original one (up to a linear reparametrization of orbits). Therefore here another term $\GSF$ is used. }: 
\begin{definition}\label{defn:fol-cond}{\rm(c.f.\;\cite{Santos:ETDS:2007}, \cite{Maks:CEJM:2009})}
Say that a $\Cinf$ vector field $\AFld$ on a manifold $\Mman$ has property $\GSF$ if for any $\Cinf$ vector field $\BFld$ on $\Mman$ such that every orbit of $\BFld$ is contained in some orbit of $\AFld$ there exists a $\Cinf$ function $\afunc$ such that $\BFld=\afunc \AFld$.
\end{definition}
Evidently, $\afunc$ always exists on the set of non-singular points of $\AFld$ and the problem is to prove that it can be smoothly extended to all of $\Mman$.
In particular, every non-singular vector field has $\GSF$.

It follows from\;\cite{Maks:CEJM:2009} that if $\imShA=\EidAFlow{\infty}$ and the map $\ShA$ of $\AFld$ satisfies a \myemph{smooth variant of covering path axiom at each $z\in\FixF$}, then $\AFld$ has property $\GSF$.
As an application of Theorem\;\ref{th:suffcond_for_imShA_EidAk} we will present a class of vector fields having property $\GSF$, see Theorem~\ref{th:LT_RH}. 

To explain the notation $\GSF$ let us reformulate this definition in algebraic terms.
Notice that the space $\VFM$ of $\Cinf$ vector fields on $\Mman$ can be regarded as a $\CMR$-module.
For each $\AFld\in\VFM$ define the \myemph{principal submodule} $\prs{\AFld}$ of $\AFld$ as follows:
$$
\prs{\AFld} := \{ \afunc\AFld \ : \ \afunc\in\CMR \}.
$$
Thus the inclusion $\prs{\BFld}\subset\prs{\AFld}$ means that $\BFld=\afunc\AFld$ for some $\afunc\in\CMR$, so \myemph{$\BFld$ is smoothly divided by $\AFld$}.

On the other hand, we can introduce the following relation $\fleq$ on $\VFM$ being reflexive and transitive.
Let $\AFld,\BFld\in\VFM$.
We say that $\BFld\fleq\AFld$ if and only if each orbit of $\BFld$ is contained in some orbit of $\AFld$.

Evidently, $\prs{\BFld}\subset\prs{\AFld}$ implies $\BFld\fleq\AFld$.

Then $\AFld$ satisfies condition $\GSF$ iff $\BFld\fleq\AFld$ implies $\prs{\BFld}\subset\prs{\AFld}$ for any $\BFld\in\VFM$.
In other words, $\prs{\AFld}$ is the {\bf greatest principal submodule} among all principal submodules $\prs{\BFld}$ whose {\bf foliation} by orbits  is obtained by partitioning the corresponding foliation of $\AFld$.

In Theorem\;\ref{th:LT_RH} we present a class of vector fields satisfying $\GSF$.
This will also extend\;\cite[Th.\;11.1]{Maks:CEJM:2009}.

\subsection{Comparison of shift maps for open sets}
\begin{lemma}\label{lm:per-non-per}
Let $\Wman\subset\Vman$ be two open connected subsets of $\Mman$.
If $\ShAV$ is periodic, then so is $\ShAW$.
Moreover, if $\vfunc$ and $\wfunc$ are the corresponding positive generators of $\ZidAV$ and $\ZidAW$ respectively, then $\vfunc|_{\Wman} = \wfunc.$
Hence if $\ShAW$ is non-periodic, then so is $\ShAV$.
\end{lemma}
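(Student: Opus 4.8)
The plan is to transfer the structure of $\ZidAV$ down to $\Wman$ via Theorem~\ref{th:ker2}, and then to identify the two positive generators by comparing each of them with the period function on a suitable dense set.

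Since $\ShAV$ is periodic, Theorem~\ref{th:ker2} puts us in its periodic case: $\FixF\cap\Vman$ is nowhere dense in $\Vman$ and $\ZidAV=\{n\vfunc\}_{n\in\ZZZ}$ with $\vfunc>0$ on all of $\Vman$. As $\Wman$ is open in $\Vman$ and $\FixF$ is closed, $\FixF\cap\Wman$ is nowhere dense in $\Wman$ as well (a nonempty subset of $\Wman$ open in $\Mman$ and contained in $\FixF$ would be such a subset of $\Vman$), so the periodic/non-periodic dichotomy of Theorem~\ref{th:ker2} applies to $\Wman$. Restricting the identity $\AFlow(x,\vfunc(x))\equiv x$ from $\Vman$ to $\Wman$ --- here $\vfunc|_{\Wman}$ lies in the domain of $\ShAW$ since its graph, being part of the graph of $\vfunc$, is contained in $\domA$ --- shows that $\vfunc|_{\Wman}\in\ZidAW$, and $\vfunc|_{\Wman}>0$, so $\ZidAW\neq\{0\}$. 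This rules out the non-periodic case, hence $\ShAW$ is periodic, with some positive generator $\wfunc$; this is the first assertion, and the final sentence of the lemma is its contrapositive.

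It remains to prove $\vfunc|_{\Wman}=\wfunc$. From $\vfunc|_{\Wman}\in\ZidAW=\{n\wfunc\}_{n\in\ZZZ}$ we obtain $\vfunc|_{\Wman}=n\wfunc$ for some integer $n\ge 1$ (both functions being positive). By property~(2) of the periodic case of Theorem~\ref{th:ker2} for $\Vman$ there is an open dense subset $\Qman\subset\Vman$ on which $\vfunc$ coincides with the period function; then $\Qman\cap\Wman$ is open and dense in $\Wman$, in particular nonempty, so pick $x_0\in\Qman\cap\Wman$. The point $x_0$ is periodic with $\Per(x_0)=\vfunc(x_0)>0$, and by Lemma~\ref{lm:ker1} applied to $\wfunc\in\ZidAW$ we have $\wfunc(x_0)=k\,\Per(x_0)$ for some integer $k\ge 1$. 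Hence $\Per(x_0)=\vfunc(x_0)=n\wfunc(x_0)=nk\,\Per(x_0)$, which forces $nk=1$, so $n=1$ and $\vfunc|_{\Wman}=\wfunc$ throughout $\Wman$. The only step here that is not routine bookkeeping with Theorem~\ref{th:ker2} and Lemma~\ref{lm:ker1} is this last one: excluding the a priori possibility $\wfunc=\tfrac{1}{n}\,\vfunc|_{\Wman}$ with $n\ge 2$ genuinely requires that the positive generator agrees with the minimal period on a dense set. (If $\Wman=\varnothing$ the statement is vacuous, so throughout we assume $\Wman\neq\varnothing$.)
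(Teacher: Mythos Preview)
Your proof is correct, and it reaches the conclusion by a different and more direct route than the paper. After establishing $\vfunc|_{\Wman}=n\wfunc$ with $n\ge 1$, the paper rules out $n\ge 2$ by a geometric argument: it defines $\dif(x)=\AFlow(x,\vfunc(x)/n)$ on the $\AFld$-invariant saturation of $\Vman$, observes that this gives a $\ZZZ_n$-action which is the identity on the nonempty open set $\Wman$, and then invokes M.~Newman's theorem on finite group actions to force $\dif=\id_{\Vman}$, hence $\tfrac{1}{n}\vfunc\in\ZidAV$ and $n=1$. You instead exploit property~(2) of Theorem~\ref{th:ker2} directly: on the dense set $\Qman$ the generator $\vfunc$ equals the minimal period, and comparing this with $\wfunc(x_0)\in\Per(x_0)\cdot\ZZZ$ immediately forces $nk=1$. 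Your argument is shorter and stays entirely within the algebraic description of the kernel already provided by Theorem~\ref{th:ker2}; the paper's approach, while heavier, is more self-contained in that it does not rely on the specific clause~(2) (whose proof in the cited references may itself require period-comparison arguments of a similar flavor).
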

\begin{proof}
We have that $\Int\FixF\cap\Vman=\varnothing$ and $\AFlow(x,\vfunc(x))=x$ for all $x\in\Vman$.
In particular, $\Int\FixF\cap\Wman=\varnothing$ as well and $\AFlow(x,\vfunc(x))=x$ for all in $\Wman$.
Hence $\vfunc|_{\Wman}\in\ZidAW$ is a non-zero shift function for the identity inclusion $i_{\Wman}:\Wman\subset\Mman$, and therefore $\ShAW$ is also periodic.

Let $\wfunc$ be the positive generator of $\ZidAW$.
Then $\vfunc|_{\Wman} = n\wfunc$ for some $n\in\NNN$.
We claim that $n=1$.

Indeed, by (4) of Theorem\;\ref{th:ker2} one can assume that $\Wman$ and $\Vman$ are $\AFld$-invariant.
Define the following map
$$\dif:\Vman\to\Vman, \qquad \text{by} \qquad \dif(x) =\AFlow(x,\vfunc(x)/n).$$
Then it follows from (3) of Theorem\;\ref{th:ker2} that $\dif^{k}(x)=\AFlow(x,k\,\vfunc(x)/n)$ for all $k=1,\ldots,n$.
In particular, $\dif^{n}=\id_{\Vman}$ and so $\dif$ yields a $\ZZZ_{n}$-action on $\Vman$.
Evidently, this action is fixed on a \myemph{non-empty open set} $\Wman$: if $x\in\Wman$, then
$$
\AFlow(x,\vfunc(x)/n) = \AFlow(x,\wfunc(x)) = x.
$$
As $\Vman$ is \myemph{connected}, we get from the well-known theorem of M.\;New\-man that $\dif=\id_{\Vman}$, see e.g.\;\cite{Newman:QJM:1931,Dress:Topol:1969}.
Thus $\tfrac{1}{n}\vfunc\in\ZidAV$.
But $\vfunc$ is the least positive function which generates $\ZidAV$, whence $n=1$.
\end{proof}

\subsection{Function of periods}
Suppose that all points of $\AFlow$ are periodic.
Consider the function $$\Per:\Mman\to(0,+\infty)$$ associating to each $x\in\Mman$ its period $\Per(x)$ with respect to $\AFlow$.
Then by D.\;B.\;A.\;Epstein\;\cite[\S5]{Epstein:AnnMath:1972} $\Per$ is lower semicontinuous and the set $B$ of its continuity points is open, see also D.\;Montgomery\;\cite{Montgomery:AJM:1937}.

We call the shift map $\ShA$ \myemph{periodic} if, in accordance with (2) of Theorem\;\ref{th:ker2}, $B$ is everywhere dense in $\Mman$, $\Per$ is $\Cinf$ on $B$, and even extends to a $\Cinf$ function on all of $\Mman$.
Otherwise, $\ShAM$ is \myemph{non-periodic}.
In the latter case the zero function $\mufunc\equiv 0$ is a unique $\Cinf$ function on $\Mman$ satisfying $\AFlow(x,\mufunc(x))=x$ for every $x\in\Mman$.
In particular, $\Per$ can not be extended to a $\Cinf$ function on all of $\Mman$.

We will now discuss obstructions for continuity of $\Per$.

It is possible that $\Per$ is unbounded near some points on $\Mman$.
The first examples of this sort seem to be constructed by G.\;Reeb\;\cite{Reeb:ASI:1952}.
Further examples of flows with all orbits closed and with locally unbounded period function were obtained by D.\;B.\;A.\;Epstein\;\cite{Epstein:AnnMath:1972} (a \myemph{real analytic} flow on a non-compact $3$-manifold),  D.\;Sullivan\;\cite{Sullivan:BAMS:1976, Sullivan:PMIHES:1976} (a $\Cinf$ flow on a \myemph{compact} $5$-manifold $S^3\times S^1\times S^1$), D.\;B.\;A.\;Epstein and E.\;Vogt\;\cite{EpsteinVogt:AnnMath:1978} (a flow on a compact $4$-manifold defined by \myemph{polynomial equations, with the vector field defining the flow given by polynomials}), E.\;Vogt\;\cite{Vogt:ManuscrMath:1977}, and others.

On the other hand, $\Per$ continuously extends from $B$ to all of $\Mman$ for the case of suspension flows (D.\;Montgomery\;\cite{Montgomery:AJM:1937}) and if $\Mman$ is a compact orientable $3$-manifold (D.\;B.\;A.\;Epstein\;\cite{Epstein:AnnMath:1972}).
In these cases $\AFlow$ can be reparametrized to a circle action.
More general sufficient conditions for existence of such reparametrizations were obtained by R.\;Edwards, K.\;Millett, and D.\;Sullivan\;\cite{EdwardsKennethSullivan:Top:1977}.

It should also be noted that due to A.\;W.\;Wadsley\;\cite{Wadsley:JDG:1975} an existence of a circle action with the orbits of $\AFld$ is equivalent to the existence a Riemannian metric on $\Mman$ in which all the orbits are geodesic.
Moreover, if $\BFlow:\Mman\times S^1\to\Mman$ is a smooth circle action, then by R.\;Palais\;\cite[Th.\;4.3.1]{Palais:AnnM:1961} $\Mman$ has an invariant Riemannian metric and by M.\;Kankaanrinta\;\cite{Kankaanrinta:TA:2005} this metric can be made complete.
Also if $\Mman$ is compact, then due to G.\;D.\;Mostow\;\cite{Mostow:AnnM:1957} and R.\;Palais\;\cite{Palais:JMM:1957} this action can be made orthogonal with respect to some embedding of $\Mman$ into a certain finite-dimensional Euclidean space.

\subsection{$\PN$-points}
Let $\Vman\subset\Mman$ be an open connected subset such that $\VmanS:=\Vman\setminus\FixF$ is also connected.
Suppose that the shift map $\ShAVS$ is periodic and let $\theta:\VmanS\to(0,+\infty)$ be the positive generator of $\ZidAVS$.
Then due to Lemma\;\ref{lm:per-non-per} the shift map $\ShAV$ is also periodic if and only if $\theta$ extends to a $\Cinf$ strictly positive function on all of $\Vman$.

Again one of the reasons for $\ShAV$ to be non-periodic is unboundedness of $\theta$ at some points of $\FixF$.
These effects are reflected in the following definition.

\begin{definition}\label{defn:PN-point}
Say that $z\in\FixF$ is a \myemph{$\PN$-point} if there exists an open neighbourhood $\Vman$ of $z$ such that $\VmanS=\Vman\setminus\FixF$ is connected and the shift map $\ShAVS$ is periodic, while for any open connected neighbourhood $\Wman\subset\Vman$ of $z$ the shift map $\ShAW$ is non-periodic.

Let $\theta:\VmanS\to(0,+\infty)$ be the positive generator of $\ZidAVS$.
Then $z$ will be called a \myemph{strong $\PN$-point}, if $\lim\limits_{x\to z}\theta(x)=+\infty$.
\end{definition}

The following lemma is a consequence of results obtained in\;\cite{Maks:period_functions}.
\begin{lemma}\label{lm:suff-cond-for-PN-point}{\rm c.f.\;\cite{Maks:period_functions}}
Let $z\in\FixF$.
Suppose that there exists an open connected neighbourhood $\Vman$ of $z$ such that the shift map $\ShAVS$ is periodic, where $\VmanS=\Vman\setminus\FixF$.
Let $B$ be the real Jordan normal form of the linear part $j^1\AFld(z)$ of $\AFld$ at $z$.
Then the following statements hold true.

\begin{enumerate}
\item 
$\Re(\lambda)=0$ for every eigen value $\lambda$ of $B$, so $B$ is similar to
$$
\mathop\oplus\limits_{\sigma=1}^{s} \Jord_{q_{\sigma}}(\pm i b_{\sigma}) \ \ \oplus \ \
\mathop\oplus\limits_{\tau=1}^{r} \Jord_{p_{\tau}}(0),
$$
for some $q_{\sigma},p_{\tau}\in\NNN$ and $b_{\sigma}\in\RRR\setminus\{0\}$.

\item
If $\ShAV$ is periodic, then $B\not=0$ and is similar to
$$
\mathop\oplus\limits_{\sigma=1}^{s} \Jord_{1}(\pm i b_{\sigma})
\ \oplus \  \mathop\oplus\limits_{\tau=1}^{r}\Jord_{1}(0) =
\left(\begin{smallmatrix} 
0 & b_1 \\ -b_1 & 0  \\
 &  & \cdots \\
 &  &    & 0 & b_s \\
 &  &    & -b_s & 0 \\
 & & & & & 0 \\
 & & & & & & \cdots 
\end{smallmatrix}\right),
$$
for some $s\geq1$ and $b_1,\ldots,b_s\in\RRR\setminus0$.

\item
If $B=0$, or if $B$ contains either a block $\Jord_{q}(\pm ib)$ or $\Jord_{q}(0)$ with $q\geq2$, then $z$ is a $\PN$-point.
Moreover, in this case $\theta$ is unbounded at $z$.
\end{enumerate}
\end{lemma}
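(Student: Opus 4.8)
The plan is to obtain all three assertions from the local analysis of the period function near a fixed point carried out in \cite{Maks:period_functions}, combined with two standard inputs: the strong unstable manifold theorem and Bochner's linearization theorem for compact group actions. Write $\AFld = F$, $A = j^1\AFld(z)$, and let $\theta:\VmanS\to(0,+\infty)$ be the positive generator of $\ZidAVS$ supplied by the periodic case of Theorem~\ref{th:ker2} (so $\VmanS$ is connected and, by item~(1) there, every point of $\VmanS$ is periodic; note also that this case of Theorem~\ref{th:ker2} already forces $\FixF\cap\Vman$ to be nowhere dense). To prove (1), suppose some eigenvalue of $A$ has non-zero real part. After replacing $\AFld$ by $-\AFld$ if necessary --- which changes neither the orbits nor the periodicity of $\ShAVS$ --- we may assume $A$ has an eigenvalue with positive real part, so the strong unstable manifold theorem provides a $\Cinf$, $\AFld$-invariant local unstable manifold $\Wman^{u}_{z}$ through $z$ of dimension $\geq 1$ on which $\AFlow(x,-t)\to z$ as $t\to+\infty$ while staying in a fixed neighbourhood of $z$. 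The linear part of $\AFld|_{\Wman^{u}_{z}}$ at $z$ is $A|_{T_z\Wman^{u}_{z}}$, which is invertible; hence $z$ is an isolated zero of $\AFld$ on $\Wman^{u}_{z}$, and $\Wman^{u}_{z}$ contains points $x\in\VmanS$ arbitrarily close to $z$. Such an $x$ cannot be periodic: if $\AFlow(x,P)=x$ with $P>0$, then $x=\lim_{n\to\infty}\AFlow(x,-nP)=z$. This contradicts periodicity of $\ShAVS$, so all eigenvalues of $A$ are purely imaginary and $B$ has the asserted form.

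For (2), assume in addition that $\ShAV$ is periodic. Then by item~(4) of Theorem~\ref{th:ker2} the function $\theta$ extends to a $\Cinf$ positive function on $\Uman=\AFlow(\Vman\times\RRR)\ni z$ and $\BFld=\theta\AFld$ generates a circle action $\BFlow:\Uman\times S^1\to\Uman$ fixing $z$. Since $\AFld(z)=0$, the linear part of $\BFld$ at $z$ is $\theta(z)A$, so the linearized action is $s\mapsto e^{s\theta(z)A}$ and, being an action of $S^1=\RRR/\ZZZ$, it satisfies $e^{\theta(z)A}=\id$. A Jordan--Chevalley decomposition argument then shows that $\theta(z)A$, hence $A$, is semisimple with purely imaginary eigenvalues, which is exactly the statement that $B$ consists of $1\times1$ and $2\times2$ blocks only. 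Finally, if $A=0$ then the linearized action is trivial, so by Bochner's theorem the circle action is trivial on a neighbourhood of $z$, whence $\AFld\equiv0$ there and $z\in\Int\FixF$ --- but this is excluded, since periodicity of $\ShAV$ puts us in the ``nowhere dense'' case of Theorem~\ref{th:ker2}. Hence $B\neq0$.

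For (3), observe that the hypothesis on $B$ together with (1) says precisely that $A$ is either zero or not semisimple, so the conclusion ``$A$ is semisimple and $A\neq0$'' obtained in the Step~2 argument fails. Running that argument for an arbitrary open connected neighbourhood $\Wman\subset\Vman$ of $z$ therefore shows that $\ShAW$ cannot be periodic, for any such $\Wman$; taking the $\Vman$ of the hypothesis as the distinguished neighbourhood in Definition~\ref{defn:PN-point}, we conclude that $z$ is a $\PN$-point. It remains to show that $\lim_{x\to z}\theta(x)=+\infty$, and this is the step I expect to be the real obstacle: the first-order (linearization) estimate alone only yields that, were $\theta$ bounded near $z$, then $\theta(x_n)\to0$ along a Jordan-top direction, and ruling this out requires the quantitative estimates on periods of the nonlinear flow near a degenerate fixed point established in \cite{Maks:period_functions}, which I would invoke rather than reprove.
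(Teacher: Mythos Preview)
Your argument is correct and close to the paper's, but the logical organization of (2) and (3) differs in an instructive way. The paper proves (3) first --- citing \cite{Maks:period_functions} for the existence of a sequence $x_i\to z$ with $\Per(x_i)\to+\infty$, hence $\theta(x_i)\to+\infty$ --- and only then finishes (2): the semisimplicity of $B$ comes from the induced linear $S^1$-action on $T_z\Vman$ (representation theory of $SO(2)$), while $B\neq0$ is not obtained from that linear argument but is deduced from (3). You instead make (2) self-contained: semisimplicity via the identity $e^{\theta(z)A}=\id$ and Jordan--Chevalley, and $B\neq0$ via Bochner's linearization (trivial tangent action $\Rightarrow$ locally trivial action $\Rightarrow$ $\AFld\equiv0$ near $z$, contradicting nowhere-density). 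You then get the $\PN$-point half of (3) as the contrapositive of (2) applied to an arbitrary $\Wman$. Both routes are valid; yours buys a cleaner (2) at the price of invoking Bochner, while the paper's route is more economical, relying only on the period estimates that have to be cited anyway.

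One small correction in (3): you write ``It remains to show that $\lim_{x\to z}\theta(x)=+\infty$'', but the statement only claims that $\theta$ is \emph{unbounded} at $z$; the full limit is the definition of a \emph{strong} $\PN$-point (Definition~\ref{defn:PN-point}), which is not asserted here. What \cite{Maks:period_functions} provides (and what the paper uses) is a single sequence $x_i\to z$ with $\Per(x_i)\to+\infty$; since $\theta(x_i)=n_i\Per(x_i)$ with $n_i\in\NNN$, this gives $\theta(x_i)\to+\infty$, which is exactly unboundedness. Your closing remarks about ``$\theta(x_n)\to0$ along a Jordan-top direction'' are heading toward a different (and harder) argument than is needed.
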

\begin{proof}
(1)
Since $\ShAVS$ is periodic, we have that $\VmanS$ consists of periodic points only.
If $B$ has an eigen value $\lambda$ with $\Re(\lambda)\not=0$, then by the Hadamard-Perron's theorem, e.g.\;\cite{HirschPughShub:LNM538:1977}, there exists a non-periodic orbit $\orb$ of $\AFld$ such that $z\in\cl{\orb}\setminus\orb$.
Hence $\VmanS\cap\orb\not=\varnothing$.
This is a contradiction.

(3) It is shown in\;\cite{Maks:period_functions} that if $B=0$ or if $B$ contains either a block $\Jord_{q}(\pm ib)$ or $\Jord_{q}(0)$ with $q\geq2$, then there exists a sequence $\{x_i\}_{i\in\NNN}$ in $\VmanS$ converging to $z$ such that $\lim\limits_{i\to\infty}\Per(x_i)=+\infty$.
Since $\theta(x_i)=n_i\Per(x_i)$ for some $n_i\in\NNN$, we obtain $\lim\limits_{i\to\infty}\theta(x_i)=+\infty$ as well.

(2)
Suppose $\ShAV$ is periodic, so we have a circle action $\BFlow$ on $\AFlow(\Vman\times\RRR)$ defined in (4) of Theorem\;\ref{th:ker2}.
This action induces a linear circle action on the tangent space $T_{z}\Vman$, whence it follows from standard results about representations of $SO(2)$ that $B$ is similar to the matrix 
$\oplus_{\sigma=1}^{s} \Jord_{1}(\pm i b_{\sigma}) \oplus_{\tau=1}^{r}\Jord_{1}(0)$.
Notice that these arguments do not prove that $B\not=0$, however this holds due to (3).
\end{proof}

Statement (3) of Lemma\;\ref{lm:suff-cond-for-PN-point} does not claim that $z$ is a strong $\PN$-point, i.e. $\lim\limits_{i\to\infty}\theta(x_i)=+\infty$ for any sequence of periodic points $\{x_i\}_{i\in\NNN}$ converging to $z$.

\begin{example}\label{exmp:top_sing}\rm
Let $a\leq b\in\NNN$ and $f:\RRR^2\to\RRR$ be the polynomial defined by $f(x,y)=x^{2a}+y^{2b}$.
Then the Hamiltonian vector field 
$$
\AFld(x,y) = -f'_{y} \tfrac{\partial}{\partial x} + f'_{x} \tfrac{\partial}{\partial y}=
-2b y^{2b-1} \tfrac{\partial}{\partial x} + 2 a x^{2a-1} \tfrac{\partial}{\partial y}
$$
of $f$ has the following property: the origin $\orig\in\RRR^2$ is a unique singular point of $\AFld$, and all other orbits are concentric closed curves wrapped once around $\orig$, see Figure\;\ref{fig:top_center}.
It follows from smoothness Poincar\'e's return map for orbits of $\AFld$ that the period function $\theta:\RRR^2\setminus\orig\to(0,+\infty)$ defined by $\theta(z)=\Per(z)$ is $\Cinf$.
Also notice that, $j^1\AFld(\orig)$ is given by one of the following matrices:
$$
\begin{array}{ccccc}
\left(\begin{smallmatrix} 0 &  2 \\ -2 & 0  \end{smallmatrix}\right)
& \quad\quad  & 
\left(\begin{smallmatrix} 0 &  2 \\  0 & 0  \end{smallmatrix}\right)
& \quad\quad  &
\left(\begin{smallmatrix} 0 &  0 \\ 0 & 0  \end{smallmatrix}\right) \\ [2mm]
1)~a=b=1,
&  &
2)~a=1, b\geq2,
& &
3)~a, b\geq2.
\end{array}
$$

In the case 1) $\AFld$ is linear and its flows is given by $\AFlow(z,t)=e^{2 i t}z$.
Hence $\theta(z) = \Per(z)=\pi$ for all $z\in\RRR^2\setminus\orig$, so $\theta$ extends to a $\Cinf$ function on all of $\RRR^2$ if we put $\theta(\orig)=\pi$.

In the case 2) $j^1\AFld(\orig)$ is nilpotent and in the case 3) $j^1\AFld(\orig)=0$.
Then by (3) of Lemma\;\ref{lm:suff-cond-for-PN-point} $\theta$ is unbounded at $\orig$, so $\orig$ is a $\PN$-point.
Moreover, it easily follows from the structure of orbits of $\AFld$ that in fact
$\lim\limits_{x\to\orig}\theta(z)=+\infty$, i.e. $\orig$ is a strong $\PN$-point for $\AFld$.
\end{example}
\begin{figure}[ht]
\begin{tabular}{ccccc}
\includegraphics[height=1.2cm]{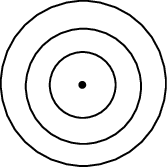} & \qquad\qquad &
\includegraphics[height=1.2cm]{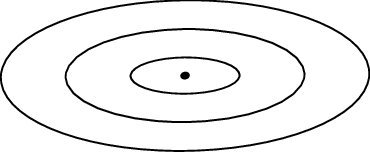} & \qquad\qquad &
\includegraphics[height=1.2cm]{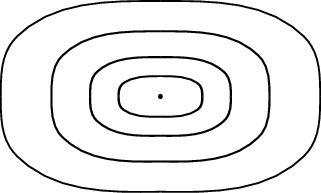} \\
1) & & 2) & & 3)
\end{tabular}
\caption{}\protect\label{fig:top_center}
\end{figure}

\section{Shift functions on the set of regular points}\label{sect:shift-func-at-reg}
Throughout the paper we will assume that $\SmParMan$ is a connected smooth manifold, $\CntParMan$ is a \myemph{path connected} and \myemph{locally path connected} topological space, $\sx_0\in\SmParMan$, and $\tau_0\in\CntParMan$.

Let $\Vman\subset\Mman$ be an open subset and $\Omega:\Vman\times\SmParMan\times\CntParMan\to\Mman$ be an $\dfrm{\SmParMan}{\CntParMan}{k}$-deformation in $\EAV$, and  $A\subset\Vman\times\SmParMan\times\CntParMan$ be a subset.
Then a function $\Lambda:A\to\RRR$ will be called a \myemph{shift function} for $\Omega$ if 
$$
\Omega(x,\sx,\cx)=\AFlow(x,\Lambda(x,\sx,\cx)), \qquad 
\forall (x,\sx,\cx)\in A.
$$

\begin{theorem}\label{th:shift-function-for-deformations}{\rm c.f.\;\cite[Th.\;25]{Maks:TA:2003}.}
Suppose $\Vman\cap\FixF=\varnothing$.

{\rm(1)}
Let $(x_0,\sx_0,\cx_0)\in\Vman\times\SmParMan\times\CntParMan$ and $a\in\RRR$ be such that \begin{equation}\label{equ:Omega_x0__F_x0_a}
\Omega(x_0,\sx_0,\cx_0)=\AFlow(x,a).
\end{equation}
Then there exist connected neighbourhoods $\Wman_{x_0}\subset\Vman$ of $x$, $\Wman_{\sx_0}\subset\SmParMan$ of $\sx_0$, $\Wman_{\cx_0}\subset\CntParMan$ of $\cx_0$, and a unique continuous shift function 
$$
\Delta:\Wman_{x_0}\times\Wman_{\sx_0}\times\Wman_{\cx_0} \to \RRR
$$
for $\Omega$ such that $\Delta(x_0,\sx_0,\cx_0)=a$.
Thus $\Omega(x,\sx,\cx)=\AFlow(x,\Delta(x,\sx,\cx))$ for all $(x,\sx,\cx)\in\Wman_{x_0}\times\Wman_{\sx_0}\times\Wman_{\cx_0}$.
Moreover, $\Delta$ is a $\dfrm{\Wman_{\sx_0}}{\Wman_{\cx_0}}{k}$-deformation in $\Ci{\Wman_{x_0}}{\RRR}$.

{\rm(2)}
Any continuous shift function $\Lambda:\Vman\times\SmParMan\times\CntParMan\to\RRR$ for $\Omega$ is an $\dfrm{\SmParMan}{\CntParMan}{k}$-deformation. 

{\rm(3)}
Denote $\ParMan=\SmParMan\times\CntParMan$.
Suppose that 
\begin{enumerate}
 \item[\rm(a)]
for each $x\in\Vman$ the map 
$\Omega_{x}:\ParMan\to\orb_x$ defined by $\Omega_{x}(\pp) = \Omega(x,\pp)$ is null-homotopic, (this holds e.g. when $\ParMan$ is simply connected i.e. $\pi_1\ParMan=0$, or when $\AFld$ has no closed orbits),
 \item[\rm(b)]
for some $\pp_0\in\ParMan$ the map $\Omega_{\pp_0}$ has a continuous shift function $\afunc:\Vman\to\RRR$, i.e. $\Omega_{\pp}(x)=\AFlow(x,\afunc(x))$.
\end{enumerate}
Then there exists a unique continuous shift function $\Lambda:\Vman\times\ParMan\to\RRR$ for $\Omega$ such that $\Lambda_{\pp_0}=\afunc$.
\end{theorem}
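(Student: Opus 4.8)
\emph{Plan.} I would prove the three parts in the stated order; part~(1) is the technical core, while (2) and~(3) are formal consequences of~(1), of the structure of $\ZidAV$ (Lemma~\ref{lm:ker1}, Theorem~\ref{th:ker2}), and of a standard monodromy argument. For \emph{part~(1)} the idea is to straighten the flow near $y_0:=\Omega(x_0,\sx_0,\cx_0)=\AFlow(x_0,a)$. Since $\Vman\cap\FixF=\varnothing$ we have $\AFld(y_0)\neq0$, so pick a codimension-one submanifold $N\ni y_0$ transverse to $\AFld$ with local defining function $\rho$. The implicit function theorem applied to $(x,s)\mapsto\rho(\AFlow(x,a+s))$ gives a smooth $\sigma_0$ near $x_0$ with $\sigma_0(x_0)=0$ and $\AFlow(x,a+\sigma_0(x))\in N$; a continuous implicit function theorem applied to $(x,\sx,\cx,s)\mapsto\rho(\AFlow(\Omega(x,\sx,\cx),s))$ — continuous in all variables, smooth in $(x,\sx,s)$ for each fixed $\cx$ — gives a continuous $\sigma_1$ near $(x_0,\sx_0,\cx_0)$, smooth in $(x,\sx)$ for each $\cx$, with $\sigma_1(x_0,\sx_0,\cx_0)=0$ and $\AFlow(\Omega(x,\sx,\cx),\sigma_1)\in N$. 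After shrinking so that the relevant points stay in a flow box around $y_0$ in which each orbit meets $N$ at most once, the two points $\AFlow(x,a+\sigma_0(x))$ and $\AFlow(\Omega(x,\sx,\cx),\sigma_1(x,\sx,\cx))$ of $N$ lie on the one orbit $\orb_x$ and hence coincide; this yields $\Omega(x,\sx,\cx)=\AFlow\bigl(x,\,a+\sigma_0(x)-\sigma_1(x,\sx,\cx)\bigr)$, so $\Delta:=a+\sigma_0-\sigma_1$ is the required shift function with $\Delta(x_0,\sx_0,\cx_0)=a$, and it is a $\dfrm{\Wman_{\sx_0}}{\Wman_{\cx_0}}{k}$-deformation because it is built from $\Omega$ by smooth operations and $\Omega$ is one. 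Uniqueness: if $\Delta'$ is another continuous shift function on a connected neighbourhood with the same value at the base point, then $\AFlow(x,(\Delta-\Delta')(x,\sx,\cx))=x$; for fixed $x$ the map $(\Delta-\Delta')(x,\cdot)$ is continuous and valued in the discrete set $\{t:\AFlow(x,t)=x\}$, hence constant in $(\sx,\cx)$, and the resulting function of $x$ is continuous, vanishes at $x_0$, and is valued in a set whose nonzero elements are locally bounded away from $0$ (an orbit cannot return to itself in arbitrarily small time away from $\FixF$), so it vanishes identically.

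\emph{Part~(2)} is local, since being an $\dfrm{\SmParMan}{\CntParMan}{k}$-deformation depends only on neighbourhoods. Near any $(x_1,\sx_1,\cx_1)$, apply (1) with $a=\Lambda(x_1,\sx_1,\cx_1)$ — admissible as $\Omega(x_1,\sx_1,\cx_1)=\AFlow(x_1,a)$ — to get a local continuous shift function $\Delta$ which is a $k$-deformation and takes the value $a$ at that point; on a connected neighbourhood $\Lambda$ is a continuous shift function agreeing with $\Delta$ there, so $\Lambda=\Delta$ by uniqueness. Hence $\Lambda$ agrees near each point with a $k$-deformation, and so is one.

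For \emph{part~(3)} I would first assume $\Vman$ connected (otherwise argue on each component). For fixed $x$ and a path $\gamma$ in $\ParMan$ from $\pp_0$ to $\pp$, cover $[0,1]$ by finitely many subintervals over each of which $\Omega$, near $\{x\}\times\gamma([t_i,t_{i+1}])$, has a continuous shift function by~(1); patch these together, adjusting each successive piece by the appropriate element of $\{t:\AFlow(x,t)=x\}$ so it matches the previous one at the break point, and starting from the value $\afunc(x)$ at $\pp_0$ (allowed by (b), as $\AFlow(x,\afunc(x))=\Omega(x,\pp_0)$). This yields a continuous $t\mapsto\Lambda(x,\gamma(t))$ with $\AFlow(x,\Lambda(x,\gamma(t)))=\Omega(x,\gamma(t))$, independent of $\gamma$ and of the choices by the usual monodromy argument — which reduces to the assertion that the lift of a loop is a loop, and this holds precisely because $\Omega_x$ is null-homotopic (assumption~(a); automatic when $\orb_x$ is non-closed). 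So $\Lambda(x,\cdot):\ParMan\to\RRR$ is well defined, continuous, with $\AFlow(x,\Lambda(x,\pp))=\Omega(x,\pp)$ and $\Lambda(\cdot,\pp_0)=\afunc$.

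The main obstacle is to upgrade this to \emph{joint} continuity. I would let $G\subset\ParMan$ be the set of $\pp_1$ such that $\Lambda$ is continuous at every $(x_1,\pp_1)$. For an arbitrary $(x_1,\pp_1)$, (1) with $a=\Lambda(x_1,\pp_1)$ gives a continuous shift function $\Delta$ on a connected $\Wman_{x_1}\times\Wman_{\pp_1}\subset\Vman\times\ParMan$ with $\Delta(x_1,\pp_1)=\Lambda(x_1,\pp_1)$; for each fixed $x\in\Wman_{x_1}$ the difference $\Lambda(x,\cdot)-\Delta(x,\cdot)$ is continuous and $\{t:\AFlow(x,t)=x\}$-valued on $\Wman_{\pp_1}$, hence equals a constant $c(x)$ with $c(x_1)=0$. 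If $\Lambda(\cdot,\pp')$ is continuous at $x_1$ for some $\pp'\in\Wman_{\pp_1}$, then $c=\Lambda(\cdot,\pp')-\Delta(\cdot,\pp')$ is continuous at $x_1$, vanishes there, hence vanishes near $x_1$ (its nonzero values being locally bounded away from $0$, as $\Vman\cap\FixF=\varnothing$), so $\Lambda=\Delta$ near $(x_1,\pp_1)$ and $\Lambda$ is continuous there. Taking $\pp'=\pp_0=\pp_1$, and using the uniqueness of shift functions of the single map $\Omega_{\pp_0}$ to identify $\Delta(\cdot,\pp_0)$ with $\afunc$, shows $\pp_0\in G$; the same dichotomy (with $\pp'=\pp_1$ for $\pp_1\in G$, and with $\pp'$ running through a sequence in $G$ approaching $\pp_1$) shows $G$ is open and closed. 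As $\ParMan$ is connected, $G=\ParMan$, so $\Lambda$ is continuous — and by part~(2) automatically an $\dfrm{\SmParMan}{\CntParMan}{k}$-deformation. Uniqueness of $\Lambda$ with $\Lambda_{\pp_0}=\afunc$ is as in~(1): the difference of two such is continuous, constant in $\pp$ with values in $\{t:\AFlow(x,t)=x\}$, and vanishes on $\Vman\times\{\pp_0\}$.
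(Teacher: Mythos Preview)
Your approach is essentially the same as the paper's: a flow-box (equivalently, transversal-section) construction for part~(1), uniqueness via the local lower bound on return times, reduction of~(2) to~(1), and for~(3) a pointwise lift $\Lambda(x,\cdot)$ of $\Omega_x$ through the covering $\AFlow_x:\RRR\to\orb_x$ followed by a gluing argument for joint continuity. Parts~(1) and~(2) are correct as written, and the construction and uniqueness of the pointwise lift in~(3) are fine.

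There is, however, a genuine gap in your argument that $G$ is \emph{open}. You want: if $\pp_1\in G$, then every $\pp_2$ sufficiently close to $\pp_1$ also lies in $G$, i.e.\ $\Lambda$ is continuous at $(x_2,\pp_2)$ for \emph{every} $x_2\in\Vman$. Your plan is to construct the local $\Delta$ from~(1) at $(x_2,\pp_2)$ on some $\Wman_{x_2}\times\Wman_{\pp_2}$ and then use $\pp'=\pp_1\in\Wman_{\pp_2}$. But the neighbourhood $\Wman_{\pp_2}$ depends on $x_2$, so there is no reason a single ``$\pp_2$ close to $\pp_1$'' should force $\pp_1\in\Wman_{\pp_2}$ for \emph{all} $x_2$ simultaneously. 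In other words, you have shown $\Lambda$ is continuous on an open set containing $\Vman\times\{\pp_1\}$, but not that this set contains a tube $\Vman\times\Wman$.

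The fix is small and uses only ideas already in your write-up: run the open--closed argument for each $x$ separately. Set $G_x=\{\pp:\Lambda\text{ is jointly continuous at }(x,\pp)\}$. Each $G_x$ is open (trivially) and closed (exactly your sequence argument, now with $x$ fixed throughout), and $\pp_0\in G_x$ since $\Lambda(\cdot,\pp_0)=\afunc$ lets you take $\pp'=\pp_0$. Hence $G_x=\ParMan$ for every $x$, so $G=\bigcap_x G_x=\ParMan$. This is precisely what the paper does, though it phrases it as first gluing the local patches $\Delta^{(x,\pp)}$ along the $\pp$-direction for each fixed $x$ and only then along the $x$-direction via the common value $\afunc$ at $\pp_0$.
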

\begin{proof}
Let $x_0\in\Vman$.
By assumption $\Vman\cap\FixF=\varnothing$, so $x_0$ is a regular point of $\AFld$.
Hence there exist $\eps>0$, a neighbourhood $\Uman$ of $x_0$, and a diffeomorphism
$\eta:\Uman\to\RRR^{n-1}\times(-\eps,\eps)$
such that in the coordinates $(y,s)$ on $\Uman$ induced by $\eta$ we have that 
\begin{equation}\label{equ:flow-at-flow-box}
\AFlow((y,s),t)=(y,s+t), \qquad \text{whenever} \ s,s+t\in(-\eps,\eps).
\end{equation}
We will call $\Uman$ a $\eps$-flow-box at $x_0$.
Notice that for any periodic point $z\in\Uman$ its period $\Per(z)\geq2\eps$.

\myemph{We will always assume below that for any periodic point $z\in\Uman$} 
\begin{equation}\label{equ:Perz_Aeps}
\Per(z)>10\eps.
\end{equation}
To achieve this it suffices to replace $\eps$-flow-box $\Uman$ e.g. with the ``central'' $\tfrac{\eps}{6}$-flow-box $\Uman'=\eta^{-1}\bigl(\RRR^{m-1}\times(-\tfrac{\eps}{6},\tfrac{\eps}{6})\bigr)$.
Then $\Per(z)\geq2\eps=12\cdot\tfrac{\eps}{6}> 10\cdot\tfrac{\eps}{6}$ for each periodic point $z\in\Uman'$.

Let also $p:\Uman\to(-\eps,\eps)$ be the projection to the last coordinate, i.e. $p(y,s) = s$.

\medskip 

(1) Let $\Uman$ be an $\eps$-flow box at $x_0$ satisfying\;\eqref{equ:Perz_Aeps}.
Then by\;\eqref{equ:Omega_x0__F_x0_a} $\AFlow_{-a}\circ\Omega(x_0,\sx_0,\cx_0)=x_0$, hence there exists a neighbourhood $\Wman$ of $(x_0,\sx_0,\cx_0)$ in $\Vman\times\SmParMan\times\CntParMan$ such that $\AFlow_{-a}(\Wman)\subset\Uman$.
We can also assume that $\Wman$ has the desired form $\Wman_{x_0}\times\Wman_{\sx_0}\times\Wman_{\cx_0}$ with connected multiples.

Define the function $\Delta:\Wman\to\RRR$ by
\begin{equation}\label{equ:local_formulas_for_shift_func}
\Delta(x,\sx,\cx)=p\circ\AFlow_{-a}\circ\Omega(x,\sx,\cx) - p(x) + a.
\end{equation}
Then it easily follows from~\eqref{equ:flow-at-flow-box} that $\Delta$ is a shift function for $\Omega$ satisfying statement (1).
In particular, it follows from Lemma\;\ref{lm:change_deform} that $\Delta$ is a $\dfrm{\Wman_{\sx_0}}{\Wman_{\cx_0}}{k}$-deformation as well as $\Omega$.
The following statement implies uniqueness of $\Delta$.
In fact it proves much more.

\begin{claim}\label{clm:local_shift_func_is_unique}
Let $(x,\sx,\cx)\in\Wman$ and $b\in\RRR$ be such that 
$$
\Omega(x,\sx,\cx)=\AFlow(x,b) \qquad \text{and} \qquad |\Delta(x,\sx,\cx) - b| \leq 10\eps.
$$
Then $\Delta(x,\sx,\cx)=b$.
\end{claim}
\begin{proof}
For simplicity denote $\xi=(x,\sx,\cx)$.
Notice that $\Omega(\xi)\in\orb_{x}$.
If $x$ is non-periodic, then there can exist a \myemph{unique} $c\in\RRR$ such that $\Omega(\xi)=\AFlow(x,c)$, whence $b=c=\Delta(\xi)$.

Suppose $x$ is periodic.
Then $\Delta(\xi)-b=k\cdot\Per(x)$ for some $k\in\ZZZ$.
On the other hand by\;\eqref{equ:Perz_Aeps} $\Per(\Omega(\xi))=\Per(x)>10\eps$.
Therefore
$$ |k\cdot\Per(x)| = |\Delta(\xi)-b| \leq 10\eps < \Per(x),$$
whence $k=0$.
\end{proof}
\begin{corollary}\label{cor:local_shift_func_is_unique}
Let $A\subset\Wman$ be a connected subset, and $\Delta': A\to \RRR$ be a continuous shift function for $\Omega$ such that for some $\xi=(x,\sx,\cx)\in A$ we have that $|\Delta(\xi)-\Delta'(\xi)|<10\eps$.
Then $\Delta=\Delta'$ on $A$.
\end{corollary}
\begin{proof}
Put 
$$
\widetilde{B}=\{ \eta\in A \ | \ |\Delta(\eta)-\Delta'(\eta)|<10\eps\},\quad
B=\{ \eta\in A \ | \ \Delta(\eta)=\Delta'(\eta)\}.
$$
We have to show that $B=A$.

Evidently, $B$ is closed in $A$ while $\widetilde{B}$ is open.
Moreover, by Claim\;\ref{clm:local_shift_func_is_unique} $B=\widetilde{B}$ and $\xi\in B\not=\varnothing$.
Hence $B=\widetilde{B}=A$.
\end{proof}

Statement (2) is a direct consequence of (1).

\smallskip

\newcommand\nbh[2]{\Wman^{(#1,#2)}}
\newcommand\nbhX[2]{\Wman^{(#1,#2)}_{#1}}
\newcommand\nbhP[2]{\Wman^{(#1,#2)}_{#2}}
\newcommand\fnc[2]{\Delta^{(#1,#2)}}
\newcommand\lmd[1]{\Lambda^{#1}}

\newcommand\mbh[1]{\Wman^{#1}}
\newcommand\enc[1]{\Delta^{#1}}

\newcommand\nbhxp{\nbh{x}{\pp}}
\newcommand\nbhxpX{\nbhX{x}{\pp}}
\newcommand\nbhxpP{\nbhP{x}{\pp}}
\newcommand\fncxp{\fnc{x}{\pp}}

(3)
For $x\in\Vman$ let
$$\AFlow_{x}:\RRR \longrightarrow \orb_{x} \subset\Mman,
\qquad
\AFlow_{x}(\tim)=\AFlow(x,\tim)
$$
be the map representing the orbit $\orb_{x}$ of $x$.
Since $\Omega$ is a deformation in $\EAV$, we have that $\Omega(x\times \ParMan) \subset \orb_{x}$. By assumption this map is null-homotopic.
Then by the path covering property for covering maps there exists a continuous function $\lmd{x}:x\times\ParMan \to \RRR$ such that $\lmd{x}(\pp_0)=\afunc(x)$ and the following diagram is commutative:
$$
\begin{matrix}
\xymatrix{
                                                         & \RRR \ar[d]^{\AFlow_{x}} \\
 x\times\ParMan \ar@{.>}[ru]^{\lmd{x}} \ar[r]^{~~\Omega} & \orb_{x}
}
\end{matrix} 
 \qquad
\text{i.e}
\qquad
\Omega(x,\pp) = \AFlow(x,\lmd{x}(\pp)),
$$

Define the following function $\Lambda:\Vman\times\ParMan\to\RRR$ by $\Lambda(x,\pp)=\lmd{x}(\pp)$.
Then $\Lambda(x,\pp_0)=\afunc(x)$ and $\Omega(x,\pp)=\AFlow(x,\Lambda(x,\pp))$ for $(x,\pp)\in\Vman\times\ParMan$.
It remains to show that $\Lambda$ is continuous.

By (1) for each $(x,\pp)\in\Vman\times\ParMan$ there are connected neighbourhoods $\nbhxpX\subset\Vman$ of $x$ and $\nbhxpP\subset\ParMan$ of $\pp$, and a continuous shift function 
$$\fncxp:\nbhxp=\nbhxpX\times\nbhxpP\longrightarrow\RRR$$ for $\Omega$ such that 
\begin{equation}\label{equ:fncxp=lmdxp}
\fncxp(x,\pp)=\lmd{x}(\pp).
\end{equation}

Our aim is to show that $\fnc{x}{\pp} = \fnc{x'}{\pp'}$ on $\nbh{x}{\pp}\cap\nbh{x'}{\pp'}$ for all $(x,\pp),(x',\pp')\in\Vman\times\ParMan$.
Hence the functions $\{\fnc{x}{\pp}\}_{(x,\pp)\in\Vman\times\ParMan}$ will define a unique continuous function on all of $\Vman\times\ParMan$. 

Notice that the projection $p$ takes the values in $(-\eps,\eps)$.
Therefore we get from\;\eqref{equ:local_formulas_for_shift_func} that
\begin{equation}\label{equ:Deltaxi_Deltaeta_2eps}
|\fncxp(\xi)-\fncxp(\eta)|<4\eps, \quad  \text{for all} \quad \xi,\eta\in\nbhxp. 
\end{equation}

\begin{claim}\label{clm:when_fncxp_coincides}
Suppose that $$|\fnc{x}{\pp}(\xi) - \fnc{x'}{\pp'}(\xi)|< 2\eps$$ for some $\xi\in\nbh{x}{\pp}\cap\nbh{x'}{\pp'}$.
Then $\fnc{x}{\pp} = \fnc{x'}{\pp'}$ on $\nbh{x}{\pp}\cap\nbh{x'}{\pp'}$.
\end{claim}
\begin{proof}
If $\eta\in\nbh{x}{\pp}\cap\nbh{x'}{\pp'}$, then
\begin{multline*}
|\fnc{x}{\pp}(\eta)-\fnc{x'}{\pp'}(\eta)| \ \leq \ 
|\fnc{x}{\pp}(\eta)-\fnc{x}{\pp}(\xi)| \ + \\ + \
|\fnc{x}{\pp}(\xi)-\fnc{x'}{\pp'}(\xi)| \ + \ 
|\fnc{x'}{\pp'}(\xi)-\fnc{x'}{\pp'}(\eta)| \ < \\ < \ 4\eps \ + \  \eps \ + \ 4\eps \ = \ 10\eps.
\end{multline*}
Hence by Claim\;\ref{clm:local_shift_func_is_unique} $\fnc{x}{\pp}(\eta)=\fnc{x'}{\pp'}(\eta)$.
\end{proof}

\begin{claim}\label{clm:ext_Delta_to_x_P}
$\fnc{x}{\pp} = \fnc{x}{\pp'}$ on $\nbh{x}{\pp}\cap\nbh{x}{\pp'}$.
\end{claim}
\begin{proof}
Since $\fnc{x}{\pp}$ and $\lmd{x}$ are continuous shift functions for $\Omega$ on the connected set $x\times \nbhxpP$, it follows from\;\eqref{equ:fncxp=lmdxp} and Corollary\;\ref{cor:local_shift_func_is_unique} that $\fnc{x}{\pp}=\lmd{x}$ on $x\times \nbhxpP$.
Hence
$$
\fnc{x}{\pp}(x,\pp') \ = \ 
\lmd{x}(\pp') \ \stackrel{\eqref{equ:fncxp=lmdxp}}{=\!=\!=} \ 
\fnc{x}{\pp'}(x,\pp') \ \text{for all} \ \pp'\in\nbhxpP.
$$
Then by Claim\;\ref{clm:when_fncxp_coincides} $\fnc{x}{\pp} = \fnc{x}{\pp'}$ on $\nbh{x}{\pp}\cap\nbh{x}{\pp'}$.
\end{proof}
Thus for each $x\in\Vman$ the functions $\{\fnc{x}{\pp}\}_{\pp\in\ParMan}$ give rise to a continuous shift function $\enc{x}:\mbh{x}\to\RRR$ for $\Omega$ on the open neighbourhood $\mbh{x}:=\cup_{\pp\in\ParMan} \nbhxp$ of $x\times\ParMan$ in $\Vman\times\ParMan$.
Since $\ParMan$ and each $\nbhxp$ is connected, we see that so is $\mbh{x}$.
Then it easily follows from Claim\;\ref{clm:local_shift_func_is_unique} and Corollary\;\ref{cor:local_shift_func_is_unique} that $\enc{x}$ is a unique shift function for $\Omega$ on $\mbh{x}$ such that $\enc{x}(\pp)=\lmd{x}(\pp)$ for at least one $\pp\in\ParMan$.

\begin{claim}\label{clm:ext_Delta_to_V_p0}
$\fnc{x}{\pp_0}=\fnc{x'}{\pp_0}$ on $\nbh{x}{\pp_0}\cap\nbh{x'}{\pp_0}$, whence $\enc{x}=\enc{x'}$ on $\mbh{x}\cap\mbh{x'}$.
\end{claim}
\begin{proof}
Notice that $\fnc{x}{\pp_0}$ and $\afunc$ are continuous shift functions for $\Omega$ on the connected set $\nbhX{x}{\pp_0}\times\pp_0$ and by assumption 
$$\fnc{x}{\pp_0}(x,\pp_0)=\lmd{x}(\pp_0)=\afunc(x).$$
Then by Corollary\;\ref{cor:local_shift_func_is_unique} 
$$\fnc{x}{\pp_0}(x',\pp_0)=\afunc(x')=\fnc{x}{\pp_0}(x',\pp_0) \ \text{for all} \ x'\in\nbhX{x}{\pp_0}.$$
Hence by Claim\;\ref{clm:when_fncxp_coincides} $\fnc{x}{\pp_0} = \fnc{x'}{\pp_0}$ on $\nbh{x}{\pp_0}\cap\nbh{x}{\pp_0}$.
\end{proof}

Thus the functions $\{\enc{x}\}_{x\in\Vman}$ define a continuous function on all of $\Vman\times\ParMan$ which coincides with $\Lambda$.
Theorem\;\ref{th:shift-function-for-deformations} is completed.
\end{proof}

\begin{corollary}\label{cor:local_existence_of_sh_func}
Let $\amap\in\EAV$, $z\in\Vman\setminus\FixF$, and $a\in\RRR$ be such that $\amap(z)=\AFlow(z,a)$.
Then there exists a neighbourhood $\Wman$ of $z$ and a unique continuous shift function $\afunc:\Wman\to\RRR$ for $\amap$ such that $\afunc(z)=a$.
In fact $\afunc$ is $\Cinf$.
\end{corollary}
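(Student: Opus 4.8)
The plan is to derive Corollary \ref{cor:local_existence_of_sh_func} from part (1) of Theorem \ref{th:shift-function-for-deformations} by treating a single map $\amap\in\EAV$ as a trivial deformation, and then to upgrade continuity to smoothness using the explicit local formula for the shift function.

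First I would set $\SmParMan=\CntParMan=\ast$ (a one-point space, which is trivially a connected smooth manifold, and a path-connected, locally path-connected space), and let $\Omega:\Vman\times\ast\times\ast\to\Mman$ be the constant deformation $\Omega(x,\ast,\ast)=\amap(x)$. Since $\amap\in\EAV$ and $\Vman\cap\FixF=\varnothing$ is not assumed here — wait, we do need $z$ to be a regular point, which is given since $z\in\Vman\setminus\FixF$; so I would first shrink $\Vman$ to an open neighbourhood $\Vman'$ of $z$ with $\Vman'\cap\FixF=\varnothing$ and note $\amap|_{\Vman'}\in\EAV[\Vman']$ still. Now $\Omega$ is an $\dfrm{\ast}{\ast}{k}$-deformation in $\EAV[\Vman']$ (the jet-continuity condition in Definition \ref{defn:deformation} is vacuous over a point, and $\amap$ itself is $\Cinf$). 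The hypothesis $\amap(z)=\AFlow(z,a)$ is exactly \eqref{equ:Omega_x0__F_x0_a} with $(x_0,\sx_0,\cx_0)=(z,\ast,\ast)$. Applying part (1) of Theorem \ref{th:shift-function-for-deformations} yields a connected neighbourhood $\Wman\subset\Vman'$ of $z$ and a unique continuous shift function $\afunc:\Wman\to\RRR$ for $\amap$ with $\afunc(z)=a$; uniqueness of $\afunc$ on (a connected) $\Wman$ among continuous shift functions with the prescribed value at $z$ is precisely what Corollary \ref{cor:local_shift_func_is_unique} gives, once $\Wman$ is taken inside the neighbourhood $\Wman_{x_0}$ produced by the theorem.

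The remaining point — and the only one requiring a genuinely new (if short) argument — is that $\afunc$ is in fact $\Cinf$, not merely continuous. For this I would invoke the explicit formula \eqref{equ:local_formulas_for_shift_func}: in an $\eps$-flow-box $\Uman$ at $z$ with projection $p$ to the flow direction, the theorem's shift function is
$$
\afunc(x) = p\circ\AFlow_{-a}\circ\amap(x) - p(x) + a .
$$
Each factor on the right is $\Cinf$: $\amap$ is smooth by hypothesis, $\AFlow_{-a}=\AFlow(\cdot,-a)$ is smooth since $\AFlow$ is a $\Cinf$ (local) flow, and $p$ is a smooth coordinate projection on $\Uman$; the composite makes sense after shrinking $\Wman$ so that $\AFlow_{-a}(\amap(\Wman))\subset\Uman$, which is possible by continuity since $\AFlow_{-a}(\amap(z))=z\in\Uman$. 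Hence $\afunc$ is $\Cinf$ on this smaller neighbourhood. By the uniqueness just established, this smooth $\afunc$ agrees with the continuous shift function on the overlap, so the globally-defined-on-$\Wman$ shift function is smooth.

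I do not expect any serious obstacle here: the corollary is essentially the ``no parameters, single map'' special case of Theorem \ref{th:shift-function-for-deformations}(1), and the only thing the theorem does not state outright is the $\Cinf$ regularity, which falls out immediately from the formula \eqref{equ:local_formulas_for_shift_func} used to prove the theorem. The one mild care point is bookkeeping of neighbourhoods — making sure the flow-box inclusion $\AFlow_{-a}(\amap(\Wman))\subset\Uman$ holds on the same $\Wman$ on which $\afunc$ is claimed — but this is routine shrinking.
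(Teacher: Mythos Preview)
Your proposal is correct and is exactly the approach the paper intends: the corollary is stated without proof as an immediate specialization of Theorem~\ref{th:shift-function-for-deformations}(1) to the case $\SmParMan=\CntParMan=\ast$.

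One small remark: you do not actually need a separate argument for $\Cinf$ regularity. The theorem already asserts that $\Delta$ is an $\dfrm{\Wman_{\sx_0}}{\Wman_{\cx_0}}{k}$-deformation in $\Ci{\Wman_{x_0}}{\RRR}$; when $\SmParMan=\CntParMan=\ast$, condition~(2) of Definition~\ref{defn:deformation} says exactly that $\Delta_{\ast}:\Wman_{x_0}\to\RRR$ is $\Cinf$. So the smoothness of $\afunc$ is part of the theorem's conclusion, not an extra step. Your recourse to the explicit formula~\eqref{equ:local_formulas_for_shift_func} is of course also valid (and is how the theorem itself establishes this), just slightly redundant.
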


\begin{corollary}\label{cor:shift-function-for-deformations}
Let $\Vman\subset\Mman$ be an open subset, $\VmanS=\Vman\setminus\FixF$, $\afunc\in\Ci{\VmanS\setminus\FixF}{\RRR}$, and $\Omega:\Vman\times\SmParMan\times\CntParMan\to\Mman$ be an $\dfrm{\SmParMan}{\CntParMan}{k}$-deformation in $\EAV$ such that $\Omega(x,\sx_0,\tau_0)=\AFlow(x,\afunc(x))$ for all $x\in\VmanS$.

If $\pi_1(\SmParMan\times\CntParMan)=0$, then there exists a unique $\dfrm{\SmParMan}{\CntParMan}{k}$-deformation
$\Lambda: \VmanS\times\SmParMan\times\CntParMan\to \RRR$
such that $\Lambda(x,\sx_0,\tau_0)=\afunc(x)$ for $x\in\VmanS$, and 
$\Omega(x,\sx,\tau)=\AFlow(x,\Lambda(x,\sx,\tau))$ for $(x,\sx,\tau)\in\VmanS\times\SmParMan\times\CntParMan.$
\end{corollary}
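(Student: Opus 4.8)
The plan is to deduce this from Theorem~\ref{th:shift-function-for-deformations} applied to the \emph{fixed-point-free} open set $\VmanS=\Vman\setminus\FixF$, with space of parameters $\ParMan=\SmParMan\times\CntParMan$ and base point $\pp_0=(\sx_0,\tau_0)$. First I would restrict the given deformation to $\Omega|_{\VmanS\times\ParMan}$. Since $\VmanS\cap\FixF=\varnothing$, condition (2) in the definition of $\EAV$ is vacuous over $\VmanS$, so $\Omega|_{\VmanS\times\ParMan}$ is an $\dfrm{\SmParMan}{\CntParMan}{k}$-deformation in $\mathcal{E}(\AFlow,\VmanS)$; moreover $\VmanS\cap\FixF=\varnothing$ is exactly the hypothesis under which Theorem~\ref{th:shift-function-for-deformations} operates.

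Next I would check the two assumptions of part (3) of that theorem. Assumption (a) holds because $\pi_1(\ParMan)=\pi_1(\SmParMan\times\CntParMan)=0$, so each map $\Omega_x:\ParMan\to\orb_x$, $x\in\VmanS$, is null-homotopic — this is precisely the case noted parenthetically in (a). Assumption (b) holds with $\pp_0=(\sx_0,\tau_0)$: the hypothesis $\Omega(x,\sx_0,\tau_0)=\AFlow(x,\afunc(x))$ for $x\in\VmanS$ says exactly that $\afunc\in\Ci{\VmanS}{\RRR}$ is a continuous shift function for $\Omega_{\pp_0}$. Theorem~\ref{th:shift-function-for-deformations}(3) then produces a unique continuous shift function $\Lambda:\VmanS\times\ParMan\to\RRR$ for $\Omega$ with $\Lambda_{\pp_0}=\afunc$, i.e. $\Lambda(x,\sx_0,\tau_0)=\afunc(x)$ and $\Omega(x,\sx,\tau)=\AFlow(x,\Lambda(x,\sx,\tau))$ for all $(x,\sx,\tau)\in\VmanS\times\SmParMan\times\CntParMan$.

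Finally, to upgrade $\Lambda$ from a continuous map to an $\dfrm{\SmParMan}{\CntParMan}{k}$-deformation I would invoke part (2) of Theorem~\ref{th:shift-function-for-deformations}, which says that any continuous shift function for $\Omega$ over $\VmanS$ is automatically such a deformation. Uniqueness of $\Lambda$ — even merely as a continuous shift function with $\Lambda_{\pp_0}=\afunc$ — is built into part (3): for each fixed $x\in\VmanS$ the slice $\Lambda(x,\cdot)$ must be the unique lift of $\Omega_x$ through the covering $\AFlow_x:\RRR\to\orb_x$ with prescribed value $\afunc(x)$ at $\pp_0$, since $\ParMan$ is path connected by the standing assumptions on $\SmParMan$ and $\CntParMan$.

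I do not expect any serious obstacle: the work has already been done in Theorem~\ref{th:shift-function-for-deformations}, and this corollary is essentially a repackaging with $\Vman$ replaced by $\VmanS$ and $\ParMan$ taken simply connected. The only subtlety worth flagging is that $\VmanS$ may be disconnected and that $\Omega$ originally lives on the larger set $\Vman$; neither causes trouble, since Theorem~\ref{th:shift-function-for-deformations} imposes no connectedness on its domain and restricting $\Omega$ to $\VmanS\times\ParMan$ preserves the deformation property. It should also be stressed that $\Lambda$ is obtained only on $\VmanS$ and in general does not extend over $\FixF$; arranging such an extension is precisely the problem treated in the later sections.
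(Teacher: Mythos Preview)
Your argument is correct and is exactly the intended one: the paper states this corollary without proof, as an immediate application of Theorem~\ref{th:shift-function-for-deformations}(3) to $\VmanS$ with the simply connected parameter space, followed by (2) to obtain the $\dfrm{\SmParMan}{\CntParMan}{k}$-deformation property.
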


\section{Main result}\label{sect:main-results}
Let $\Vman\subset\Mman$ be an open connected set and $\Omega:\Vman\times I\to \Mman$ be a $k$-homotopy in $\EAV$ such that $\Omega_{0}=\ShAV(\afunc)$ for some $\Cinf$ function $\afunc:\Vman\to\RRR$.
Then by Corollary\;\ref{cor:shift-function-for-deformations} $\afunc$ extends to a continuous shift function $\Lambda:(\Vman\setminus\FixF)\times I\to \RRR$ for $\Omega$ such that $\Lambda_{0}=\afunc$.
But in general $\Lambda_{\cx}$ for $\cx>0$ can not be extended to a $\Cinf$ function on $\Vman$.

Such examples were constructed in\;\cite{Maks:MFAT:2009}.
There $\Omega$ is a $0$-homotopy being not a $1$-homotopy, $\Omega_{0}=\id$, $\Lambda_{0}=0$, and $\lim\limits_{z\to\orig}\Lambda_{1}(z)=+\infty$.
Our main result shows that this situation is typical: non-extendability of $\Lambda_{\cx}$ to a $\Cinf$ function almost always happens only for $0$-homotopies that are not $1$-homotopies and for special types of singularities.

Let $\CntParMan$ be a compact, simply connected and locally path connected topological space and $\org\in\CntParMan$.

\begin{theorem}\label{th:main_result}
Suppose $\FixF$ is nowhere dense in $\Vman$ and there exists $z\in\FixF$ such that either of the following conditions holds true:
\begin{enumerate}
 \item[\rm(a)]
$z$ is not a $\PN$-point;
 \item[\rm(b)]
$z$ is a $\PN$-point, $j^1\AFld(z)=0$, and $k\geq1$;
 \item[\rm(c)]
$z$ is a strong $\PN$-point and $k\geq1$.
\end{enumerate}

Let $\Omega:\Vman\times\CntParMan\to\Mman$ be a \myemph{$\dfrmc{\CntParMan}{k}$-deformation in $\imShAV$}, $\afunc\in\Ci{\Vman}{\RRR}$ be any shift function for $\Omega_{\org}$, and  $\Lambda:(\Vman\setminus\FixF)\times\CntParMan\to\RRR$ be a unique continuous shift function for $\Omega$ such that $\Lambda_{\org}=\afunc$, see Theorem\;\ref{th:shift-function-for-deformations}.
Then there exists a neighbourhood $\Wman$ of $z$ such that for each $\cx\in\CntParMan$ the function $\Lambda_{\cx}$ also extends to a $\Cinf$ function on $\Wman$, so $\Omega_{\cx}|_{\Wman}=\ShAW(\Lambda_{\cx}|_{\Wman})$.
\end{theorem}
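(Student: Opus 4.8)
The plan is to pass to a small ball $\Wman\ni z$, observe that since $\Omega_{\cx}\in\imShAV$ for every $\cx$ we also have $\Omega_{\cx}|_{\Wman}\in\imShAW$, so that $\Omega|_{\Wman\times\CntParMan}$ is again a $\dfrmc{\CntParMan}{k}$-deformation in $\imShAW$, and then to work entirely on $\Wman$. The first reduction is that $\Lambda_{\cx}$ extends near $z$ to a continuous function if and only if it extends to a $\Cinf$ one: each $\Omega_{\cx}|_{\Wman}$ admits \emph{some} $\Cinf$ shift function $\beta_{\cx}\in\Ci{\Wman}{\RRR}$, and by Lemma~\ref{lm:ker1} and Theorem~\ref{th:ker2} the difference $\Lambda_{\cx}-\beta_{\cx}$ on $\Wman\setminus\FixF$ lies in $\kersh(\ShA_{\Wman\setminus\FixF})$, hence equals $\nu_{\cx}\,\theta$ for a locally constant $\nu_{\cx}\colon\Wman\setminus\FixF\to\ZZZ$, where $\theta$ is the positive generator from Theorem~\ref{th:ker2} (present, after possibly passing to a subcase, since $\ShA_{\Wman\setminus\FixF}$ is periodic for small $\Wman$ in all of (a)--(c)). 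Shrinking so that $\Wman\setminus\FixF$ is connected — which is possible near the relevant $z$ — the function $\nu_{\cx}$ is a single integer, and the theorem reduces to showing $\nu_{\cx}=0$ for every $\cx$.

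Case (a) is elementary and uses no group action. If $\ShA_{\Wman\setminus\FixF}$ is non-periodic for all small $\Wman$, then $\kersh(\ShA_{\Wman\setminus\FixF})=0$, so the continuous shift function is unique and equals the $\Cinf$ function $\beta_{\cx}$. Otherwise $\ShA_{\Vman\setminus\FixF}$ is periodic for some small $\Vman$, and since $z$ is \emph{not} a $\PN$-point there is a connected neighbourhood $\Wman$ of $z$ with $\ShAW$ itself periodic; then by (4) of Theorem~\ref{th:ker2} the generator $\theta$ extends to a $\Cinf$ strictly positive function on $\Wman$, and $\Lambda_{\cx}=\beta_{\cx}+\nu_{\cx}\theta$ is visibly the restriction of a $\Cinf$ function on $\Wman$. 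Here $k$ plays no role.

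The heart of the proof is cases (b) and (c). Now $z$ is a $\PN$-point, so no small $\ShAW$ is periodic and $\theta$ is \emph{unbounded} at $z$ (by (3) of Lemma~\ref{lm:suff-cond-for-PN-point} when $j^1\AFld(z)=0$, and by hypothesis when $z$ is a strong $\PN$-point); consequently $\nu_{\cx}=0$ is equivalent to $\Lambda_{\cx}$ extending. I would prove that the integer $\nu_{\cx}$ depends \emph{locally constantly} on $\cx$; since $\CntParMan$ is connected and $\nu_{\org}=0$ (because $\Lambda_{\org}=\afunc$ is $\Cinf$ on $\Wman$), this forces $\nu_{\cx}\equiv 0$. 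For local constancy near a fixed $\cx_{0}$: both $\Omega_{\cx_{0}}$ and $\Omega_{\cx}$ lie in $\EAW$, hence are local diffeomorphisms at $z$, so for $\cx$ near $\cx_{0}$ (using local path connectedness of $\CntParMan$) the map $\Xi_{\cx}:=\Omega_{\cx_{0}}^{-1}\circ\Omega_{\cx}$ is defined on a fixed neighbourhood of $z$, is orbit preserving, fixes $z$, and — because the deformation is continuous into $\Wr{k}$ with $k\geq 1$ — is $\Wr{1}$-close to $\id$ as $\cx\to\cx_{0}$. The continuous shift function of $\Xi_{\cx}$ read off from $\Lambda$ has winding $\nu_{\cx}-\nu_{\cx_{0}}$, while $\Xi_{\cx}$ also possesses a $\Cinf$ shift function. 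Using the identity $\AFlow(x,\theta(x))=x$ (so that integer multiples of $\theta$ behave like the ``period function'' of a circle-type pseudo-action on $\Wman$), a nonzero winding difference would yield an effective $\ZZZ_{p}$-action on the connected ball $\Wman$ whose displacement is bounded by the $\Wr{1}$-distance of $\Xi_{\cx}$ to $\id$, hence arbitrarily small — contradicting the lower bound of Dress, Hoffman and Mann \cite{Dress:Topol:1969} on displacements of effective $\ZZZ_{p}$-actions on $\Wman$. Hence $\nu_{\cx}=\nu_{\cx_{0}}$ near $\cx_{0}$, and the conclusion follows.

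The main obstacle is precisely this last step: converting a hypothetical jump of $\nu_{\cx}$ into an effective $\ZZZ_{p}$-action on $\Wman$ with controlled, vanishingly small displacement so that the Dress--Hoffman--Mann estimate applies (this also subsumes the delicate continuity properties of local inverses of shift maps referred to in item~5 of Remark~\ref{rem:Maks:TA:2003-gaps}). This is where $k\geq1$ is indispensable: for $0$-homotopies there is no derivative control and the conclusion genuinely fails, as the examples of \cite{Maks:MFAT:2009} show. It is also where the extra hypotheses enter — $j^1\AFld(z)=0$ in (b) and ``strong'' in (c) ensure the degeneration of $\theta$ at $z$ is isotropic enough (no interfering rotational part coming from a nontrivial Jordan block of $j^1\AFld(z)$) to build the $\ZZZ_{p}$-action and make the displacement estimate, and they guarantee that ``$\nu_{\cx}=0$'' is equivalent to ``$\Lambda_{\cx}$ extends $\Cinf$-ly''.
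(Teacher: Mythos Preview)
Your treatment of case~(a) is correct and coincides with the paper's argument.

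For cases (b) and (c) there is a genuine gap. The unsupported step is the claim that ``a nonzero winding difference would yield an effective $\ZZZ_{p}$-action on the connected ball $\Wman$ whose displacement is bounded by the $\Wr{1}$-distance of $\Xi_{\cx}$ to $\id$''. No such action is constructed, and I do not see how one could be. The only natural $\ZZZ_{p}$-action in sight is $x\mapsto\AFlow(x,\theta(x)/p)$, but its displacement is a feature of the flow alone and is entirely unrelated to $\Xi_{\cx}$; conversely $\Xi_{\cx}$ is $\Wr{1}$-close to $\id$ but is not periodic of any finite order, so it does not furnish a finite group action either. You have the direction of the Newman--Dress--Hoffman--Mann estimate inverted: it provides a \emph{lower} bound on the displacement of a \emph{pre-existing} finite group action, not an obstruction to an orbit-preserving map with nonzero winding being $C^{1}$-small. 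Observe also that your scheme would prove local constancy of $\cx\mapsto\nu_{\cx}$, hence (since $\Lambda_{\cx}$ varies continuously on $\Wman\setminus\FixF$) continuity of $\cx\mapsto\beta_{\cx}$ near $z$, i.e.\ continuity of the local inverse of $\ShAW$ --- precisely the statement Remark~\ref{rem:Maks:TA:2003-gaps}(5) warns against, and the reason the paper explicitly disclaims continuity of the extended $\Lambda$ on $\Vman\times\CntParMan$.

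The paper's route avoids local constancy altogether. First (Lemma~\ref{lm:deform_in_imShAV}, using $k\geq1$ and simple connectedness and compactness of $\CntParMan$) one reduces to $\Omega_{\org}=i_{\Vman}$, $\Lambda_{\org}\equiv0$, and $j^{1}\Omega_{\cx}(z)=\id$ for \emph{every} $\cx\in\CntParMan$. Second, one introduces the \emph{single fixed} map $h(x)=\AFlow(x,\theta(x)/2)$; when $h$ is continuous it is a $\ZZZ_{2}$-action near $z$, and the Dress--Hoffman--Mann bound applied to \emph{this} action (not to anything built from $\Omega$) yields a sequence $x_{i}\to z$ with $\|z,x_{i}\|<L\cdot\|x_{i},h(x_{i})\|$. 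Third, the $1$-jet normalisation gives $\|x_{i},\Omega_{\cx}(x_{i})\|\leq\gamma(x_{i})\|z,x_{i}\|$ with $\gamma(z)=0$, uniformly in $\cx$ by compactness of $\CntParMan$; comparing, $\Omega_{\cx}(x_{i})\neq h(x_{i})$ for large $i$, which forces $|\Lambda_{\cx}(x_{i})|<\tfrac{1}{2}\theta(x_{i})$. Since $\theta(x_{i})\to\infty$ while each $\beta_{\cx}$ is continuous at $z$, one gets $|\Lambda_{\cx}(x_{i})-\beta_{\cx}(x_{i})|<\theta(x_{i})$ and hence $\nu_{\cx}=0$, for every $\cx$ separately --- no continuity in $\cx$ is ever invoked. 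The missing ingredient in your sketch is thus not a detail but the whole mechanism: the reduction to $j^{1}\Omega_{\cx}(z)=\id$ and the use of the \emph{fixed} half-period involution as the target of the Newman-type estimate.
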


Notice that the assumptions (a)-(c) do not include the case when $z$ is a $\PN$-point being not strong and such that $j^1\AFld(z)\not=0$.
Moreover, it is \myemph{not claimed} that $\Lambda$ becomes continuous on $\Vman\times\CntParMan$.

\begin{theorem}\label{th:suffcond_for_imShA_EidAk}
Let $\AFld$ be a vector field on $\Mman$ such that $\FixF$ is nowhere dense and let $k=0,\ldots,\infty$.
Suppose that for each $z\in\FixF$ there exists an open, connected neighbourhood $\Vman$ such that
$\imShAV=\EidAV{k}$
and either of the conditions {\rm(a)-(c)} of Theorem\;\ref{th:main_result} holds true.
Then \ $\imShA=\EidAFlow{k}.$
\end{theorem}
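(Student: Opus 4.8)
The plan is to prove the non-trivial inclusion $\EidAFlow{k}\subset\imShA$; the reverse inclusion is already part of Lemma~\ref{lm:imShA_EidAV}. By Lemma~\ref{lm:Sh-EA-under-reparametrization} and the discussion of reparametrizations in \S\ref{sect:shift-map}, replacing $\AFld$ by $\mu\AFld$ with $\mu\colon\Mman\to(0,+\infty)$ smooth changes neither $\imShA$, nor $\EAFlow$, nor $\FixF$ (hence not its nowhere density), nor the local hypotheses at the points of $\FixF$, so I may assume $\AFld$ generates a global flow and $\funcA=\Ci{\Mman}{\RRR}$ (this only simplifies bookkeeping with domains and could be dispensed with). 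Fix $\dif\in\EidAFlow{k}$ and choose a $k$-homotopy $\Omega\colon\Mman\times I\to\Mman$ in $\EAFlow$ with $\Omega_{0}=\id_{\Mman}$ and $\Omega_{1}=\dif$.

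The idea is to build one global shift function on $\Mman\setminus\FixF$ and then extend it smoothly across $\FixF$, using nowhere density to make everything consistent. For the first step, apply Corollary~\ref{cor:shift-function-for-deformations} with $\Vman=\Mman$, $\SmParMan$ a point, $\CntParMan=I$ (so $\pi_{1}(\SmParMan\times\CntParMan)=0$), base point $0$, and the shift function $\afunc\equiv0$ of $\Omega_{0}=\id_{\Mman}$: this yields a continuous $\dfrmc{I}{k}$-deformation $\Lambda\colon(\Mman\setminus\FixF)\times I\to\RRR$ with $\Lambda_{0}=0$ and $\Omega(x,t)=\AFlow(x,\Lambda(x,t))$ on $(\Mman\setminus\FixF)\times I$. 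In particular $\Lambda_{1}\colon\Mman\setminus\FixF\to\RRR$ is a shift function for $\dif$, and it is $\Cinf$ there (e.g.\ by Corollary~\ref{cor:local_existence_of_sh_func}). It now suffices to show that every $z\in\FixF$ has a neighbourhood on which $\Lambda_{1}$ extends to a $\Cinf$ function: since $\FixF$ is nowhere dense, $\Mman\setminus\FixF$ is dense, so these local extensions automatically agree with $\Lambda_{1}$ and with one another on overlaps, and hence glue to a single $\afunc\in\Ci{\Mman}{\RRR}$; then $\dif(x)=\AFlow(x,\afunc(x))$ holds on the dense set $\Mman\setminus\FixF$ and therefore, by continuity, on all of $\Mman$, i.e.\ $\dif=\ShA(\afunc)\in\imShA$.

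The remaining local statement is precisely what Theorem~\ref{th:main_result} is designed to deliver. Fix $z\in\FixF$ and let $\Vman$ be the open connected neighbourhood given by hypothesis, so $\imShAV=\EidAV{k}$, $\FixF$ is nowhere dense in $\Vman$, and one of the conditions (a)--(c) holds at $z$. Restricting $\Omega$ to $\Vman\times I$ (restriction to an open subset is continuous for the topologies $\Wr{k}$) gives a $k$-homotopy in $\EAV$ starting at the inclusion $i_{\Vman}$, so for each $t\in I$ the map $\Omega_{t}|_{\Vman}$ lies in $\EidAV{k}=\imShAV$; thus $\Omega|_{\Vman\times I}$ is a $\dfrmc{I}{k}$-deformation \emph{in $\imShAV$}, with $I$ compact, simply connected and locally path connected. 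Moreover $\afunc\equiv0$ is a shift function for $\Omega_{0}|_{\Vman}=i_{\Vman}$, and by the uniqueness part of Theorem~\ref{th:shift-function-for-deformations} the restriction $\Lambda|_{(\Vman\setminus\FixF)\times I}$ is the unique continuous shift function for $\Omega|_{\Vman\times I}$ equal to $0$ at $t=0$. Theorem~\ref{th:main_result} then produces a neighbourhood $\Wman\subset\Vman$ of $z$ such that $\Lambda_{1}|_{\Wman\setminus\FixF}$ extends to a $\Cinf$ function on $\Wman$, which is exactly the local claim; carrying this out for every $z\in\FixF$ and gluing as above finishes the proof.

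I expect the only genuinely non-formal point to be the compatibility of the local smooth extensions, and it is resolved by extending the \emph{same} globally defined function $\Lambda_{1}$ near every singular point — rather than patching together a priori unrelated local shift functions, which in the periodic case could differ from each other by nonzero periods; given this, compatibility is immediate from the density of $\Mman\setminus\FixF$. The other ingredients are routine: that the restriction of a $k$-homotopy to an open subset is again a $k$-homotopy, and that $\Omega_{t}|_{\Vman}\in\imShAV$ for all $t$ — the latter being the only place where the hypothesis $\imShAV=\EidAV{k}$ is used. Finally, the case $\FixF=\varnothing$ is trivially covered: then $\Mman\setminus\FixF=\Mman$ and $\Lambda_{1}$ is already a global $\Cinf$ shift function for $\dif$.
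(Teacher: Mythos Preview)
Your proof is correct and follows essentially the same route as the paper's own argument: pick a $k$-homotopy from $\id_{\Mman}$ to $\dif$, lift it to a shift function $\Lambda$ on $(\Mman\setminus\FixF)\times I$ with $\Lambda_{0}=0$ via Theorem~\ref{th:shift-function-for-deformations}/Corollary~\ref{cor:shift-function-for-deformations}, then for each $z\in\FixF$ use the hypothesis $\imShAV=\EidAV{k}$ to see that the restricted homotopy takes values in $\imShAV$ and invoke Theorem~\ref{th:main_result} to extend $\Lambda_{\cx}$ smoothly near $z$, with nowhere density of $\FixF$ forcing uniqueness and compatibility of the local extensions. The only cosmetic differences are that you reduce to a global flow first (harmless but unnecessary here) and single out $\Lambda_{1}$, whereas the paper states the extension for every $\Lambda_{\cx}$; your closing remarks on why gluing a single globally defined $\Lambda_{1}$ avoids period ambiguities are a useful clarification.
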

\begin{proof}
Let $\amap\in\EidAFlow{k}$, so there exists a $k$-homotopy $\Omega:\Vman\times I\to\Mman$ in $\EAV$ between the identity map $\Omega_0=\id_{\Mman}$ and $\Omega_1=\amap$.
By Theorem\;\ref{th:shift-function-for-deformations} there exists a unique $k$-homotopy $\Lambda:(\Vman\setminus\FixF)\times I \to \RRR$ being a shift function for $\Omega$ and such that $\Lambda_0\equiv 0$.
We will show that $\Lambda_{\cx}$ extends to a $\Cinf$ function on all of $\Mman$, whence $\Omega_{\cx}=\ShA(\Lambda_{\cx})$ for all $\cx\in I$.
This will imply $\imShA=\EidAFlow{k}.$

Let $z\in\FixF$ and $\Vman$ be a neighbourhood of $z$ such that $\imShAV=\EidAV{k}$.
Then $\Omega_{\cx}|_{\Vman}\in\EidAV{k}$, so $\Omega|_{\Vman\times I}$ is a $k$-homotopy in $\imShAV$.
Moreover, $\Lambda|_{(\Vman\setminus\FixF)\times I}$ is a unique extension of $\Lambda_{0}|_{\Vman}=0$.
By assumption either of the conditions (a)-(c) of Theorem\;\ref{th:main_result} holds true, whence for each $\cx\in I$ the function $\Lambda_{\cx}$ extends to a $\Cinf$ function on some neighbourhood of $z$.
Since $\FixF$ is nowhere dense such an extension is unique.
As $z$ is arbitrary, we obtain that $\Lambda_{\cx}$ smoothly extends to all of $\Mman$.
\end{proof}

For the proof of Theorem\;\ref{th:main_result} we need two preliminary results.

\subsection{Reduction of $1$-jets at singular points}\label{sect:reduct-1-jets}
Let $\Omega:\Vman\times\CntParMan\to\Mman$ be a $\dfrmc{\CntParMan}{k}$ deformation in $\imShAV$ and $\Lambda:(\Vman\setminus\FixF)\times\CntParMan\to\RRR$ be a shift function for $\Omega$.
Let also $z\in\FixF\cap\Vman$.
Choose local coordinates $(x_1,\ldots,x_m)$ at $z$ in which $z=\orig\in\RRR^m$.
Let $A$ be an $(m\times m)$-matrix being the linear part of $\AFld$ at $z$.
Then we have the exponential map 
$$
e:\RRR\to \mathrm{GL}(m,\RRR), \qquad e(t) = \exp(At).
$$
Denote its image by $\mathcal{R}_{A}$.
If $A\not=0$, then $e$ is an immersion, i.e. a local homeomorphism onto its image.

For each $\cx\in\CntParMan$ denote by $J_{\cx}$ the Jacobi matrix of $\Omega_{\cx}$ at $z$.
By assumption $\Omega_{\cx}=\ShAV(\afunc_{\cx})$ for some $\afunc_{\cx}\in\Ci{\Vman}{\RRR}$.
Then it is easy to show, \cite[Th.\;5.1]{Maks:CEJM:2009}, that
$$J_{\cx} = \exp(A\,\afunc_{\cx}(z)),$$
so we have a well-defined map 
$$
\eta:\CntParMan \to \mathcal{R}_{A}, \qquad 
\eta(\cx) = J_{\cx}.
$$
If $k\geq1$, then $\eta$ is continuous.
\begin{lemma}\label{lm:deform_in_imShAV}
Suppose that $\eta$ is continuous and lifts to a continuous map $\widetilde{\eta}:\CntParMan\to\RRR$ making the following diagram commutative:
$$
\xymatrix{
                         & \RRR \ar[d]^{e} \\
  \CntParMan \ar@{.>}[ru]^{\widetilde{\eta} } \ar[r]_{\eta} & \mathcal{R}_{A}
 }
$$

The latter holds for instance when $\CntParMan$ is compact, path connected, and simply connected, e.g. $\CntParMan=I^{\dd}$.

Let $\Wman\subset\Vman$ be an arbitrary small neighbourhood of $z$.
Then there exists a function $\nu:\Vman\times\CntParMan\to\RRR$ being a $\dfrmc{\CntParMan}{\infty}$ deformation in $\Ci{\Vman}{\RRR}$ and vanishing outside $\Wman\times\CntParMan$ such that the $\dfrmc{\CntParMan}{k}$-deformation
$$
\Omega':\Vman\times\CntParMan\to\Mman,
\qquad
\Omega'(x,\cx)=\AFlow(\Omega(x,\cx),-\nu(x,\cx))
$$
has the following properties: for each $\cx\in\CntParMan$
\begin{enumerate}
 \item
$j^1\Omega'_{\cx}(z)$ is the identity, and
 \item
the function $\Lambda'_{\cx}=\Lambda_{\cx} - \nu_{\cx}$ is a shift function for $\Omega'$.
\end{enumerate}
In particular, $\Lambda_{\cx}$ extends to a $\Cinf$ function near $z$ iff so does $\Lambda'_{\cx}$.
\end{lemma}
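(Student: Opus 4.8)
The plan is to reduce the $1$-jet by post-composing $\Omega$ with the reverse flow for a time that, near $z$, is the constant $\widetilde{\eta}(\cx)$. As noted in \S\ref{sect:shift-map}, reparametrisation changes neither $\EAV$ nor $\imShAV$, so we may assume that $\AFld$ generates a global flow; then all flow-compositions below are defined without restriction. Choose a $\Cinf$ function $\lambda:\Vman\to[0,1]$ equal to $1$ on a neighbourhood of $z$ and with $\supp\lambda\subset\Wman$, and put
$$
\nu(x,\cx)=\lambda(x)\,\widetilde{\eta}(\cx),
\qquad
\Omega'(x,\cx)=\AFlow\bigl(\Omega(x,\cx),-\nu(x,\cx)\bigr).
$$
Then $\nu$ vanishes outside $\supp\lambda\times\CntParMan\subset\Wman\times\CntParMan$; each $\nu_\cx=\widetilde{\eta}(\cx)\lambda$ is $\Cinf$; and since $\widetilde{\eta}$ is continuous and $\lambda$ is fixed, $(x,\cx)\mapsto j^{r}\nu_{\cx}(x)=\widetilde{\eta}(\cx)\,j^{r}\lambda(x)$ is continuous for every $r$, so $\nu$ is a $\dfrmc{\CntParMan}{\infty}$-deformation in $\Ci{\Vman}{\RRR}$.

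Next I would verify (1) and (2). Since $\Omega_{\cx}=\ShAV(\afunc_{\cx})$, the one-parameter group law $\AFlow(\AFlow(x,s),t)=\AFlow(x,s+t)$ gives $\Omega'_{\cx}(x)=\AFlow(x,\afunc_{\cx}(x)-\nu_{\cx}(x))=\ShAV(\afunc_{\cx}-\nu_{\cx})(x)$, so every $\Omega'_{\cx}$ lies in $\imShAV$; a routine chain-rule computation shows the $k$-jets of $\Omega'_{\cx}$ in $x$ depend continuously on $(x,\cx)$, so $\Omega'$ is a $\dfrmc{\CntParMan}{k}$-deformation in $\imShAV$. The same group law applied to $\AFlow(x,\Lambda_{\cx}(x))=\Omega_{\cx}(x)$ yields $\AFlow(x,\Lambda_{\cx}(x)-\nu_{\cx}(x))=\AFlow(\Omega_{\cx}(x),-\nu_{\cx}(x))=\Omega'_{\cx}(x)$, so $\Lambda'_{\cx}=\Lambda_{\cx}-\nu_{\cx}$ is a shift function for $\Omega'$ on $\Vman\setminus\FixF$, which is (2). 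For (1), $z$ is fixed by $\Omega'_{\cx}$, and by \cite[Th.\;5.1]{Maks:CEJM:2009} its Jacobi matrix there equals
$$
\exp\!\bigl(A(\afunc_{\cx}(z)-\nu_{\cx}(z))\bigr)
=\exp(A\,\afunc_{\cx}(z))\,\exp(-A\,\widetilde{\eta}(\cx))
=J_{\cx}\cdot e(\widetilde{\eta}(\cx))^{-1}
=J_{\cx}J_{\cx}^{-1},
$$
i.e.\ the identity; here I used $\lambda(z)=1$, that all powers of $A$ commute, and the lifting identity $e\circ\widetilde{\eta}=\eta$ together with $\eta(\cx)=J_{\cx}$. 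The final assertion is then immediate: $\nu_{\cx}$ is $\Cinf$ on all of $\Vman$, so $\Lambda_{\cx}=\Lambda'_{\cx}+\nu_{\cx}$ extends smoothly across $z$ if and only if $\Lambda'_{\cx}$ does.

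It remains to check the parenthetical claim that such a lift $\widetilde{\eta}$ exists when $\CntParMan$ is compact, path connected, locally path connected and simply connected. If $A=0$, then $\mathcal{R}_{A}$ is a single point and $\widetilde{\eta}\equiv0$. If $A\ne0$, then $e:\RRR\to\mathcal{R}_{A}$ is a local homeomorphism onto its image and a homomorphism whose kernel is a closed proper subgroup of $\RRR$, hence $\{0\}$ or $T\ZZZ$ for some $T>0$. In the periodic case $\mathcal{R}_{A}$ is compact, so the continuous bijection $\RRR/T\ZZZ\to\mathcal{R}_{A}$ is a homeomorphism and $e$ is a covering map, whence the simply connected, locally path connected $\CntParMan$ lifts $\eta$ by covering space theory; the case $\ker e=\{0\}$, where $e$ is injective and the lift, if it exists, is unique, is handled similarly (cf.\;\cite{Maks:CEJM:2009}). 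I expect the only genuinely delicate points to be this lifting statement and the bookkeeping that post-composition with the flow preserves the class of $\dfrmc{\CntParMan}{k}$-deformations in $\imShAV$; the cancellation of the $1$-jet at $z$, which is the substance of the lemma, drops out immediately once $\widetilde{\eta}$ is available.
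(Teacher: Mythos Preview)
Your proof is correct and follows exactly the same construction as the paper: define $\nu(x,\cx)=\lambda(x)\,\widetilde{\eta}(\cx)$ for a bump function $\lambda$ supported in $\Wman$ and equal to $1$ near $z$, and then verify the conclusions directly. The paper's proof is the single line ``define $\nu(x,\cx)=\mu(x)\widehat{\eta}(\cx)$; it is easy to verify that $\nu$ satisfies the statement''; you have simply written out the verifications (including the parenthetical lifting claim) that the paper leaves to the reader.
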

\begin{proof}
Let $\mu:\Vman\to[0,1]$ be a $\Cinf$ function such that $\mu=1$ on some neighbourhood of $z$ and $\mu=0$ on $\overline{\Vman\setminus\Wman}$.
Define $\nu(x,\cx) = \mu(x)\widehat{\eta}(\cx)$.
Then it is easy to verify that $\nu$ satisfies the statement of our lemma.
\end{proof}

\subsection{Smoothness of shift functions}\label{sect:smmothness_sh_func}
In this section we will assume that $\Vman\subset\Mman$ is an open connected subset such that $\VmanS=\Vman\setminus\FixF$ is connected and the shift map $\ShAVS$ is periodic.
Let $\theta:\VmanS\to(0,+\infty)$ be the positive generator of $\ZidAVS$.

Define the map $\dif:\Vman\to\Mman$ by
\begin{equation}\label{equ:h_F_x_mup}
\dif(x)=\begin{cases} 
 \AFlow(x,\theta(x)/2), & x\in\VmanS=\Vman\setminus\FixF, \\
 x, & x\in\FixF\cap\Vman.
\end{cases}
\end{equation}
Evidently, $\dif$ is continuous on $\VmanS$ but in general is discontinuous on $\FixF\cap\Vman$.
If $x\in\VmanS$, then $\theta(x)=n_{x}\Per(x)$ for some $n_{x}\in\ZZZ$.
Hence $\dif(x)=x$ iff $n_x$ is even, see Figure\;\ref{fig:x_hx}. 
\begin{figure}[ht]
\includegraphics[height=1.5cm]{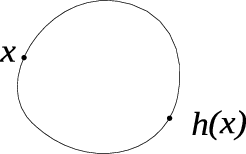}
\caption{$\theta(x)=n_{x}\Per(x)$, $n_x$ is odd}
\protect\label{fig:x_hx}
\end{figure}

\newcommand\COND[1]{$(\mathbf{\mathsf{#1}})$}

\newcommand\CONDhzdiscont{\COND{HD}}
\newcommand\CONDFonejetvanish{\COND{JF0}}
\newcommand\CONDthetainfty{\COND{TI}}

\newcommand\CONDOmegazeroid{\COND{\Omega\Lambda}}
\newcommand\CONDOmegaonejetid{\COND{J\Omega id}}
\newcommand\CONDhcont{\COND{HC}}

\newcommand\CONDPerxiinfty{\COND{PU}}
\newcommand\CONDthetaxiinfty{\COND{TU}}
\newcommand\CONDOmegatinothi{\COND{\Omega H}}
\newcommand\CONDdiamorb{\COND{D}}

\newcommand\CONDLambdatsmooth{\COND{Z}}

\newcommand\metr[2]{\|#1,#2\|}
\newcommand\CL{L}

\begin{theorem}\label{th:smooth_ext_shift_func}
Let $\Omega:\Vman\times \CntParMan\to\Mman$ be a $\dfrmc{\CntParMan}{1}$-deformation in $\imShAV$, $\Lambda_{\org}\in\Ci{\Vman}{\RRR}$ be any shift function for $\Omega_{\org}$, and let also $\Lambda:\VmanS\times \CntParMan\to\RRR$ be a unique continuous extension of $\Lambda_{\org}$ being a shift function for $\Omega$.
Suppose that there exists a point $z\in\FixF\cap\Vman$ satisfying one of the following conditions:
\begin{enumerate}
\item[\CONDhzdiscont]
$\dif$ is discontinuous at $z$;
\item[\CONDFonejetvanish]
 $j^1\AFld(z)=0$;
\item[\CONDthetainfty]
 $\lim_{\VmanS\ni x\to z}\limits\theta(x)=\infty$.
\end{enumerate}
Then
\begin{enumerate}
 \item[\CONDLambdatsmooth] 
for each $\cx\in \CntParMan$ the function $\Lambda_{\cx}:\VmanS\to\RRR$ extends to a $\Cinf$ function on all of $\Vman$ as well, however the induced extension $\Lambda:\Vman\times \CntParMan\to\RRR$ of $\Lambda$ is not claimed to be even continuous.
\end{enumerate}
\end{theorem}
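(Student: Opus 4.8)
The plan is to reduce the statement to the situation treated in Theorem~\ref{th:main_result}, namely to a question purely about a $\dfrmc{\CntParMan}{k}$-deformation in $\imShAV$ whose shift function on $\VmanS$ is the given $\Lambda$, and then to match the three hypotheses \CONDhzdiscont, \CONDFonejetvanish, \CONDthetainfty\ against the three alternatives (a), (b), (c) of that theorem. First I would translate the geometric hypotheses on $z$ into the $\PN$-language: by Lemma~\ref{lm:per-non-per} the hypothesis that $\ShAVS$ is periodic while $\ShAW$ is non-periodic for small $\Wman$ is exactly the statement that $z$ is a $\PN$-point (or that it is \emph{not} a $\PN$-point, if some such $\ShAW$ stays periodic). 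So the argument splits according to whether $z$ is a $\PN$-point.

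Case analysis. If $z$ is \emph{not} a $\PN$-point, then alternative (a) of Theorem~\ref{th:main_result} applies directly, with no use of \CONDhzdiscont--\CONDthetainfty\ and no restriction on $k$; one concludes that $\Lambda_\cx$ extends smoothly near $z$. If $z$ \emph{is} a $\PN$-point, I would use the hypotheses to land in (b) or (c). Under \CONDthetainfty, $\lim_{x\to z}\theta(x)=+\infty$ means precisely that $z$ is a \emph{strong} $\PN$-point, which is alternative (c) — but (c) requires $k\ge1$, which is available since $\Omega$ is a $\dfrmc{\CntParMan}{1}$-deformation. Under \CONDFonejetvanish, $j^1\AFld(z)=0$ is alternative (b), again with $k\ge1$ supplied by the $1$-deformation hypothesis. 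The remaining case is \CONDhzdiscont: here I would argue that discontinuity of $\dif$ at $z$, combined with $z$ being a $\PN$-point, forces one of the previous two situations. Indeed, by Lemma~\ref{lm:suff-cond-for-PN-point}(2) a $\PN$-point at which $\ShAV$ is periodic has $j^1\AFld(z)$ similar to a direct sum of rank-one rotation blocks $\Jord_1(\pm ib_\sigma)$ and zero blocks $\Jord_1(0)$; if there is no rotation block then $j^1\AFld(z)=0$ and we are in (b). If there is a rotation block $\Jord_1(\pm ib_\sigma)$, then near $z$ the flow looks along that plane like a rigid rotation of fixed period $2\pi/|b_\sigma|$, and $\theta$ equals (an integer multiple of) that period on a dense set; I would show that if $\theta$ stayed bounded near $z$ then $\dif$ given by \eqref{equ:h_F_x_mup} would extend continuously across $z$ by the identity (the parity $n_x$ of $\theta(x)=n_x\Per(x)$ would stabilise), contradicting \CONDhzdiscont. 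Hence $\theta$ is unbounded at $z$; pushing a little further (using that along the rotation plane $\Per$ is continuous and bounded, so unboundedness of $\theta$ comes from the degenerate directions and in fact $\theta(x)\to\infty$) gives that $z$ is a strong $\PN$-point, i.e. alternative (c) with $k\ge1$. In every case Theorem~\ref{th:main_result} yields a neighbourhood $\Wman$ of $z$ on which each $\Lambda_\cx$ extends to a $\Cinf$ function with $\Omega_\cx|_\Wman=\ShAW(\Lambda_\cx|_\Wman)$; and since $\FixF$ is nowhere dense in $\Vman$, such a smooth extension of $\Lambda_\cx$ from $\Wman\setminus\FixF$ is unique, so the local extensions at all points of $\FixF\cap\Vman$ are forced to agree with $\Lambda_\cx$ on $\VmanS$ and glue to a global $\Cinf$ function on $\Vman$. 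This proves \CONDLambdatsmooth, with the explicit caveat — already flagged in the statement — that no continuity of the assembled $\Lambda$ on $\Vman\times\CntParMan$ is asserted.

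The main obstacle is the case \CONDhzdiscont\ when $j^1\AFld(z)\ne0$: one must rule out, or rather convert into a strong-$\PN$ situation, the scenario of a $\PN$-point whose linear part has a nontrivial rotation block but where $\theta$ need not a priori tend to $+\infty$ uniformly. The delicate point is that Lemma~\ref{lm:suff-cond-for-PN-point}(3) only guarantees a \emph{sequence} $x_i\to z$ with $\theta(x_i)\to\infty$, not $\lim_{x\to z}\theta(x)=+\infty$; so I must extract from the discontinuity of $\dif$ at $z$ the stronger, uniform blow-up of $\theta$ — essentially showing that the parity of $n_x$ cannot be eventually constant and in fact that $n_x\to\infty$ — which is where the bulk of the genuine work lies. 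Everything else is bookkeeping: the reduction to a deformation in $\imShAV$ with the prescribed shift function, the invocation of Theorem~\ref{th:main_result}, and the nowhere-density argument for uniqueness and gluing.
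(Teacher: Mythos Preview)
Your reduction is circular. In the paper, Theorem~\ref{th:main_result} is stated first but its proof (\S\ref{sect:proof:th:main_result}) invokes Theorem~\ref{th:smooth_ext_shift_func} to handle precisely alternatives (b) and (c): ``the restriction $\Omega|_{\Wman\times\CntParMan}$ satisfies assumptions of Theorem~\ref{th:smooth_ext_shift_func} and condition \CONDFonejetvanish\ (resp.~\CONDthetainfty)''. Only case~(a) of Theorem~\ref{th:main_result} is proved independently. So your plan to deduce \CONDFonejetvanish\ and \CONDthetainfty\ from Theorem~\ref{th:main_result}(b),(c) assumes what is to be proved. Theorem~\ref{th:smooth_ext_shift_func} is the engine, not a corollary.

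Your treatment of \CONDhzdiscont\ is also off target. You try to force a $\PN$-point with discontinuous $\dif$ and $j^1\AFld(z)\neq 0$ into the \emph{strong} $\PN$ case, and you correctly identify this as the hard step; but the paper never attempts this and it is not true in any evident way (Lemma~\ref{lm:suff-cond-for-PN-point}(3) gives only a sequence with $\theta\to\infty$). The paper's argument for \CONDhzdiscont\ is completely different and does not touch $j^1\AFld(z)$ at all: discontinuity of $\dif$ at $z$ directly produces a sequence $x_i\to z$ with $\dif(x_i)$ bounded away from $z$, hence $\Omega_\cx(x_i)\neq\dif(x_i)$ for all $\cx$ and large $i$, and with $\Per(x_i)\to\infty$.

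The actual proof exploits a feature you never use: since $\Omega$ is a deformation \emph{in $\imShAV$} and $\ShAV$ is non-periodic (forced by any of the three hypotheses), each $\Omega_\cx$ has a \emph{unique} $\Cinf$ shift function $\afunc_\cx$ on all of $\Vman$. On $\VmanS$ one has $\Lambda_\cx-\afunc_\cx=k_\cx\theta$ for some integer $k_\cx$, and the whole proof reduces to showing $k_\cx=0$. After normalising so that $\Omega_{\org}=i_\Vman$, $\Lambda_{\org}=0$, and $j^1\Omega_\cx(z)=\id$ (Lemma~\ref{lm:deform_in_imShAV}), each of \CONDhzdiscont, \CONDFonejetvanish, \CONDthetainfty\ yields a sequence $x_i\to z$ with $\theta(x_i)\to\infty$ and $\Omega_\cx(x_i)\neq\dif(x_i)$; the latter bounds $|\Lambda_\cx(x_i)|<\tfrac12\theta(x_i)$, and comparing with $|\afunc_\cx(x_i)|<\tfrac12\theta(x_i)$ forces $k_\cx=0$. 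No gluing over $\FixF\cap\Vman$ is needed: once $k_\cx=0$, $\Lambda_\cx=\afunc_\cx$ globally on $\VmanS$ and the extension to $\Vman$ is just $\afunc_\cx$.
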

\begin{proof}
First notice that if $\nu:\Vman\times \CntParMan\to\RRR$ is any $\dfrmc{\CntParMan}{k}$-deformation in $\Ci{\Vman}{\RRR}$, then the following map
\begin{equation}\label{equ:Omegapr_F_Om__nu}
\Omega':\Vman\times \CntParMan\to \Mman,
\qquad 
\Omega'(x,t) = \AFlow(\Omega(x,\cx),-\nu(x,\cx))
\end{equation}
is a $\dfrmc{\CntParMan}{k}$-deformation as well as $\Omega$.
Moreover, the function 
$$\Lambda'=\Lambda-\nu:(\Vman\setminus\FixF)\times \CntParMan\to\RRR$$
is a shift function for $\Omega'$ on $\Vman\setminus\FixF$ and $\Lambda'_{\org}=\Lambda_{\org}-\nu_{\org}$ extends to a $\Cinf$ function $\afunc_{\org}-\nu_{\org}$ on all of $\Vman$.
Hence $\Lambda'_{\cx}=\Lambda_{\cx}-\nu_{\cx}$ extends to a $\Cinf$ function on $\Vman$ iff so does $\Lambda_{\cx}$.
Thus one may replace $\Omega$ with $\Omega'$.

In particular, we can reduce the problem to the case when $\Omega$ satisfies the following additional assumptions:
\begin{enumerate}
 \item[\CONDOmegazeroid] 
$\Omega_{\org}=i_{\Vman}:\Vman\subset\Mman$ is the identity inclusion and $\Lambda_{\org}\equiv0$.
 \item[\CONDOmegaonejetid]
$j^1\Omega_{\cx}(z)=\id$ for all $\cx\in \CntParMan$.
\end{enumerate}
To establish \CONDOmegazeroid\ put $\nu(x,\cx)=\afunc_{\org}(x)$.
Then $\Lambda'_{\org}=0$, and $\Omega'_{\org}=i_{\Vman}$.

Furthermore, suppose that \CONDOmegazeroid\ holds for $\Omega$.
Since $\Omega(z,\cx)=z$ for all $\cx\in \CntParMan$, we obtain from Lemma\;\ref{lm:deform_in_imShAV} that there exists even a $\dfrmc{\CntParMan}{\infty}$-deformation $\nu:\Vman\times \CntParMan\to\RRR$ such that $\nu_{\org}=0$ and the $\dfrmc{\CntParMan}{1}$-deformation \eqref{equ:Omegapr_F_Om__nu} satisfies \CONDOmegaonejetid.

\medskip

We will now introduce more conditions. 

\begin{enumerate}
 \item[\CONDhcont]
$\dif$ is continuous on some neighbourhood of $z$. 
\end{enumerate}

Let $\{x_i\}_{i\in\NNN}\subset\VmanS$ be a sequence converging to $z$.
Then we also consider the following conditions:

\begin{enumerate}
 \item[\CONDPerxiinfty]
the sequence of periods $\{\Per(x_i)\}_{i\in\NNN}$ is unbounded;
 \item[\CONDthetaxiinfty]
the sequence $\{\theta(x_i)\}_{i\in\NNN}$ is unbounded;
 \item[\CONDOmegatinothi]
$\Omega_{\cx}(x_i)\not=\dif(x_i)$ for all $\cx\in \CntParMan$ and all sufficiently large $i\in\NNN$;
 \item[\CONDdiamorb]
there exists $\CL>0$ and a Riemannian metric $\metr{\cdot}{\cdot}$ on some neighbourhood of $z$ such that 
$$\metr{x_i}{z} <  \CL \cdot  \metr{x_i}{\dif(x_i)},\qquad i\in\NNN.$$
\end{enumerate}

\begin{lemma}\label{lm:implications}
The following implications hold true:

1) \CONDOmegazeroid\ \ $\&$  \ \CONDthetaxiinfty\ \ $\&$ \ \CONDOmegatinothi\ \ $\Rightarrow$ \ \CONDLambdatsmooth;


2) \CONDhzdiscont\ \ $\Rightarrow$ \ $\exists\{x_i\}_{i\in\NNN}$ satisfying $\lim\limits_{i\to\infty} x_i=z$,  \ \CONDPerxiinfty, \ and \ \CONDOmegatinothi;


3) \CONDhcont\ $\Rightarrow$  \ $\exists\{x_i\}_{i\in\NNN}$ satisfying $\lim\limits_{i\to\infty} x_i=z$ \ and \ \CONDdiamorb;


4) \CONDFonejetvanish\ \ $\&$ \ \CONDdiamorb\ \ $\Rightarrow$ \  \CONDPerxiinfty;


5) \CONDhcont\ \ $\&$ \ \CONDOmegaonejetid\ \ $\&$ \ \CONDdiamorb \ $\Rightarrow$ \  \CONDOmegatinothi;


6) 
\CONDthetainfty\ \ $\Rightarrow$ \ \CONDthetaxiinfty.


7) \CONDPerxiinfty\ \ $\Rightarrow$ \ \CONDthetaxiinfty.
\end{lemma}

Assuming that this lemma is proved let us complete Theorem\;\ref{th:smooth_ext_shift_func}.
We have to show that either of the conditions \CONDhzdiscont, \CONDFonejetvanish, or \CONDthetainfty\ together with \CONDOmegazeroid\ and \CONDOmegaonejetid\ implies \CONDLambdatsmooth.

The implication \CONDhzdiscont\ \& \CONDOmegazeroid\ \ $\Rightarrow$ \ \CONDLambdatsmooth\ follows from 2), 7) and 1) of Lemma\;\ref{lm:implications}.

Further, if \CONDhzdiscont\ fail for any $z\in\Vman\cap\FixF$, that is $\dif$ is continuous at each such $z$, then $\dif$ is continuous on all of $\Vman$, so \CONDhcont\ holds true.

Then \CONDhcont\ $\&$  \CONDFonejetvanish\ $\&$ \CONDOmegazeroid\ $\&$ \CONDOmegaonejetid\ $\Rightarrow$ \CONDLambdatsmooth\ 
by 4), 7), 5), and 1) of Lemma\;\ref{lm:implications}.

Moreover, \CONDhcont\ $\&$  \CONDthetainfty\ $\&$ \CONDOmegazeroid\ $\&$ \CONDOmegaonejetid\ $\Rightarrow$ \CONDLambdatsmooth\ by 3), 6), and 1) of Lemma\;\ref{lm:implications}. 

This completes Theorem\;\ref{th:smooth_ext_shift_func} modulo Lemma\;\ref{lm:implications}.
\end{proof}

\subsection{Proof of Lemma\;\ref{lm:implications}}
The implications 6) and 7) are trivial.

1) \CONDOmegazeroid\ \ $\&$  \ \CONDthetaxiinfty\ \ $\&$ \ \CONDOmegatinothi\ \ $\Rightarrow$ \ \CONDLambdatsmooth.

Thus $\Omega_{\org}=i_{\Vman}:\Vman\subset\Mman$, $\Lambda_{\org}\equiv0$, and there exists a sequence $\{x_i\}_{i\in\NNN}$ which converges to some $z\in\FixF\cap\Vman$ and satisfies
$$
\lim\limits_{i\to\infty}\theta(x_i)=+\infty,
\leqno{{\text{\CONDthetaxiinfty}}} 
$$
$$
\Omega_{\cx}(x_i) \not = \dif(x_i), \qquad i\in\NNN, \ \cx\in \CntParMan.
\leqno{{\text{\CONDOmegatinothi}}}  
$$
We have to show that for each $\cx\in \CntParMan$ the function $\Lambda_{\cx}$ extends to a $\Cinf$ function on all of $\Vman$.

Notice that\;\CONDthetaxiinfty\ implies that $\ShAV$ is non-periodic.
Indeed, suppose $\ShAV$ is periodic and let $\nu$ be the positive generator of $\ZidAV$.
Then by Lemma\;\ref{lm:per-non-per} $\nu=\theta$ on $\VmanS$, whence $\theta$ is bounded on $\VmanS$ which contradicts to \CONDthetaxiinfty.

Thus $\ShAV$ is non-periodic, whence for each $\Omega_{\cx}\in\imShAV$, $(\cx\in \CntParMan)$, there exists a unique $\Cinf$ shift function $\afunc_{\cx}\in\Ci{\Vman}{\RRR}$.
In particular, $\Lambda_{\org}=\afunc_{\org}$.
We will show that $\Lambda_{\cx}=\afunc_{\cx}$ on $\VmanS$ for all $\cx\in \CntParMan$.

Since $\afunc_{\cx}|_{\VmanS}$ and $\Lambda_{\cx}$ are shift functions for $\Omega_{\cx}$ on $\VmanS$, we obtain from Lemma\;\ref{lm:ker1} that
$$
\afunc_{\cx}-\Lambda_{\cx}=k_{\cx}\, \theta
$$
on $\VmanS$ for some $k_{\cx}\in\ZZZ$.
It suffices to show that $k_{\cx}=0$.

Condition \CONDOmegatinothi\ means that the image of the map $\omega_i:\CntParMan \to \orb_{x_i}$ defined by $$\omega_i(\cx)=\AFlow(x_i,\Lambda_{\cx}(x_i))=\Omega(x_i,{\cx})$$ does not contain the point $\dif(x_i)$.
Moreover, since $\Omega_{\org}(x_i)=x_i \not=\dif(x_i)$ and $\Lambda_{\org}=0$, it follows from the construction of $\Lambda$ that 
\begin{equation}\label{equ:Lambda_1_2_theta}
|\Lambda_{\cx}(x_i) - \Lambda_{\org}(x_i) | = |\Lambda_{\cx}(x_i) |< \tfrac{1}{2}\, \Per(x_i) \leq \tfrac{1}{2}\theta(x_i)
\end{equation}
for all $\cx\in \CntParMan$ and $i\in\NNN$.

On the other hand we get from \CONDthetaxiinfty\ and continuity of $\afunc_{\cx}$ that
$$
|\afunc_{\cx}(x_i) - \afunc_{\org}(x_i)| = |\afunc_{\cx}(x_i)|  < \tfrac{1}{2}\theta(x_i)
$$
for all sufficiently large $i\in\NNN$.

Hence
\begin{multline*}
k_{\cx}\theta(x_i) = |\Lambda_{\cx}(x_i) - \afunc_{\cx}(x_i) | \leq	 
|\Lambda_{\cx}(x_i)| + |\afunc_{\cx}(x_i)| < \\ < \tfrac{1}{2}\Per(x_i) + \tfrac{1}{2}\theta(x_i) \leq \theta(x_i).
\end{multline*}
This implies $k_{\cx}=0$ and thus $\Lambda_{\cx}=\afunc_{\cx}$ on $\VmanS$.

\medskip 

2) \CONDhzdiscont\ \ $\Rightarrow$ \ $\exists\{x_i\}_{i\in\NNN}$ satisfying $\lim\limits_{i\to\infty} x_i=z$, \ \CONDPerxiinfty, \ and \ \CONDOmegatinothi.

Discontinuity of $\dif$ (that is property \CONDhzdiscont) at $z$ means that there exists a sequence $\{x_i\}_{i\in\NNN}$ converging to $z$ such that $\{\dif(x_i)\}_{i\in\NNN}$ lays outside some neighbourhood $\Wman$ of $z$.

\CONDOmegatinothi.
Since $\Omega(z\times\CntParMan)=z$ and $\CntParMan$ is compact, we can assume that $\{\Omega_{\cx}(x_i)\}_{i\in\NNN}\subset\Wman$ for all $i\in\NNN$ and $\cx\in \CntParMan$, therefore $\Omega_{\cx}(x_i)\not=\dif(x_i)$.

\CONDPerxiinfty.
Suppose \CONDPerxiinfty\ fails.
Then there exists $C>0$ such that $\Per(x_i)<C$ for all $i$.
Since $z$ is a fixed point of $\AFlow$, there exists another neighbourhood $\Uman$ of $z$ such that $\AFlow(\Uman\times[0,C])\subset\Wman$.
Therefore (passing to a subsequence) we can assume that $x_i\in\Uman$ for all $i\in\NNN$, hence
$$\dif(x_i) \ = \ \AFlow(x_i,\Per(x_i)/2) \ \in \ \AFlow(\Uman\times[0,C]) \ \subset \ \Wman$$
which contradicts to the assumption $\dif(x_i)\not\in\Wman$.

\medskip 

3) \CONDhcont\ $\Rightarrow$  \ $\exists\{x_i\}_{i\in\NNN}$ satisfying $\lim\limits_{i\to\infty} x_i=z$ \ and \ \CONDdiamorb.

Thus we have that $\dif$ is continuous on some neighbourhood $\Wman\subset\Vman$ of $z$.
Our aim is to show that there exist 
\begin{itemize}
\item
a neighbourhood $\Uman\subset\Wman$ of $z$ such that $\dif(\Uman)=\Uman$, 
\item
a Riemannian metric $d$ on $\Uman$,
\item 
a sequence $\{x_i\}_{i\in\NNN} \subset\Uman\setminus\FixF$ converging to $z$, and 
\item 
a number $\CL>0$
\end{itemize}
satisfying
$$ 
 \metr{z}{x_i} < \CL \cdot  \metr{x_i}{\dif(x_i)}.
\leqno{\text{\CONDdiamorb}}
$$

First suppose that $z\in\Int\Mman$.
Then we can assume that $\Wman$ is an open subset of $\RRR^n$ and $d$ is the induced Euclidean metric on $\Wman$.
Since $\dif$ is continuous on $\Wman$ and $\dif(z)=z$, we can find an open ball $B_s\subset\Wman$ of some radius $s>0$ centered at $z$ such that $\dif(B_s)\subset\Wman$.
Then
$$ \Uman_s := B_s \cup \dif(B_s) \ \subset \ \Wman.$$
By assumption $\VmanS$ is connected, therefore by (3) of Theorem\;\ref{th:ker2} the restriction of $\theta$ to $\orb_x\cap\VmanS$ is constant for any $x\in\VmanS$.
This easily implies that 
$$\dif^{2}(x)=\AFlow(x, \theta(x)) =x, \qquad x\in B_s,$$
so $\dif$ yields a $\ZZZ_2$-action on $\Uman_s$.
Due to (2) of Theorem\;\ref{th:ker2} this action is non-trivial and therefore effective.

By a well known theorem of M.\;Newman\;\cite{Newman:QJM:1931} all the orbits of a $\ZZZ_2$-action on a manifold $\Uman_s$ can not be arbitrary small.
D.\;Hoffman and L.\;N.\;Mann\;\cite[Th.\;1]{HoffmanMann:PAMS:76}, using a result of A.\;Dress\;\cite{Dress:Topol:1969}, obtained lower bounds for diameters of such actions.
Denote by $\bar r$ the radius of convexity of $\Uman_s$ at $z$.
Since $B_s\subset\Uman_s$, it follows that $s\leq \bar r$. 
Then it was shown in\;\cite[Th.\;1]{HoffmanMann:PAMS:76} that there exists an orbit or this action of diameter greater than $s/2$, that is 
$$
 \metr{x_s}{\dif(x_s)}> s/2
$$
for some $x_s\in\Uman_s$.
Interchanging $x_s$ and $\dif(x_s)$, if necessary, we can assume that in fact $x_s\in B_s$.
Hence 
\begin{equation}\label{equ:d_z_xs__T_d_xs_hasxs}
\metr{z}{x_s} < s < \CL \cdot \metr{x_s}{\dif(x_s)}, \qquad \text{where $L=2$.}
\end{equation}

Decreasing $s$ to $0$ we will find a sequence $\{x_{s_i}\}_{i\in\NNN}$ satisfying \CONDdiamorb.

A more deep analysis of the proofs of\;\cite[Th.\;1]{HoffmanMann:PAMS:76} and\;\cite[Lm.\;3]{Dress:Topol:1969} shows that similar estimations but with another constant $\CL$ hold in the case when $z\in\partial\Mman$, see\;\cite{Maks:period_functions} for details.

\medskip 

4) \CONDFonejetvanish\ \ $\&$ \ \CONDdiamorb\ \ $\Rightarrow$ \  \CONDPerxiinfty.

Thus $j^1\AFld(z)=0$ and we have a sequence $\{x_i\}_{i\in\NNN}$ converging to $z$ and satisfying \CONDdiamorb.
We will show that there is subsequence $\{x_{i_k}\}_{k\in\NNN}$ such that $\lim\limits_{k\to\infty}\Per(x_{i_k})=+\infty$.

Suppose that there exists $C>0$ such that $\Per(x_i)<C$ for all $i$.
Since $z$ is a fixed point of $\AFlow$, we can decrease $\Uman$ and assume that $\AFlow(\Uman\times[0,C])\subset\Vman$.
Moreover, as $j^1\AFld(z)=0$, we can also suppose that there exists $A>0$ such that 
$$|\AFld(x)|\leq A \cdot \metr{z}{x}^2, \quad \text{for all $x\in\Uman$}.$$

Notice that the length $l(x_i)$ of the orbit of $x_i$ can be calculated by the following formula:
$$ l(x_i) = \int_{0}^{\Per(x_i)}\,|\AFld(\AFlow(x_i,t))|dt,$$
whence 
$$l(x_i) \leq \Per(x) \cdot \sup_{t\in[0,\Per(x)]} \, |\AFld(\AFlow(x,t))| \leq C \cdot A\cdot \metr{z}{x_i}^2.$$
Therefore
\begin{multline*}
\metr{z}{x_i}  < \CL \cdot \metr{z}{\dif(x_i)} \leq \CL \cdot l(x_i) \leq \CL \cdot C \cdot A\cdot \metr{z}{x_i}^2,
\end{multline*}
whence $0< \tfrac{1}{\CL\cdot C \cdot A} \leq \metr{z}{x_i}$, which contradicts to the assumption that $\{x_i\}_{i\in\NNN}$ converges to $z$.

\medskip 

5) \CONDOmegaonejetid\ \ $\&$ \ \CONDdiamorb \ $\Rightarrow$ \  \CONDOmegatinothi;

Thus $j^1\Omega_{\cx}(z)=\id$ for all $\cx\in \CntParMan$ and there is a sequence $\{x_i\}_{i\in\NNN}$ converging to $z$ and satisfying $\metr{z}{x_i}< \CL \cdot \metr{x_i}{\dif(x_i)}$ for some $\CL>0$.
We have to show that $\Omega_{\cx}(x_i)\not=\dif(x_i)$ for all $\cx\in \CntParMan$ and all sufficiently large $i$.

Since $\Omega$ is a $\dfrmc{\CntParMan}{1}$-deformation, it follows from \CONDOmegaonejetid\ that there exists a continuous function $\gamma:\Uman\to\RRR$ such that $\gamma(z)=0$ and 
$$
\metr{x}{\Omega_{\cx}(x)} \leq \gamma(x) \cdot \metr{z}{x}
$$
for all $(x,\cx)\in \Uman\times \CntParMan$, see\;\cite{Maks:period_functions}.
If $\Omega$ were a $\Cont{2}$ map or at least a $\dfrmc{\CntParMan}{2}$-deformation, we would have a usual estimation:
$$
\metr{x}{\Omega_{\cx}(x)} \leq C \cdot \metr{z}{x}^2
$$
with some constant $C>0$.

Then 
\begin{multline*}
\metr{\dif(x_i)}{\Omega_{\cx}(x_i)} \geq 
\metr{x_i}{\dif(x_i)} -
\metr{x_i}{\Omega_{\cx}(x_i)} > \\ >
\tfrac{1}{\CL}\, \metr{z}{x_i} - \gamma(x_i,t) \cdot \metr{z}{x_i}  =
\bigl(\tfrac{1}{\CL}-\gamma(x_i)\bigr) \cdot \metr{z}{x_i}.
\end{multline*}
Notice that $\lim\limits_{i\to\infty}\gamma(x_i)=\gamma(z)=0$.
Therefore (passing if necessary to a subsequence) we can assume that $\tfrac{1}{\CL}-\gamma(x_i,\cx)>0$ for all $i$, whence $\metr{\dif(x_i)}{\Omega_{\cx}(x_i)}>0$.
This means that $\Omega_{\cx}(x_i)\not=\dif(x_i)$.

Lemma\;\ref{lm:implications} is completed.

\subsection{Proof of Theorem\;\ref{th:main_result}}
\label{sect:proof:th:main_result}
Let $\Vman$ be a neighbourhood of $z$ such that $\FixF$ is nowhere dense in $\Vman$, $\Omega:\Vman\times \CntParMan\to\Mman$ be a $\dfrmc{\CntParMan}{k}$-deformation in $\imShAV$, and $\Lambda:\VmanS\times \CntParMan\to\Mman$ be a continuous shift function for $\Omega$ such that $\Lambda_{\org}$ extends to a $\Cinf$ function $\afunc$ on all of $\Vman$.
We have to prove that there exists a neighbourhood $\Wman$ of $z$ such that for each $\cx\in \CntParMan$ the function $\Lambda_{\cx}$ extends to a $\Cinf$ function on $\Wman$.

By assumption $\Omega_{\cx}\in\imShAV$, so $\Omega_{\cx}=\ShAV(\afunc_{\cx})$ for some $\afunc_{\cx}\in\Ci{\Vman}{\RRR}$.
Hence $\Lambda_{\cx} = \afunc_{\cx} + \mu_{\cx}$ on $\VmanS$ for some $\mu_{\cx}\in\ZidAVS$.

\smallskip 

(a) Suppose $z$ is not a $\PN$-point.
Then we can find a neighbourhood $\Wman$ of $z$ such that either 
\begin{itemize}
 \item[(a$'$)]
$\ShAWS$ is non-periodic, or
 \item[(a$''$)]
both $\ShAW$ and $\ShAWS$ are periodic,
\end{itemize}
where $\WmanS=\Wman\setminus\FixF$.

In the case (a$'$) we have $\ZidAWS=\{0\}$.
In particular, $\mu_{\cx}=0$, and therefore $\Lambda_{\cx} = \afunc_{\cx}$ on $\WmanS$. 
Hence $\Lambda_{\cx}$ extends to a $\Cinf$ function $\afunc_{\cx}$ on $\Wman$.

In the case (a$''$) due to Lemma\;\ref{lm:per-non-per} we have that $\ZidAW=\{n\theta\}_{n\in\ZZZ}$ for some $\theta\in\Ci{\Wman}{\RRR}$ and $\ZidAWS=\{n\theta|_{\WmanS}\}_{n\in\ZZZ}$.
Hence $\mu_{\cx}=k_{\cx}\theta|_{\WmanS}$ for some $k_{\cx}\in\ZZZ$.
Therefore again $\Lambda_{\cx}$ extends to a $\Cinf$ function $\afunc_{\cx}+k_{\cx} \theta$ on all of $\Wman$.

Finally, in the cases (b) and (c) $z$ is $\PN$-point, so there exists a neighbourhood $\Wman$ of $z$ such that $\ShAW$ is non-periodic, while $\ShAWS$ is periodic.
Then in the case (b) (resp. (c)) the restriction $\Omega|_{\Wman\times \CntParMan}$ satisfies assumptions of Theorem\;\ref{th:smooth_ext_shift_func} and condition \CONDFonejetvanish\ (resp. \CONDthetainfty).
Hence for each $\cx\in \CntParMan$ the function $\Lambda_{\cx}$ also smoothly extends to all of $\Wman$.
\qed

\section{Decreasing $\Vman$}\label{sect:desreasing_V}
The following lemma shows that the relation $\imShAV=\EidAV{k}$ is preserved if $\Vman$ is properly decreased.
\begin{lemma}\label{lm:reduce_imShV_EidAVk}
Let $\Vman \subset \Mman$ be an open connected subset such that $\Vman\cap\FixF$ is nowhere dense in $\Vman$ and $\imShAV=\EidAV{k}$ for some $k\geq0$.
Let also $\Wman\subset\Vman$ be an open connected subset such that $\Wman\cap\FixF$ is \myemph{closed\/} (and so \myemph{open-closed}) in $\Vman\cap\FixF$.
Then $\imShAW=\EidAW{k}$.
\end{lemma}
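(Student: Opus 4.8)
Since $\imShAW\subset\EidAW{k}$ always holds, only the inclusion $\EidAW{k}\subset\imShAW$ needs proof; throughout I assume, as one may after a reparametrization (Lemma~\ref{lm:Sh-EA-under-reparametrization}), that $\AFld$ generates a global flow. Fix $\dif\in\EidAW{k}$ and a $k$-homotopy $\Omega\colon\Wman\times I\to\Mman$ in $\EAW$ with $\Omega_0=i_{\Wman}$ and $\Omega_1=\dif$. The plan is to (i) extend $\Omega$ beyond $\Wman$ to a $k$-homotopy $\tilde\Omega$ in $\EAV$ which is the identity near $(\FixF\cap\Vman)\setminus(\FixF\cap\Wman)$; (ii) apply the hypothesis $\imShAV=\EidAV{k}$ to $\tilde\dif:=\tilde\Omega_1\in\EidAV{k}$ to obtain a globally smooth shift function for $\tilde\dif$ on $\Vman$; and (iii) combine it with the canonical shift function of $\dif$ on $\Wman\setminus\FixF$ to build a smooth shift function for $\dif$ on all of $\Wman$.

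For (i): the hypothesis that $\FixF\cap\Wman$ is closed in $\FixF\cap\Vman$, together with its being open there (because $\Wman$ is open in $\Vman$), makes $\FixF\cap\Wman$ a closed subset of $\Vman$ disjoint from $(\FixF\cap\Vman)\setminus(\FixF\cap\Wman)$, hence in particular a closed subset of the open set $\Wman$. Using normality of $\Vman$ I would choose open sets $\FixF\cap\Wman\subset\Uman_2\subset\overline{\Uman_2}\subset\Uman_1\subset\overline{\Uman_1}\subset\Uman_0\subset\overline{\Uman_0}\subset\Wman$ and a smooth $\mufunc\colon\Vman\to[0,1]$ with $\mufunc\equiv1$ on $\Uman_1$ and $\supp\mufunc\subset\Uman_0$. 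On $\WmanS:=\Wman\setminus\FixF$ let $\Lambda\colon\WmanS\times I\to\RRR$ be the unique continuous shift function for $\Omega|_{\WmanS\times I}$ with $\Lambda_0\equiv0$, which exists by Corollary~\ref{cor:shift-function-for-deformations} (since $\pi_1(I)=0$) and is a $k$-homotopy by Theorem~\ref{th:shift-function-for-deformations}; put $\lambda:=\Lambda_1$. Extending $\mufunc\Lambda$ by zero to a smooth $k$-homotopy on $(\Vman\setminus(\FixF\cap\Wman))\times I$, I would set
\[
\tilde\Omega(x,t)=
\begin{cases}
\Omega(x,t), & x\in\Uman_1,\\
\AFlow\bigl(x,\mufunc(x)\Lambda(x,t)\bigr), & x\in\Vman\setminus\overline{\Uman_2}.
\end{cases}
\]
The two formulas agree where $\mufunc\equiv1$ and their domains cover $\Vman$, so $\tilde\Omega$ is well defined; $\tilde\Omega_t$ lies in $\EAV$ because it coincides with $\Omega_t\in\EAW$ near $\FixF\cap\Wman$ and with the identity near $(\FixF\cap\Vman)\setminus(\FixF\cap\Wman)$; $\tilde\Omega$ is a $k$-homotopy because $\Omega$ and $\Lambda$ are $k$-homotopies and composing with the flow preserves this; and $\tilde\Omega_0=i_{\Vman}$.

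For (ii)--(iii): put $\tilde\dif:=\tilde\Omega_1\in\EidAV{k}=\imShAV$, so $\tilde\dif=\AFlow(\cdot,\bfunc)$ for some $\bfunc\in\Ci{\Vman}{\RRR}$; since $\tilde\dif=\dif$ on $\Uman_1$, the restriction $\bfunc|_{\Uman_1}$ is a smooth shift function for $\dif$ on $\Uman_1$. On $\WmanS$ one has $\tilde\dif=\AFlow(\cdot,\mufunc\lambda)$, so on each connected component $\WmanS_{j}$ of $\WmanS$ both $\bfunc|_{\WmanS_{j}}$ and $(\mufunc\lambda)|_{\WmanS_{j}}$ are shift functions for $\tilde\dif$; by Lemma~\ref{lm:ker1} their difference lies in $\ker(\ShA_{\WmanS_{j}})$, which by Theorem~\ref{th:ker2} (applied to the connected open set $\WmanS_{j}$) is cyclic, generated by some $\theta_{j}\geq0$ — with $\theta_{j}\equiv0$ in the non-periodic case and $\theta_{j}$ the positive generator in the periodic case. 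Thus $\bfunc-\mufunc\lambda=n_{j}\theta_{j}$ on $\WmanS_{j}$ for some $n_{j}\in\ZZZ$, whence $\lambda=\bfunc-n_{j}\theta_{j}$ on $\Uman_1\cap\WmanS_{j}$ (where $\mufunc\equiv1$). Now define $\afunc\colon\Wman\to\RRR$ by $\afunc:=\bfunc$ on $\Uman_1$ and $\afunc:=\lambda+n_{j}\theta_{j}$ on $\WmanS_{j}$; these prescriptions agree on $\Uman_1\cap\WmanS_{j}$ (and distinct components are disjoint), the open sets $\Uman_1$ and $\{\WmanS_{j}\}_{j}$ cover $\Wman$, and $\afunc$ is smooth — smoothness across $\FixF\cap\Wman$ coming from $\bfunc$ being smooth on $\Vman\supset\Uman_1$, and smoothness on each $\WmanS_{j}$ from that of $\lambda$ and $\theta_{j}$. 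Finally $\afunc$ is a shift function for $\dif$ on all of $\Wman$ — on $\Uman_1$ because $\dif=\tilde\dif=\AFlow(\cdot,\bfunc)$, and on $\WmanS_{j}$ because $n_{j}\theta_{j}\in\ker(\ShA_{\WmanS_{j}})$ and $\dif=\AFlow(\cdot,\lambda)$ there (Lemma~\ref{lm:ker1}) — so $\dif=\ShAW(\afunc)\in\imShAW$.

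The principal obstacle is step (i): the extension must be a genuine $k$-homotopy (not merely a $0$-homotopy) in $\EAV$, which is why it is built from the shift function $\Lambda$ — whose existence on $\WmanS$ is precisely Corollary~\ref{cor:shift-function-for-deformations} — damped by a cutoff supported inside $\Wman$, rather than from $\Omega$ directly. The other point worth emphasizing is that one should \emph{not} try to extend the canonical shift function $\lambda$ smoothly across $\FixF$, which it may genuinely fail to do (e.g.\ at $\PN$-points); instead the hypothesis $\imShAV=\EidAV{k}$ already supplies a smooth shift function near $\FixF\cap\Wman$, and the only remaining freedom on each component of $\WmanS$ is an element of a cyclic kernel, absorbed component by component.
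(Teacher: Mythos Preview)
Your proof is correct and follows essentially the same approach as the paper's: damp the canonical shift function $\Lambda$ on $\Wman\setminus\FixF$ by a cutoff supported in $\Wman$, push it forward through the flow to obtain a $k$-homotopy on all of $\Vman$, invoke the hypothesis $\imShAV=\EidAV{k}$ to get a globally smooth shift function $\bfunc$, and then correct $\lambda$ by an element of the kernel to match $\bfunc$ near $\FixF\cap\Wman$. The paper carries out the comparison on $\Vman\setminus\FixF$ and splits into three cases according to whether $\ShAV$ and $\ShAVS$ are periodic, whereas you work directly on connected components $\WmanS_j$ of $\Wman\setminus\FixF$ and absorb the ambiguity componentwise via Theorem~\ref{th:ker2}; this is arguably a bit more careful, since Theorem~\ref{th:ker2} requires connectedness of the domain.
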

\begin{proof}
Let $\Omega:\Wman\times I \to \Mman$ be a $k$-homotopy such that $\Omega_0=i_{\Wman}:\Wman\subset\Mman$ is the identity inclusion.
We have to show that for every $t\in I$ there exists a $\Cinf$ function $\afunc_{t}:\Wman\to\RRR$ such that $\dif=\ShAW(\afunc)$.

Let $\Lambda:(\Wman\setminus\FixF)\times I\to\RRR$ be a unique $k$-homotopy being a shift function for $\Omega$ and such that $\Lambda_{0}\equiv0$.
We will show that every $\Lambda_{t}$ either extends to a $\Cinf$ function on all of $\Wman$, or can be replaced with some other shift function of $\Omega_{0}$ which smoothly extends to $\Wman$.

First we extend $\Lambda$ to $(\Vman\setminus\FixF)\times I$ changing it outside $(\Wman\cap\FixF)\times I$.

Since $\Wman\cap\FixF$ is open-closed in $\Vman\cap\FixF$, there exists a $\Cinf$ function $\mu:\Vman\to[0,1]$ such that $\mu=1$ on some open neighbourhood $\Nman\subset\Vman$ of $\Wman\cap\FixF$ and $\mu=0$ on some open neighbourhood of $\Vman\setminus\Wman$, see Figure\;\ref{fig:vw}.
\begin{figure}[ht]
\includegraphics[height=2cm]{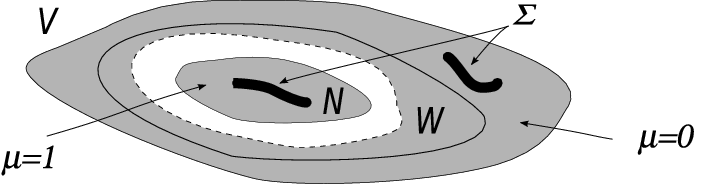}
\protect\caption{}\label{fig:vw}
\end{figure}
Then it follows from Lemma\;\ref{lm:change_deform} that a function $\Lambda':(\Vman\setminus\FixF)\times I\to\RRR$ defined by
$$
\Lambda'(x,t)=
\left\{
\begin{array}{cl}
\mu(x)\Lambda(x,t), & x\in\Wman\setminus\FixF, \\
0,                  & x\in\Vman\setminus\Wman
\end{array}
\right.
$$
is a $k$-homotopy which coincides with $\Lambda$ on $(\Nman\setminus\FixF)\times I$.
Then the following map $\Omega':\Vman\times I\to\Mman$ defined by
$$
\Omega'(x,t)=
\left\{
\begin{array}{cl}
\AFlow(x,\Lambda'(x,t)), & x\in\Vman\setminus(\Wman\cap\FixF), \\
x, & x\in\Wman\cap\FixF
\end{array}
\right.
$$
is a $k$-homotopy in $\EidAV{k}$ which coincides with $\Omega$ on $\Nman\times I$.

Since $\imShAV=\EidAV{k}$, it follows that $\Omega'_{t}$ has a $\Cinf$ shift function $\afunc'_{t}$ on $\Vman$.
Thus 
$$
\Omega'_{t}(x) = \AFlow(x,\mu(x)\Lambda_{t}(x)) = \AFlow(x,\afunc'_{t}(x)),
\qquad x\in\Vman\setminus\FixF.
$$
Denote $\VmanS=\Vman\setminus\FixF$.
Then $\mu\Lambda_{t}$ is a shift function for $\Omega'_{t}$ on $\VmanS$, while $\afunc'_{t}$ is a shift function for $\Omega'_{t}$ on $\Vman$.
Consider three cases.

1) If $\ShAV$ and $\ShAVS$ are non-periodic, then it follows from Lemma\;\ref{lm:ker1} that $\afunc'_{t}=\Lambda'_{t}=\mu\Lambda_{t}$ on $\VmanS$.
Since $\mu=1$ near $\Wman\cap\FixF$ and $\Lambda_{t}$ is smooth outside this set, we put $\Lambda_{t}=\afunc_t$ on $\Wman\cap\FixF$.
Then $\Lambda_t$ becomes smooth on all of $\Wman$.

2) If both $\ShAV$ and $\ShAVS$ are periodic, then by Lemmas\;\ref{lm:ker1} and\;\ref{lm:per-non-per} $\mu\Lambda_{t}=\afunc'_{t}+ l_{t}\theta$ on $\VmanS$, where $\theta:\Vman\to(0,\infty)$ is the positive generator of the kernel of $\ShAV$ and $l_{t}\in\ZZZ$. 
Then similarly to the case 1) $\Lambda_{t}$ smoothly extends to all of $\Wman$.

3) Suppose that $\ShAVS$ is periodic while $\ShAV$ is non-periodic, and let $\theta:\VmanS\to(0,\infty)$ be the positive generator of the kernel of $\ShAVS$.
Then $\afunc'_{t}=\mu\Lambda_{t}-l_{t}\theta$ on $\VmanS$ for some $l_{t}\in\ZZZ$.
Again $\afunc_t=\Lambda_{t}-l_{t}\theta$ smoothly extends to all of $\Wman$.

We claim that $\afunc_t$ is a shift function for $\Omega_{t}$.
Indeed, we have that $\Omega_{t}(x)=\AFlow_{t}(x,\Lambda_{t}(x))$ and $\AFlow(x,\theta(x))=\AFlow(x,-l_{t}\,\theta(x))=x$, whence
$$
\AFlow(x,\Lambda_{t}(x)-l_{t}\theta(x))=
\AFlow\bigl(\AFlow(x,-l_{t}\theta(x)), \Lambda_{t}(x) \bigr)=
\AFlow(x, \Lambda_{t}(x))=\Omega_{t}(x).
$$ 
Thus $\Omega_{t}=\ShAW(\Lambda_{t}-l_{t}\theta) \in \imShAW$.
\end{proof}

\begin{corollary}\label{cor:reduce_imShV_EidAVk}
Suppose $z$ is an isolated singular point of $\AFld$, and let $\Vman$ be a connected, open neighbourhood of $z$ such that $\Vman\cap\FixF=\{z\}$ and $\imShAV=\EidAV{k}$ for some $k\geq0$.
Then $\imShAW=\EidAW{k}$ for any other connected, open neighbourhood $\Wman\subset\Vman$ of $z$.
\end{corollary}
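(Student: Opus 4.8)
The plan is to deduce the corollary directly from Lemma~\ref{lm:reduce_imShV_EidAVk}, whose hypotheses become automatic once $z$ is an isolated singular point and $\Vman\cap\FixF=\{z\}$. So the proof amounts to checking two things: that $\Vman\cap\FixF$ is nowhere dense in $\Vman$, and that $\Wman\cap\FixF$ is closed (hence open-closed) in $\Vman\cap\FixF$ for every connected open neighbourhood $\Wman\subset\Vman$ of $z$.

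For the first point, $\Vman\cap\FixF=\{z\}$ is a single point. If this set were not nowhere dense in $\Vman$, it would be open in $\Vman$, i.e. $z$ would be an isolated point of $\Vman$; since $\Vman$ is connected this forces $\Vman=\{z\}$, in which case the only admissible $\Wman$ is $\Vman$ itself and the corollary is trivial. Hence we may assume $\{z\}$ is nowhere dense in $\Vman$, which is exactly the density hypothesis required by Lemma~\ref{lm:reduce_imShV_EidAVk}; the identity $\imShAV=\EidAV{k}$ is part of our assumptions.

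For the second point, let $\Wman\subset\Vman$ be any connected open neighbourhood of $z$. Then $z\in\Wman$, so $\Wman\cap\FixF=\Wman\cap\{z\}=\{z\}=\Vman\cap\FixF$. Thus $\Wman\cap\FixF$ equals $\Vman\cap\FixF$ and is therefore trivially closed (and open) in $\Vman\cap\FixF$. So $\Wman$ meets the hypotheses of Lemma~\ref{lm:reduce_imShV_EidAVk}, and we conclude $\imShAW=\EidAW{k}$.

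There is no real obstacle here: all the substance lies in Lemma~\ref{lm:reduce_imShV_EidAVk}, and the only observation needed is that isolatedness of $z$ forces $\Wman\cap\FixF$ to coincide with the whole of $\Vman\cap\FixF$, so that the ``open-closed'' condition holds for free.
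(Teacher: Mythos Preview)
Your proof is correct and is exactly the intended argument: the corollary is meant to follow immediately from Lemma~\ref{lm:reduce_imShV_EidAVk}, and you have verified that its hypotheses are automatically satisfied when $\Vman\cap\FixF=\{z\}$ and $\Wman\ni z$. The paper itself gives no separate proof for this corollary, so your write-up is simply making explicit what the author leaves implicit.
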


\section{Regular extensions}\label{sect:regular-extensions}
For every $m\in\NNN$ let $\om\in\RRR^{m}$ be the origin.
Then $\omn=(\om,\on)$ if we regard $\RRR^{m+n}$ as $\RRR^{m}\times\RRR^{n}$.
Denote by $p_m:\RRR^{m+n}\to\RRR^{m}$ the natural projection to the first $m$ coordinates.

Let $\AFld$ be a germ of a vector field at $\omn$ on $\RRR^{m+n}$, and $\BFld$ be a germ of a vector field on $\RRR^{m} \equiv \RRR^{m} \times \on \subset \RRR^{m+n}$ at $\om$.
Say that $\AFld$ is a \myemph{a regular extension} of $\BFld$, if 
\begin{equation}\label{equ:Fxy__Gx_Hxy}
\AFld(x,y)=(\BFld(x),\HFld(x,y))
\end{equation}
for some $\Cinf$ germ $\HFld:\RRR^{m+n}\to\RRR^{n}$ at $\omn$.
Thus the first $m$ coordinate functions of $\AFld$ coincide with $\BFld$ and, in particular, they do not depend on the remaining $n$ coordinates.
In this case the local flows of $\AFld$ and $\BFld$ are related by the following identity:
$$
\AFlow(x,y,t)=(\BFlow(x,t), \HFlow(x,y,t)),
$$
for some $\Cinf$ map $\HFlow$.

Denote by $\FixA$ and $\FixB$ the sets of singular points of $\AFld$ and $\BFld$ respectively.
Then it follows from~\eqref{equ:Fxy__Gx_Hxy}
\begin{equation}\label{equ:FixA_FixB_for reg_ext}
 \FixA \ \subset \ \FixB \times \Vmann.
\end{equation}
Indeed, if $\AFld(x,y)=(\BFld(x),\HFld(x,y))=0$, then $\BFld(x)=0$.

Notice that if $\HFld$ is a germ of a vector field on $\RRR^{n}$ at $\on$, then the \myemph{product} $\AFld(x,y)=(\BFld(x),\HFld(y))$ of $\BFld$ and $\HFld$ is a regular extension of each of these vector fields.
Also notice that a regular extension of a regular extension is a regular extension itself.

It follows from the real version of Jordan's theorem about normal forms of matrices that every linear vector field on $\RRR^{n}$ is a regular extension of a linear vector field defined by one of the following matrices:
\begin{equation}\label{equ:minimal_irreducible_lin}
\left(\begin{smallmatrix} \lambda \end{smallmatrix}\right),
\qquad 
\left(\begin{smallmatrix} 0 & 1 \\ 0 & 0 \end{smallmatrix}\right),
\qquad 
\left(\begin{smallmatrix} a & b \\ -b & a \end{smallmatrix}\right),
\qquad \lambda,a,b\in\RRR.
\end{equation}

\begin{lemma}\label{lm:pmOmega_is_deformation}
Let $\Vmanm\subset\RRR^{m}$ and $\Vmann\subset\RRR^{n}$ be open connected subsets, $\Vmanmn=\Vmanm\times\Vmann$, $\SmParMan$ be a connected smooth manifold with $\dim\SmParMan=\dd$, $\CntParMan$ be a path-connected topological space,
\begin{equation}\label{equ:Omega_deformation_mn}
\Omega:\Vmanmn\times\SmParMan\times\CntParMan\to\RRR^{m}\times\RRR^{n}
\end{equation}
be an $\dfrm{\SmParMan}{\CntParMan}{k}$-deformation in $\EAVmn$, and  
$$
\mOmega = p_{m}\circ\Omega:\Vmanm\times\Vmann\times\SmParMan\times\CntParMan\to\RRR^{m}
$$
be the projection to the first $m$ coordinates.
Then the following statements hold true.

{\rm1)} $\mOmega$ is a $\dfrm{\Vmann\times\SmParMan}{\CntParMan}{k}$-deformation in $\EBVm$.

{\rm2)} Suppose that $\Omega$ has a continuous shift function 
\begin{equation}\label{equ:Lambda_sh_func}
\Lambda:\Vmanmn\times\SmParMan\times\CntParMan \supset A \longrightarrow\RRR,
\end{equation}
defined on some subset $A$ of $\Vmanmn\times\SmParMan\times\CntParMan$.
Then $\Lambda$ is a shift function for $\mOmega$ with respect to $\BFld$.
\end{lemma}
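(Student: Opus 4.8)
The plan is to verify that $\mOmega$ satisfies the two conditions of Definition~\ref{defn:deformation} with $\Vmann\times\SmParMan$ as smooth parameter space and $\CntParMan$ as topological parameter space, and then to read off part~2 from the flow identity for a regular extension. Throughout I shall use that, by~\eqref{equ:Fxy__Gx_Hxy} and the remark after it, $\AFlow(x,y,t)=(\BFlow(x,t),\HFlow(x,y,t))$, so $p_{m}\circ\AFlow((x,y),\cdot)=\BFlow(x,\cdot)$; in particular the interval of definition of $\AFlow_{(x,y)}$ is contained in that of $\BFlow_{x}$, whence $p_{m}(\orb_{(x,y)})\subset\orb^{\BFld}_{x}$, where $\orb^{\BFld}_{x}$ is the $\BFld$-orbit of $x$. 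Condition~(2) of Definition~\ref{defn:deformation} for $\mOmega$ is routine: $\mOmega_{\cx}=p_{m}\circ\Omega_{\cx}$ is $\Cinf$ because $\Omega_{\cx}$ is, and $j^{k}\mOmega_{\cx}=(p_{m})_{*}\circ j^{k}\Omega_{\cx}$ for the continuous map $(p_{m})_{*}$ on $k$-jet spaces induced by the linear projection $p_{m}$, so continuity of $(x,y,\sx,\cx)\mapsto j^{k}\mOmega_{\cx}(x,y,\sx)$ follows from that for $\Omega$; this is an instance of Lemma~\ref{lm:change_deform} with $u=p_{m}$ (after regarding $\Vmann$ as part of the smooth parameter space), and continuity of $\mOmega=p_{m}\circ\Omega$ is clear.

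The substance of part~1 is condition~(1). Fix $(y,\sx,\cx)$ and set $g=\Omega_{(\sx,\cx)}\in\EAVmn$ and $h=\mOmega_{(y,\sx,\cx)}\colon\Vmanm\to\RRR^{m}$, $h(x)=p_{m}(g(x,y))$; we must show $h\in\EBVm$. For orbit preservation: $g(\orb_{(x,y)}\cap\Vmanmn)\subset\orb_{(x,y)}$ and $(x,y)\in\orb_{(x,y)}\cap\Vmanmn$, so $g(x,y)\in\orb_{(x,y)}$ and $h(x)=p_{m}(g(x,y))\in p_{m}(\orb_{(x,y)})\subset\orb^{\BFld}_{x}$; hence $h$ carries each $\BFld$-orbit into itself. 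For the local-diffeomorphism condition, fix $z\in\FixB\cap\Vmanm$. As $\BFlow(z,t)\equiv z$, the orbit $\orb_{(z,y')}$ lies in $\{z\}\times\RRR^{n}$ for every $y'\in\Vmann$, so $g(z,y')\in\{z\}\times\RRR^{n}$ and $p_{m}(g(z,y'))=z$; thus $h(z)=z$, and, writing $Dg(z,y)$ in blocks with respect to $\RRR^{m+n}=\RRR^{m}\times\RRR^{n}$, the upper-right block vanishes, so $Dg(z,y)=\left(\begin{smallmatrix}A&0\\ C&D\end{smallmatrix}\right)$ and $Dh(z)=A$.

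The step I expect to be the main obstacle is that a point $(z,y)$ with $z\in\FixB$ need \emph{not} be a singular point of $\AFld$, so the local-diffeomorphism hypothesis on $g$ cannot be applied at $(z,y)$ without distinguishing cases. If $\HFld(z,y)\neq0$ then $(z,y)\notin\FixA$, and Corollary~\ref{cor:local_existence_of_sh_func} provides a $\Cinf$ shift function $\afunc$ for $g$ near $(z,y)$, i.e.\ $g=\AFlow(\cdot,\afunc)$ there; restricting to the slice $\{\,y'=y\,\}$ and applying $p_{m}$ gives $h(x)=\BFlow(x,\widetilde{\afunc}(x))$ for $x$ near $z$, where $\widetilde{\afunc}(x):=\afunc(x,y)$. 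Differentiating at $x=z$ and using $\BFld(z)=0$ and $\BFlow(z,\cdot)\equiv z$, the term involving $\tfrac{\partial}{\partial t}\BFlow$ drops out, so $Dh(z)$ equals the Jacobian at the fixed point $z$ of the time-$\widetilde{\afunc}(z)$ map of the flow of $\BFld$, which is invertible (cf.\ Lemma~\ref{lm:sh_loc_diff}). If instead $\HFld(z,y)=0$ then $(z,y)\in\FixA$, hence $g$ is a local diffeomorphism at $(z,y)$, so $Dg(z,y)$ is invertible, and by the block form above $A=Dh(z)$ is invertible. In either case $h$ is a local diffeomorphism at $z$; together with orbit preservation and condition~(2) this proves part~1.

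Part~2 is then immediate: if $\Lambda$ is a shift function for $\Omega$, i.e.\ $\Omega(x,y,\sx,\cx)=\AFlow\bigl((x,y),\Lambda(x,y,\sx,\cx)\bigr)$ on $A$, then applying $p_{m}$ and using $p_{m}\circ\AFlow((x,y),\cdot)=\BFlow(x,\cdot)$ gives $\mOmega(x,y,\sx,\cx)=\BFlow\bigl(x,\Lambda(x,y,\sx,\cx)\bigr)$ on $A$, which is precisely the statement that $\Lambda$ is a shift function for $\mOmega$ with respect to $\BFld$ (with $x$ as base point and $(y,\sx,\cx)$ as parameters).
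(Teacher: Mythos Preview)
Your proof is correct and follows essentially the same approach as the paper: the same appeal to Lemma~\ref{lm:change_deform} for the deformation property, the same orbit-preservation argument via $p_{m}\circ\AFlow=(G,\cdot)$, and the same two-case split at $z\in\FixB$ according to whether $(z,y)\in\FixA$, using Corollary~\ref{cor:local_existence_of_sh_func} in the regular case and invertibility of $Dg(z,y)$ in the singular case. Your block-matrix formulation in the singular case is a slightly crisper rendering of the paper's transversality argument (the paper phrases it as $T_{y}\Vmann$ being $T_{(z,y)}\Omega_{\pp}$-invariant, which is exactly your vanishing upper-right block), and your explicit derivative computation in the regular case is equivalent to the paper's observation that $\mOmega_{(y,\pp)}$ lies locally in $\imSh{\BFlow,\Wmanm}$.
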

\begin{proof} 
1) The statement that $\mOmega$ is a $\dfrm{\Vmann\times\SmParMan}{\CntParMan}{k}$-deformation in $\Ci{\Vmanm}{\RRR^{m}}$ directly follows from Lemma\;\ref{lm:change_deform}.

Denote $\ParMan=\SmParMan\times\CntParMan$ and $\pp=(\sx,\cx)$.
We have to show that $\mOmega_{(y,\pp)}\in\EBVm$ for each $(y,\pp)\in\Vmann\times\ParMan$, i.e.
\begin{itemize}
 \item[(i)]
$\mOmega_{(y,\pp)}$ preserves orbits of $\BFld$, so for each $x\in\Vmanm$, there exists $t\in\RRR$ such that $\mOmega_{(y,\pp)}(x)=\BFlow(x,t)$;
\item[(ii)]
$\mOmega_{(y,\pp)}$ is a local diffeomorphism at each $x\in\FixB\cap\Vmanm$.
\end{itemize}

(i) Since $\Omega_{\pp}$ preserves orbits of $\AFld$, there exists $t_{x,y}\in\RRR$ such that 
$$
\Omega(x,y,\pp)=\AFlow(x,y,t_{x,y})= 
\bigl(
\BFlow(x,t_{x,y}),
\HFlow(x,y,t_{x,y})
\bigr),
$$
whence $\mOmega_{(y,\pp)}(x) = p_m\circ \Omega(x,y,\pp) = \BFlow(x,t_{x,y}).$

(ii)
Let $x\in\FixB\cap\Vmanm$, so $\BFld(x)=0$.
Then $$\AFld(x,y)=(\BFld(x),\HFld(x,y))=(0,\HFld(x,y))$$
for every $y\in\RRR^{n}$.
It follows that the orbit $\orb_{(x,y)}$ of $\AFld$ passing through $(x,y)$ is everywhere tangent to $x\times\RRR^{n}$ and therefore $\orb_{(x,y)} \subset x\times\RRR^{n}$.
Since $\Omega_{\pp}$ preserves orbits of $\AFld$, we also get that 
\begin{equation}\label{equ:Omega_xVn_xRn}
\Omega_{\pp}(x\times\Vman^{n}) \subset x\times\RRR^{n}.
\end{equation}
Consider two cases.

(a) Suppose $(x,y)$ is a singular point of $\AFld$, so $\HFld(x,y)=0$ as well.
Since $\Omega_{\pp}\in\EAVmn$, we have $\Omega_{\pp}(x,y)=(x,y)$ and $\Omega_{\pp}$ is a local diffeomorphism at $(x,y)$.
Then we get from~\eqref{equ:Omega_xVn_xRn} that the tangent space $T_{y}\Vmann=T_{(x,y)}(x\times\Vmann)$ is invariant with respect to the tangent map $T_{(x,y)}\Omega_{\pp}$ and therefore the image $T_{(x,y)}\Omega_{\pp}(T_{x}\Vman^{m})$ of the transversal space $T_{x}\Vmanm=T_{(x,y)}(\Vman^{m}\times y)$ is transversal to $T_{y}\Vman^{n}$, see Figure\;\ref{fig:tansp}.
\begin{figure}[ht]
\includegraphics[height=2.5cm]{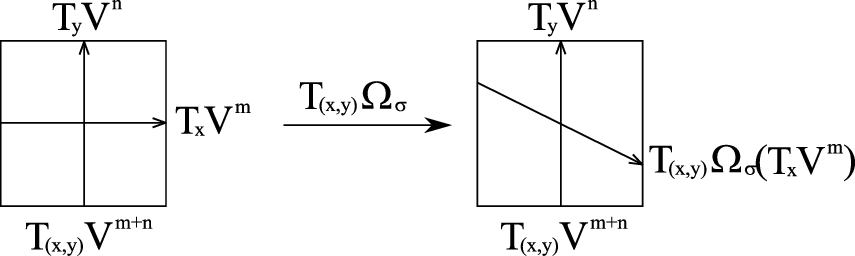} 
\caption{}
\protect\label{fig:tansp}
\end{figure}
This implies that the tangent map $T_{x}\mOmega_{(y,\pp)}=T_{x}(p_m\circ\Omega_{\pp})$ is non-degenerate, i.e. $\mOmega_{(y,\pp)}$ is a local diffeomorphism at $x$.

(b) Suppose $\HFld(x,y)\not=0$, so $(x,y)$ is a regular point of $\AFld$.
Then by Corollary\;\ref{cor:local_existence_of_sh_func} there exist neighbourhoods $\Wmanm \subset\Vmanm$ of $x$ and $\Wmann \subset\Vmann$ of $y$, and a $\Cinf$ shift function $\bfunc:\Wmanm\times\Wmann\to\RRR$ for $\Omega_{\pp}$, so
$$
\Omega_{\pp}(\bar x,\bar y) = \AFlow(\bar x,\bar y,\bfunc(\bar x,\bar y)) = (\BFlow(\bar x,\bfunc(\bar x,\bar y)),\HFld(\bar x,\bar y,\bfunc(\bar x,\bar y)),
$$
for all $(\bar x,\bar y)\in\Wmanm\times\Wmann$.
In particular, $\mOmega_{(y,\pp)}(\bar x) =\BFlow(\bar x,\bfunc(\bar x,y))$ for all $\bar x\in\Wmanm$.
Thus the restriction $\mOmega_{(y,\pp)}|_{\Wmanm}$ belongs to the image of shift map $\imSh{\BFlow,\Wmanm} \subset \EBWm$, whence $\mOmega_{(y,\pp)}$ is a local diffeomorphism at $x$.

2) Suppose $\Omega$ has a shift function\;\eqref{equ:Lambda_sh_func}.
Then the identity 
$$\Omega(x,y,\pp)=\AFlow(x,y,\Lambda(x,y,\pp)), \qquad (x,y,\pp)\in A,$$
implies
$\mOmega(x,y,\pp)=p_m\circ \Omega(x,y,\pp)=\BFlow(x,\Lambda(x,y,\pp))$.
\end{proof}

\section{Examples}\label{sect:examples}
In this section we consider examples of vector fields $\AFld$ for which relation between $\imShA$ and $\EidAFlow{k}$ is known and whose singular points satisfy assumptions (a)-(c) of Theorem\;\ref{th:main_result}.

\subsection{Vector fields on $\RRR$}\label{sect:vf_R1}
Let $\BFld(x)=\bfunc(x)\tfrac{\partial}{\partial x}$ be a vector field on $\RRR$, where $\bfunc:\RRR\to\RRR$ is a smooth function.
Suppose $\FixF=\{0\}$, so $\BFld$ has only three orbits $(-\infty,0)$, $\{0\}$, $(0,\infty)$.
This is equivalent to the assumption that $\bfunc^{-1}(0)=0$.

By the Hadamard lemma $\bfunc(x)=x\,\mu(x)$, where $\mu(x) = \int_{0}^{1}\bfunc'(tx)dt$.
Moreover, $\mu(0)=\bfunc'(0)$ and $\mu(x)\not=0$ for $x\not=0$.
Hence by Corollary\;\ref{cor:imSh_not_Eid} if $\bfunc'(0)=\mu(0)=0$, then $\imShB\not=\EidBFlow{\infty}$.

If $\bfunc'(0)\not=0$, then $\mu\not=0$ on all of $\RRR$ and by Lemma\;\ref{lm:Sh-EA-under-reparametrization} instead of $\BFld$ we can consider the \myemph{linear} vector field $\AFld(x)=x\tfrac{\partial}{\partial x}$.

It is shown in\;\cite{Maks:TA:2003} that $\imShA=\EidAFlow{0}$ for any linear vector field on $\RRR^n$.
We will now discuss these results.

\subsection{}\label{sect:shift-func-linear-R1}
Let $\AFld(x)= v x\frac{\partial}{\partial x}$ be the linear vector field on $\RRR$.
Then it generates the following flow $\AFlow(x,t)=e^{v t}x$.
Let $\Vman$ be an open neighbourhood of $0$ and $\Omega:\Vman\times\SmParMan\times\CntParMan\to\RRR$ be an $\dfrm{\SmParMan}{\CntParMan}{k}$-deformation in $\EAV$.
Then $\Omega_{(\sx,\cx)}(0)=0$, and by the Hadamard lemma 
$$\Omega(x,\sx,\cx)=x \smallint_{0}^{1}\tfrac{\partial\Omega}{\partial x}(tx,\sx,\cx)dt.$$
Since $\Omega_{(\sx,\cx)}\in\EAV$, we have that $\tfrac{\partial\Omega}{\partial x}(x,\sx,\cx)>0$, i.e. $\Omega_{(\sx,\cx)}$ is a local diffeomorphism at $0$.
Hence we can define the following function 
$\Lambda:\Vman\times\SmParMan\times\CntParMan\to\RRR$ by
$$
\Lambda(x,\sx,\cx) = \tfrac{1}{v}\ln\smallint_{0}^{1}\tfrac{\partial\Omega}{\partial x}(tx,\sx,\cx)dt.
$$
Then $\Lambda$ is a $\dfrm{\SmParMan}{\CntParMan}{k-1}$-deformation being a shift function for $\Omega$, i.e. $\Omega(x,\sx,\cx)=e^{v\cdot \Lambda(x,\sx,\cx)}$, c.f.\;\cite[Eq.\;(23)]{Maks:TA:2003}.
In particular, we get $\imShAV=\EAV$.

\subsection{}\label{sect:shift-func-linear-nilp-R2}
Consider the following linear vector field on $\RRR^2$:
$$
\AFld(x,y)=(\tfrac{\partial}{\partial x}, \tfrac{\partial}{\partial y})
\left(\begin{smallmatrix} 0 & 1 \\ 0 & 0 \end{smallmatrix}\right)
\left(\begin{smallmatrix} x  \\ y \end{smallmatrix}\right) =
y\tfrac{\partial}{\partial x}.
$$
It generates the flow $\AFlow(x,y,t) = (x+yt, y)$.
Let $\Vman$ be an open neighbourhood of $\orig$, and $\Omega=(\Omega_1,\Omega_2):\Vman\times\SmParMan\times\CntParMan\to\RRR$ be an $\dfrm{\SmParMan}{\CntParMan}{k}$-deformation in $\EAV$.
Then $\Omega_2(x,y,\sx,\cx)\equiv y$ and $\Omega_1(x,0,\sx,\cx)\equiv x$.
Hence by the Hadamard lemma
$$\Omega_{1}(x,y,\sx,\cx)-x =  y \smallint_{0}^{1}\tfrac{\partial\Omega_1}{\partial y}(x,ty,\sx,\cx)dt.$$
Define the function 
$\Lambda:\Vman\times\SmParMan\times\CntParMan\to\RRR$ by
$$
\Lambda(x,y,\sx,\cx) = \smallint_{0}^{1}\tfrac{\partial\Omega_1}{\partial y}(x,ty,\sx,\cx)dt.
$$
Then $\Lambda$ is a $\dfrm{\SmParMan}{\CntParMan}{k-1}$-deformation being a shift function for $\Omega$, that is 
$\Omega(x,y,\sx,\cx)=(x+ y\cdot\Lambda(x,y,\sx,\cx), y)$, c.f.\;\cite[Eq.\;(26)]{Maks:TA:2003}.
Again we get $\imShAV=\EAV$.

\subsection{}\label{sect:shift-func-linear-rotation-R2}
Let $\omega=a+bi\not=0$ be a complex number, $\AFlow(z,t)=e^{\omega\, t} z$ be the corresponding linear flow on $\CCC$, and $\Vman\subset\CCC$ be an open neighbourhood of $\orig$.
By definition every $\Omega\in\EAV$ is a local diffeomorphism at $\orig$.

Let $\Omega:\Vman\times\RRR^{\dd}\to\Mman$ be a $(\RRR^{\dd},\Cinf)$-deformation in $\EAV$ such that $\Omega_{\sx}$ preserves orientation at $\orig$ for each $\sx\in\RRR^{\dd}$.
Then by \cite[Lm.\;31]{Maks:TA:2003} there exists a $\Cinf$ function $\gamma:\Vman\times\RRR^{\dd}\to\CCC\setminus\orig$ such that $\Omega(z,\sx)=z\cdot\gamma(z,\sx)$.
Hence, \cite[Eqs.(24),(25)]{Maks:TA:2003}, the following function $\Lambda:\Vman\times\RRR^{\dd}\to\RRR$ defined by 
$$
\Lambda(z)=
\begin{cases}
\tfrac{1}{2a} \ln|\gamma(z,\sx)|^2, & a\not=0, \\
\tfrac{1}{b} \arg(\gamma(z,\sx)), & a=0, 
\end{cases}
$$
is a $\Cinf$ shift function for $\Omega$.
In the second case $\Lambda$ is defined up to a constant summand $2\pi k/n$.

\begin{remark}\rm
In \S\ref{sect:shift-func-linear-R1} and \S\ref{sect:shift-func-linear-nilp-R2} a shift function $\Lambda$ of an $\dfrm{\SmParMan}{\CntParMan}{k}$-defor\-mation $\Omega$ is expressed via partial derivatives of $\Omega$.
Therefore it looses its ``smoothness in $\tau$'' by $1$ though it remains $\Cinf$ in $\sx$.
The main tool of constructing $\Lambda$ is the Hadamard lemma.

On the other hand in \S\ref{sect:shift-func-linear-rotation-R2} the proof of mentioned above ``division lemma'' \cite[Lm.\;31]{Maks:TA:2003} uses \myemph{all} the partial derivatives of $\Omega$ for the proof of smoothness of $\gamma$.
Hence if $\Omega$ were an $\dfrm{\SmParMan}{\CntParMan}{k}$-deformation, then we could not guarantee that the obtained function $\Lambda$ is even continuous in $\cx$ though for each $\cx\in\CntParMan$ the function $\Lambda_{\cx}$ is $\Cinf$ on $\Vman$, c.f. Definition\;\ref{defn:ES_dk}.
Such effects are typical for divisibility by smooth functions, c.f.\cite[p.~93. Eq.~(2)]{Mather_1:AnnMath:1968} and~\cite{MostowShnider:TrAMS:1985}.
\end{remark}

\subsection{Properties $\EP{\ld}$}\label{sect:prop_En}
In\;\cite[Defn.\;24]{Maks:TA:2003} for every $\ld\geq0$ the following ``extension'' property $\EP{\ld}$ for a singular point $z$ of a vector field $\AFld$ was introduced (see also Remark\;\ref{rem:Maks:TA:2003-gaps}).
We will now present a slight modification of this property which will be useful for further considerations.

\begin{definition}\label{defn:prop_Ed}
Let $z\in\FixF$ be such that $\FixF$ is nowhere dense at $z$.
Say that $\AFld$ has \myemph{property $\EP{\ld}$}, $(\ld\geq0)$, at $z$ if for any 
\begin{itemize}
\item
open neighbourhood $\Vman$ of $z$,
\item
a smooth manifold $\SmParMan^{\ld}$ of dimension $\dim\SmParMan^{\ld}=\ld$,
\item
an $(\SmParMan^{\ld},\Cinf)$ deformation $\Omega:\Vman\times\SmParMan^{\ld}\to\Mman$ in $\EAV$, and
\item
a $\Cinf$ shift function $\Lambda:(\Vman\setminus\FixF)\times\SmParMan^{\ld}\to\RRR$ for $\Omega$, i.e. 
\begin{equation}\label{equ:Lambda_is_shift_func}
\Omega(x,\sx)=\AFlow(x,\Lambda(x,\sx)), \qquad (x,\sx)\in(\Vman\setminus\FixF)\times\SmParMan^{\ld},
\end{equation}
\end{itemize}
there exists a neighbourhood $\Wman\subset\Vman$ of $z$ such that $\Lambda$ extends to a $\Cinf$ function $\Lambda:\Wman\times\SmParMan^{\ld}\to\RRR$.
\end{definition}

Thus in order to verify $\EP{\ld}$ one should \myemph{smoothly resolve} \eqref{equ:Lambda_is_shift_func} with respect to $\Lambda$ on a neighbourhood of $z\times\SmParMan^{\ld}$ using the assumption that $\Omega_{\sx}$ preserves orbits of $\AFld$ and is a local diffeomorphism at each $x\in\FixF\cap\Vman$.
In particular, vector fields of\;\S\S\ref{sect:shift-func-linear-R1}-\ref{sect:shift-func-linear-rotation-R2} evidently have $\EP{\ld}$ for all $\ld\geq0$.
In the first two cases this is trivial, and in the last case it should be noted that if $\Omega_{\sx}$ has a shift function on $\Vman\setminus\FixF=\Vman\setminus\orig$, then it should preserve orientation at $\orig$.

Property $\EP{0}$ allows to prove that $\imShA=\EidAFlow{0}$.

\begin{lemma}\label{lm:prop_E0}{\rm\cite[Th.\;25]{Maks:TA:2003}}
Suppose that $\FixF$ is nowhere dense and $\AFld$ has property $\EP{0}$ at each $z\in\FixF$.
Then $\imShA=\EidAFlow{0}$.
\end{lemma}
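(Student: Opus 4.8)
The plan is to prove the inclusion $\EidAFlow{0}\subset\imShA$; the reverse inclusion is already part of Lemma~\ref{lm:imShA_EidAV}. After replacing $\AFld$ by a suitable strictly positive reparametrization (which changes neither $\EAFlow$ nor $\imShA$) we may and do assume that $\AFld$ generates a global flow, so that $\funcA=\Ci{\Mman}{\RRR}$ and $\ShA$ is defined on every smooth function.

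Fix $\dif\in\EidAFlow{0}$ and choose a $0$-homotopy $\Omega:\Mman\times I\to\Mman$ in $\EAFlow$ with $\Omega_0=\id_{\Mman}$ and $\Omega_1=\dif$; equivalently, $\Omega$ is a $\dfrmc{I}{0}$-deformation in $\EAFlow$. Since $\Omega_0=\AFlow(\cdot,0)$ has the shift function $\afunc\equiv0$ and $\pi_1(I)=0$, Corollary~\ref{cor:shift-function-for-deformations} (taken with $\Vman=\Mman$) produces a unique continuous shift function $\Lambda:(\Mman\setminus\FixF)\times I\to\RRR$ for $\Omega$ with $\Lambda_0\equiv0$. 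In particular $\Lambda_1\in\Ci{\Mman\setminus\FixF}{\RRR}$ is a smooth shift function for $\dif$ over the set of regular points: $\dif(x)=\AFlow(x,\Lambda_1(x))$ for all $x\in\Mman\setminus\FixF$. The whole problem now reduces to extending $\Lambda_1$ to a $\Cinf$ function on all of $\Mman$, and this is a local matter near each point of $\FixF$.

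So fix $z\in\FixF$ and an open neighbourhood $\Vman$ of $z$. The restriction $\dif|_{\Vman}$ is a single $\Cinf$ map belonging to $\EAV$: it sends each $\orb\cap\Vman$ into $\orb$ and is a local diffeomorphism at every singular point of $\AFld$ in $\Vman$, because $\dif\in\EAFlow$. Moreover $\Lambda_1|_{\Vman\setminus\FixF}$ is a $\Cinf$ shift function for $\dif|_{\Vman}$ on $\Vman\setminus\FixF$. Treating $\dif|_{\Vman}$ as an $(\SmParMan^{0},\Cinf)$-deformation in $\EAV$ with $\SmParMan^{0}$ a point, property $\EP{0}$ at $z$ (Definition~\ref{defn:prop_Ed}) furnishes an open neighbourhood $\Wman_z\subset\Vman$ of $z$ together with a $\Cinf$ function $\afunc_z:\Wman_z\to\RRR$ that extends $\Lambda_1|_{\Wman_z\setminus\FixF}$.

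Finally I would glue. Since $\FixF$ is nowhere dense, $\Wman_z\setminus\FixF$ is dense in $\Wman_z$; hence on any overlap $\Wman_z\cap\Wman_{z'}$ the continuous functions $\afunc_z$ and $\afunc_{z'}$ coincide on the dense subset avoiding $\FixF$, so they coincide throughout, and likewise each $\afunc_z$ coincides with $\Lambda_1$ where both are defined. Thus $\Lambda_1$ together with the family $\{\afunc_z\}_{z\in\FixF}$ glues to a single function $\afunc:\Mman\to\RRR$ which is $\Cinf$ on $\Mman\setminus\FixF$ and on every $\Wman_z$, hence $\Cinf$ on $\Mman$. Then $\dif(x)=\AFlow(x,\afunc(x))$ holds on the dense set $\Mman\setminus\FixF$, and since both sides are continuous it holds on all of $\Mman$; that is, $\dif=\ShA(\afunc)\in\imShA$, completing the proof. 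There is no serious obstacle: the analytic content is packaged in Theorem~\ref{th:shift-function-for-deformations}/Corollary~\ref{cor:shift-function-for-deformations} (existence of the continuous shift function off $\FixF$) and in the hypothesis $\EP{0}$ (smooth extendability across one singular point); the only thing needing a word is that nowhere-density of $\FixF$ forces the local extensions to agree, which legitimizes the gluing.
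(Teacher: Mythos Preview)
Your proof is correct and follows essentially the same route as the paper (and the cited \cite[Th.\;25]{Maks:TA:2003}): build the global shift function $\Lambda$ on $(\Mman\setminus\FixF)\times I$ via Theorem\;\ref{th:shift-function-for-deformations}/Corollary\;\ref{cor:shift-function-for-deformations}, then invoke $\EP{0}$ at each $z\in\FixF$ to extend $\Lambda_1$ smoothly across $z$, and finally glue using nowhere density of $\FixF$. This is exactly the template of the proof of Theorem\;\ref{th:suffcond_for_imShA_EidAk} specialized to $k=0$, with $\EP{0}$ playing the role that conditions (a)--(c) of Theorem\;\ref{th:main_result} play there.
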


As a direct consequence of Lemma\;\ref{lm:pmOmega_is_deformation}, we obtain that properties $\EP{\ld}$ for $\ld>0$ allow to prove $\EP{0}$, and therefore to establish $\imShA=\EidAFlow{0}$, for regular extensions.

\begin{lemma}\label{lm:En_heredity}{\rm\cite[Lm.\;29]{Maks:TA:2003}}, (see also Remark\;\ref{rem:Maks:TA:2003-gaps} and Lemma\;\ref{lm:pmOmega_is_deformation})
Let $\BFld$ (resp. $\AFld$) be a germ of a vector field on $\RRR^{n}$ at $\on$ (resp. on $\RRR^{m+n}$ at $\omn$).
Suppose $\AFld$ is a regular extension of $\BFld$.
If $\BFld$ has property $\EP{\ld+n}$ at $\om$, then $\AFld$ has property $\EP{\ld}$ at $\omn$.
\end{lemma}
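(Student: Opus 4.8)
The plan is to verify property $\EP{\ld}$ for $\AFld$ at $\omn$ straight from Definition~\ref{defn:prop_Ed}, reducing it to property $\EP{\ld+n}$ for $\BFld$ at $\om$ by moving the $n$ ``fiber'' directions of the regular extension into the parameter manifold and invoking Lemma~\ref{lm:pmOmega_is_deformation}.

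First I would localize and put everything into product form. Fix an open neighbourhood $\Vman$ of $\omn$, a smooth manifold $\SmParMan^{\ld}$ of dimension $\ld$, an $(\SmParMan^{\ld},\Cinf)$-deformation $\Omega:\Vman\times\SmParMan^{\ld}\to\RRR^{m+n}$ in $\EAV$, and a $\Cinf$ shift function $\Lambda:(\Vman\setminus\FixA)\times\SmParMan^{\ld}\to\RRR$ for $\Omega$. Choose connected open neighbourhoods $\Vmanm\subset\RRR^{m}$ of $\om$ and $\Vmann\subset\RRR^{n}$ of $\on$ with $\Vmanmn:=\Vmanm\times\Vmann\subset\Vman$, and restrict $\Omega$ and $\Lambda$ to this product; since restriction preserves membership in $\EAVmn$ and the shift-function identity, it suffices to find a neighbourhood $\Wman\subset\Vmanmn$ of $\omn$ over which $\Lambda$ extends to a $\Cinf$ function. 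Now put $\mOmega=p_m\circ\Omega$. By part~1) of Lemma~\ref{lm:pmOmega_is_deformation}, applied with $\CntParMan$ equal to a point, $\mOmega$ is an $(\Vmann\times\SmParMan^{\ld},\Cinf)$-deformation in $\EBVm$, and by part~2) the function $\Lambda$ is a shift function for $\mOmega$ with respect to $\BFld$ wherever it is defined. The crucial point is the inclusion $\FixA\subset\FixB\times\Vmann$ from~\eqref{equ:FixA_FixB_for reg_ext}: it gives $(\Vmanm\setminus\FixB)\times\Vmann\subset\Vmanmn\setminus\FixA$, so $\Lambda$ restricts to a $\Cinf$ function on $(\Vmanm\setminus\FixB)\times(\Vmann\times\SmParMan^{\ld})$, which is precisely the domain Definition~\ref{defn:prop_Ed} requires of a shift function for $\mOmega$.

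Next I would feed this into the hypothesis. Regard $\SmParMan^{\ld+n}:=\Vmann\times\SmParMan^{\ld}$ as a connected smooth manifold of dimension $\ld+n$. Then $\Vmanm$, $\SmParMan^{\ld+n}$, $\mOmega$ and $\Lambda$ form admissible data for property $\EP{\ld+n}$ of $\BFld$ at $\om$, so there are a neighbourhood $\Wmanm\subset\Vmanm$ of $\om$ and a $\Cinf$ function $\Lambda:\Wmanm\times\Vmann\times\SmParMan^{\ld}\to\RRR$ extending that shift function. Set $\Wman:=\Wmanm\times\Vmann$, a neighbourhood of $\omn$ contained in $\Vmanmn\subset\Vman$. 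The function just obtained is a $\Cinf$ function on $\Wman\times\SmParMan^{\ld}$, and it agrees with the original $\Lambda$ on $(\Wmanm\setminus\FixB)\times\Vmann\times\SmParMan^{\ld}$; since $\FixB$ is nowhere dense at $\om$, the latter set is dense in $\Wman\times\SmParMan^{\ld}$, hence dense in its open subset $(\Wman\setminus\FixA)\times\SmParMan^{\ld}$, on which both functions are continuous. Therefore the new function genuinely extends $\Lambda$, and this verifies $\EP{\ld}$ for $\AFld$ at $\omn$ (nowhere-density of $\FixA$ at $\omn$, needed for the property to be meaningful, also follows from $\FixA\subset\FixB\times\Vmann$).

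I do not expect a single hard step here; the thing to be careful about is the bookkeeping of domains of shift functions. One must make sure that $\Lambda$, which a priori is defined only off $\FixA$, still supplies a shift function for $\mOmega$ defined off $\FixB$ once the fiber variable $y\in\Vmann$ is absorbed into the parameters, and that the smooth function produced by $\EP{\ld+n}$ is an honest extension of $\Lambda$ rather than of some restriction of it. Both are handled by the inclusion $\FixA\subset\FixB\times\Vmann$ together with the fact that $\FixB$ is nowhere dense; beyond Lemma~\ref{lm:pmOmega_is_deformation}, no new input is needed.
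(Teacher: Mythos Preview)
Your proof is correct and follows exactly the approach the paper intends: the paper states this lemma as ``a direct consequence of Lemma~\ref{lm:pmOmega_is_deformation}'' without a separate proof, and the analogous Lemma~\ref{lm:inher_ESDL} is proved by precisely the same scheme you describe (project via $p_m$, absorb $\Vmann$ into the parameter manifold, invoke the hypothesis for $\BFld$). Your extra care with the domain bookkeeping and the density argument showing the extension is genuine is sound and only makes the argument more explicit than the paper's version.
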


\begin{definition}\label{defn:reLT_vector fields}
Let $\AFld$ be a vector field on a manifold $\Mman$ and $z\in\FixF$.
Say that $z$ is of type \reLT\ for $\AFld$, if the germ of $\AFld$ at $z$ is a regular extension of some linear vector field on $\RRR^{n}$ at $\on$ for some $n>0$.
\end{definition}

\begin{corollary}\label{cor:regext_lin_Ed}{\rm\cite[Th.\;27]{Maks:TA:2003}}.
Every point of type \reLT\ has property $\EP{\ld}$ for each $\ld\geq0$.
\end{corollary}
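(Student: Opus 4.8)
The plan is to reduce the germ of $\AFld$ at $z$, through a chain of regular extensions, to one of the three ``minimal'' linear vector fields listed in \eqref{equ:minimal_irreducible_lin}, and then to invoke the heredity Lemma\;\ref{lm:En_heredity} together with the fact that those minimal vector fields satisfy $\EP{j}$ for every $j\geq0$.

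First I would unwind Definition\;\ref{defn:reLT_vector fields}. Writing $N=\dim\Mman$ and choosing local coordinates identifying $z$ with the origin of $\RRR^{N}$, the germ of $\AFld$ at $z$ is a regular extension of a linear vector field $\BFld$ on $\RRR^{n}$ at $\on$ for some $0<n\leq N$. By the real Jordan normal form theorem (the paragraph containing \eqref{equ:minimal_irreducible_lin}) $\BFld$ is itself a regular extension of a linear vector field $\PFld$ on $\RRR^{p}$, with $1\leq p\leq n$, given by one of the matrices in \eqref{equ:minimal_irreducible_lin}. Since a regular extension of a regular extension is again a regular extension, the germ of $\AFld$ at $z$ is a regular extension of $\PFld$; that is, identifying $\RRR^{N}=\RRR^{p}\times\RRR^{N-p}$, it has the form $\AFld(x,y)=(\PFld(x),\HFld(x,y))$ for a $\Cinf$ germ $\HFld$.

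Then I would apply Lemma\;\ref{lm:En_heredity} with $\PFld$ playing the role of $\BFld$ there, with $N-p$ playing the role of $n$, and with the given $\ld$: it reduces the claim ``$\AFld$ has $\EP{\ld}$ at $z$'' to the claim ``$\PFld$ has $\EP{\ld+N-p}$ at the origin of $\RRR^{p}$''. Finally I would check that each of the three vector fields of \eqref{equ:minimal_irreducible_lin} has property $\EP{j}$ for all $j\geq0$ — which has already been carried out in \S\S\ref{sect:shift-func-linear-R1}--\ref{sect:shift-func-linear-rotation-R2}: for $\PFld=\lambda x\,\tfrac{\partial}{\partial x}$ with $\lambda\not=0$ and for $\PFld=y\,\tfrac{\partial}{\partial x}$ on $\RRR^{2}$ the Hadamard lemma produces an explicit $\Cinf$ shift function of any $(\SmParMan^{j},\Cinf)$-deformation in $\EAV$, while for $\PFld=R(a,b)$ with $b\not=0$ one uses that the existence of a shift function on the complement of the origin forces the deformation to preserve orientation there, after which the division lemma \cite[Lm.\;31]{Maks:TA:2003} supplies the $\Cinf$ extension.

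The argument is essentially an assembly of facts already in place, so there is no serious obstacle; the only points requiring care are the bookkeeping of dimensions when composing regular extensions and feeding them into Lemma\;\ref{lm:En_heredity}, and — in the rotation case $R(a,b)$ — the orientation remark needed in order to apply the division lemma legitimately. One should also keep in mind that property $\EP{\ld}$ is only defined at points where $\FixF$ is nowhere dense, which is why the reduction uses $\lambda\not=0$ (and why the nilpotent and rotation blocks cause no trouble).
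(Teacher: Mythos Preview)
Your argument is correct and follows exactly the paper's own route: reduce, via the real Jordan normal form and the transitivity of regular extensions, to one of the minimal linear germs of \eqref{equ:minimal_irreducible_lin}, invoke the explicit shift-function constructions of \S\S\ref{sect:shift-func-linear-R1}--\ref{sect:shift-func-linear-rotation-R2} to get $\EP{j}$ for all $j$ there, and then apply Lemma~\ref{lm:En_heredity}. One small remark: your justification for taking $\lambda\neq0$ (``because $\EP{\ld}$ is only defined where $\FixF$ is nowhere dense'') is not quite the right reason---$\FixF$ can be nowhere dense for $\AFld$ even when the base linear block is zero---but since the linear vector field $\BFld$ in Definition~\ref{defn:reLT_vector fields} is understood to be nonzero, one may always project to a nonzero Jordan block, and the paper's proof tacitly makes the same assumption.
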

\begin{proof}
As noted above, every linear vector field is a regular extension of some vector field considered in \;\S\S\ref{sect:shift-func-linear-R1}-\ref{sect:shift-func-linear-rotation-R2}, see also\;\eqref{equ:minimal_irreducible_lin}.
Then our statement follows from Lemma\;\ref{lm:En_heredity}.
\end{proof}

The problem of linearization of a vector field in a neighbourhood of a singular point was extensively studied, see e.g.\;\cite{Siegel:1952, Sternberg:AmJM:1957, Venti:JDE:1966, KondratevSamovol:MZ:1973, Blackmore:JDE:1973}.

\begin{corollary}\label{cor:regext_lin_imSh_EidA0}{\rm\cite[Th.\;1(A)]{Maks:TA:2003}}.
Suppose every $z\in\FixF$ is of type \reLT\ for $\AFld$.
Then $\imShA=\EidAFlow{0}.$
\end{corollary}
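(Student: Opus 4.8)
This is the special case of Lemma \ref{lm:prop_E0} obtained once one knows that every singular point of $\AFld$ carries property $\EP{0}$, so the plan is to reduce the corollary to that lemma. First I would check the remaining hypothesis of Lemma \ref{lm:prop_E0}, namely that $\FixF$ is nowhere dense. For a point $z$ of type \reLT\ the germ of $\AFld$ at $z$ is, by Definition \ref{defn:reLT_vector fields}, a regular extension of a linear vector field $\BFld$ on some $\RRR^{n}$; hence by \eqref{equ:FixA_FixB_for reg_ext} the set $\FixF$ is, near $z$, contained in $\FixB\times(\text{Euclidean factor})$, and since $\FixB$ (the kernel of the corresponding matrix) is a proper subspace of $\RRR^{n}$, this set is nowhere dense. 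As $z$ runs over $\FixF$, it follows that $\FixF$ is nowhere dense in $\Mman$. (One may also simply observe that nowhere-density is built into the statement of Corollary \ref{cor:regext_lin_Ed}, since property $\EP{0}$ is, by Definition \ref{defn:prop_Ed}, only formulated under that assumption.)

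Next I would invoke Corollary \ref{cor:regext_lin_Ed}: every $z\in\FixF$, being of type \reLT, has property $\EP{\ld}$ for each $\ld\ge0$, in particular property $\EP{0}$. Thus $\AFld$ meets both hypotheses of Lemma \ref{lm:prop_E0} --- $\FixF$ nowhere dense, and $\EP{0}$ at each singular point --- and that lemma gives $\imShA=\EidAFlow{0}$, which is precisely the assertion.

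No substantive new work is needed here: Corollary \ref{cor:regext_lin_Ed} (which itself rests on Lemma \ref{lm:En_heredity} and the explicit shift-function computations of \S\S\ref{sect:shift-func-linear-R1}--\ref{sect:shift-func-linear-rotation-R2}) supplies property $\EP{0}$, and Lemma \ref{lm:prop_E0} (\;=\;\cite[Th.\;25]{Maks:TA:2003}\;) does the rest. The only point demanding a moment's attention is the elementary bookkeeping that $\FixF$ is nowhere dense, which is handled above; there is no genuine obstacle.
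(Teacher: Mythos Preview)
Your argument is correct and matches the paper's intended reasoning: the corollary is stated without proof precisely because it follows at once from Corollary~\ref{cor:regext_lin_Ed} (property $\EP{0}$ at each \reLT\ point) combined with Lemma~\ref{lm:prop_E0}. Your extra care in verifying that $\FixF$ is nowhere dense is a reasonable addition, and your parenthetical observation that this is already built into Definition~\ref{defn:prop_Ed} is the cleanest way to dispose of it.
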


\subsection{Reduced Hamiltonian vector fields.}\label{sect:reduced_hvf}
In\;\cite{Maks:CEJM:2009} the author presented a class of examples of highly degenerate vector fields $\AFld$ on $\RRR^n$ with singularity at $\orig$ for which \myemph{$\imShA$ coincides at $\orig$ with $\EidAFlow{1}$ formally}.
That is for each $\dif\in\EidAFlow{1}$ there exists $\afunc\in\Ci{\RRR^{n}}{\RRR}$ such that 
$j^{\infty}\dif(\orig)=j^{\infty}\ShA(\afunc)(\orig)$.
Every such vector field (and even its initial non-zero jet at $\orig$) turned out to be non-divisible by smooth functions, c.f. Corollary\;\ref{cor:imSh_not_Eid}.

Moreover, in the following special case it can be said more.

Let $g:\RRR^2\to\RRR$ be a homogeneous polynomial of degree $p\geq 2$, so
\begin{equation}\label{equ:g_homog_poly}
g=L_1^{l_1} \cdots L_{a}^{l_a}  \cdot Q_1^{q_1} \cdots Q_{b}^{q_b}, 
\end{equation}
where $L_i$ is a non-zero linear function, $Q_j$ is an irreducible over $\RRR$ (definite) quadratic form, $l_i,q_j\geq 1$, $L_i/L_{i'}\not=\mathrm{const}$ for $i\not=i'$, and $Q_j/Q_{j'}\not=\mathrm{const}$ for $j\not=j'$.
Put
$$
D = L_1^{l_1-1} \cdots L_{a}^{l_a-1}  \cdot Q_1^{q_1-1} \cdots Q_{b}^{q_b-1}.
$$
Then $g=L_1 \cdots L_{a} \cdot Q_1 \cdots Q_{b}\cdot D$ and it is easy to see that \myemph{$D$ is the greatest common divisor of the partial derivatives $g'_{x}$ and $g'_{y}$}.
The following \myemph{polynomial} vector field on $\RRR^2$:
$$
\AFld(x,y)=-(g'_{y}/D)\,\tfrac{\partial}{\partial x} + (g'_{x}/D)\, \tfrac{\partial}{\partial y}
$$ 
will be called the \myemph{reduced Hamiltonian} vector field of $g$.
In particular, if $g$ has no multiple factors, i.e. $l_i=q_j=1$ for all $i,j$, then $D\equiv1$ and $\AFld$ is the usual \myemph{Hamiltonian} vector field of $g$.

Notice that $\AFld(g)\equiv0$ and the coordinate functions of $\AFld$ are relatively prime homogeneous polynomials of degree $\deg\AFld=a+2b-1$.

If $\deg\AFld=1$, so $\AFld$ is linear, then by Corollary\;\ref{cor:regext_lin_imSh_EidA0}, $\imShA=\EidAFlow{0}$.

Suppose $\deg\AFld\geq2$, so $a+2b\geq3$.
Then we will distinguish the following two cases, see Figure\;\ref{fig:H_type}.

{\bf Case \RHE}: $a=0$ and $b\geq2$.
Thus $g=Q_1\cdots Q_{b}$ is a product of at least two distinct irreducible quadratic forms, and, in particular, the origin $\orig\in\RRR^2$ is a global \myemph{extreme} of $g$

{\bf Case \RHS}: all other cases, so either $a\geq3$ and $b=0$, or $a\geq1$ but $b\geq1$.
Then $\orig\in\RRR^2$ is a \myemph{saddle} critical point of $g$.

\begin{figure}[ht]
\begin{tabular}{ccc}
\includegraphics[height=1.2cm]{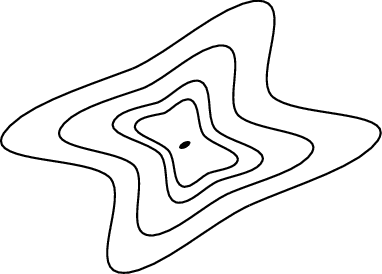} & \qquad \qquad \qquad &
 \includegraphics[height=1.2cm]{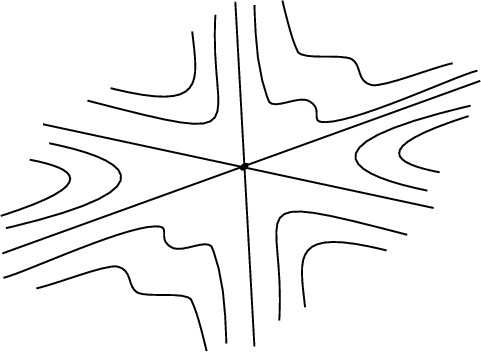}  \\
\RHE & & \RHS 
\end{tabular}
\caption{}
\protect\label{fig:H_type}
\end{figure}

It is shown in\;\cite{Maks:MFAT:2009} that in the case \RHE\ $\imShA\not=\EidAFlow{0}$.
By Lemma\;\ref{lm:prop_E0} this implies that \RHE-singularities do not satisfy $\EP{0}$.

The last statement can also be verified by another arguments.
Let $\theta:\RRR^2\setminus\orig\to\RRR$ be the function associating to every $z\not=\orig$ its period with respect to $\AFld$.
Then $\theta$ is a $\Cinf$ shift function on $\RRR^2\setminus\orig$ for the identity map $\id_{\RRR^2}$.
But since $j^1\AFld(\orig)=0$, we have that $\lim\limits_{z\to\orig}\theta(z)=\infty$, see Example\;\ref{exmp:top_sing}.
Hence $\theta$ can not be even continuously extended to all of $\RRR^2$.
Therefore the origin $\orig\in\RRR^2$ does not satisfy $\EP{0}$.

\begin{definition}
Let $\AFld$ be a vector field on a manifold $\Mman$ and $z\in\FixF$.
We say that $z$ is of type \reRHE, resp. \reRHS, for $\AFld$, if the germ of $\AFld$ at $z$ is a regular extension of some reduced Hamiltonian vector field of a homogeneous polynomial~\eqref{equ:g_homog_poly} of the case \RHE, resp. \RHS.

In either the cases we say that $z$ is of type \reRH.
\end{definition}

\begin{example}\rm
A singular point of a vector field can belong to distinct types.
For instance, consider the following vector field on $\RRR^5$:
$$
\AFld(a,b,c,d,e) \ = \  (2a, \ \ - 4c^3, \ 4b^3, \ \ -d^2-2de, \ 3de+2e^2)
$$
Then $\AFld$ is a product of the linear vector field $2a\tfrac{\partial}{\partial a}$ and reduced Hamiltonian vector fields of polynomials $b^4+c^4$ and $d^3\,e + d^2e^2$.
Whence the origin $\orig\in\RRR^5$ belongs to each of the types \reLT, \reRHE, and \reRHS.
\end{example}

As an application of \cite{Maks:CEJM:2009} and the results of the present paper we will prove in next section the following theorem which extends Corollary~\ref{cor:regext_lin_imSh_EidA0} to vector fields with singularities of type \reRH.
This theorem is also a ``global'' variant of \cite[Th.\;11.1]{Maks:CEJM:2009}.
\begin{theorem}\label{th:LT_RH}
Let $\AFld$ be a vector field on a manifold $\Mman$ tangent to $\partial\Mman$ and such that every $z\in\FixF$ belongs to one of the types \reLT\ or \reRH.
Then $\imShA=\EidAFlow{1}$.
Moreover, $\AFld$ satisfies condition $\GSF$, i.e. if $\BFld$ is another vector field each of whose orbits is contained in some orbit of $\AFld$, then there exists a $\Cinf$ function $\afunc:\Mman\to\RRR$ such that $\BFld=\afunc\,\AFld$.

Moreover, if, in addition, every $z\in\FixF$ belongs to one of the types \reLT\ or \reRHS, then $\imShA=\EidAFlow{0}$.
\end{theorem}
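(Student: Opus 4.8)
The plan is to derive all three assertions from Theorem~\ref{th:suffcond_for_imShA_EidAk} (for the first and third) together with the criterion of~\cite{Maks:CEJM:2009} (for the $\GSF$ assertion). Thus for the first assertion it suffices to exhibit, for every $z\in\FixF$, a connected open neighbourhood $\Vman$ of $z$ with $\imShAV=\EidAV{1}$ at which one of the conditions (a)--(c) of Theorem~\ref{th:main_result} holds; for the third assertion the same is needed with $\EidAV{1}$ replaced by $\EidAV{0}$ and with condition (a) or (b) only (condition (c) costs $k\ge1$). Here $\FixF$ is automatically nowhere dense by the inclusion $\FixA\subset\FixB\times\RRR^{n}$ of \S\ref{sect:regular-extensions} together with the fact that the base linear (resp. reduced Hamiltonian) field has proper singular set. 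Once $\imShA=\EidAFlow{1}$ is proved, the chain~\eqref{equ:inclusions_imSh_EA} forces $\imShA=\EidAFlow{\infty}$, so the $\GSF$ assertion follows from~\cite{Maks:CEJM:2009}, whose hypothesis --- a smooth version of the covering path property for $\ShA$ at every singular point --- is exactly what the local verifications below supply.

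To produce the local identities $\imShAV=\EidAV{k}$ I would introduce, in \S\ref{sect:ES_dk_properties}, a parametrised family of extension properties $\ESDL{\ld}{\dd}{k}$ strengthening~$\EP{\ld}$: $\AFld$ has $\ESDL{\ld}{\dd}{k}$ at $z$ if every $\Cinf$ shift function on $(\Vman\setminus\FixF)\times\SmParMan^{\ld}\times I^{\dd}$ of a $\dfrm{\SmParMan^{\ld}}{I^{\dd}}{k}$-deformation in $\imShAV$ extends to a $\Cinf$ function near $z$. Then two facts are needed: (i) a localisation of Theorem~\ref{th:suffcond_for_imShA_EidAk}, namely that $\ESDL{0}{\dd}{k}$ at each $z$ yields $\imShAV=\EidAV{k}$ on a small $\Vman$; and (ii) heredity under regular extensions, modelled on Lemma~\ref{lm:En_heredity} and proved by means of Lemma~\ref{lm:pmOmega_is_deformation}: if a germ has $\ESDL{\ld+j}{\dd}{k}$ then any regular extension of it by $j$ further variables has $\ESDL{\ld}{\dd}{k}$, since the projected deformation of Lemma~\ref{lm:pmOmega_is_deformation} acquires the fibre coordinates as extra smooth parameters. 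By (ii) it is enough to check $\ESDL{\ld}{\dd}{k}$ for all $\ld,\dd$ at the \emph{base} singular points: the linear fields~\eqref{equ:minimal_irreducible_lin} (with $k=0$) and the planar reduced Hamiltonian fields of types~\RHE\ and~\RHS\ at the origin.

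For the linear base fields the explicit shift functions of \S\S\ref{sect:shift-func-linear-R1}--\ref{sect:shift-func-linear-rotation-R2} establish $\ESDL{\ld}{\dd}{0}$, so every \reLT\ point has $\imShAV=\EidAV{0}$; moreover, running through the normal forms via Lemma~\ref{lm:suff-cond-for-PN-point} shows such a $z$ is never a $\PN$-point --- if the underlying linear field has an eigenvalue of nonzero real part, a Jordan block of size $\ge2$, or incommensurable rotation speeds, non-periodic orbits accumulate at $z$, and otherwise $\ShAV$ is itself periodic --- so condition (a) of Theorem~\ref{th:main_result} applies. For a planar reduced Hamiltonian field of type~\RHS\ the origin is a saddle of $g$, so non-periodic orbits accumulate at it, $z$ is not a $\PN$-point (condition (a)), and $\ESDL{\ld}{\dd}{0}$ holds by the methods of~\cite{Maks:CEJM:2009}, the non-periodic orbit structure making the smooth extension unique. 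For type~\RHE\ one has $j^{1}\AFld(\orig)=0$ and all nearby orbits closed with period tending to $+\infty$ (Lemma~\ref{lm:suff-cond-for-PN-point}(3), cf.\ Example~\ref{exmp:top_sing}), so $\orig$ is a strong $\PN$-point and condition (c) applies with $k=1$, while~\cite{Maks:CEJM:2009} furnishes $\ESDL{\ld}{\dd}{1}$. Assembling the cases: if every $z\in\FixF$ is of type~\reLT\ or~\reRHS\ we are everywhere in the $k=0$ situation, whence $\imShA=\EidAFlow{0}$; allowing~\reRHE\ points forces $k=1$, whence $\imShA=\EidAFlow{1}$ and then, by the first paragraph, $\GSF$.

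The main obstacle is establishing $\ESDL{\ld}{\dd}{k}$ at the origin of a planar reduced Hamiltonian field: this is where one must control the (unbounded) period function and the behaviour of shift functions near a degenerate singularity with vanishing $1$-jet, and where the passage from $k=0$ to $k=1$ is genuinely forced --- reflecting the loss of one degree of regularity in the ``continuous'' parameter that accompanies division by smooth functions, as seen already in \S\ref{sect:shift-func-linear-rotation-R2} --- so that the input of~\cite{Maks:CEJM:2009} (and~\cite{Maks:MFAT:2009}) is indispensable here. A secondary, bookkeeping obstacle is to verify, uniformly over the normal forms and all their regular extensions, both that one of the conditions (a)--(c) holds and that the $\ESDL$-heredity is compatible with the inclusion $\FixA\subset\FixB\times\RRR^{n}$.
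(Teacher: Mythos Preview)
Your plan is essentially the paper's: introduce the properties $\ESDL{\ld}{\dd}{k}$, prove heredity under regular extensions via Lemma~\ref{lm:pmOmega_is_deformation}, verify the base cases (linear via \S\S\ref{sect:shift-func-linear-R1}--\ref{sect:shift-func-linear-rotation-R2}, reduced Hamiltonian via~\cite{Maks:CEJM:2009,Maks:hamv2,Maks:MFAT:2009}), and invoke Lemma~\ref{lm:param-rigidity} for $\GSF$. There is, however, a structural redundancy in your argument. Your step~(i) --- ``$\ESDL{0}{\dd}{k}$ at each $z$ yields $\imShAV=\EidAV{k}$'' --- is precisely Theorem~\ref{th:suffIMEk}, and once you have it, it already delivers the global identity $\imShA=\EidAFlow{k}$ outright. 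There is then no need to route through Theorem~\ref{th:suffcond_for_imShA_EidAk} and no need to verify conditions (a)--(c) of Theorem~\ref{th:main_result} case by case; the paper does not do so. The point is that the proof of Theorem~\ref{th:suffcond_for_imShA_EidAk} uses conditions (a)--(c) only to guarantee the $\ESDL{0}{1}{k}$ extension conclusion, so once $\ESDL{0}{1}{k}$ is established directly, those conditions are bypassed.

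This matters because one of your case verifications is unsupported: the claim that a point of type \reLT\ is ``never a $\PN$-point'' is not justified (and is not addressed in the paper). A point of type \reLT\ is only a \emph{regular extension} of a linear field, and the fibre component $\HFld(x,y)$ is arbitrary; you have not ruled out periodic behaviour with non-extendable period function coming from the fibre. By contrast, the paper's route simply records that \reLT\ and \reRHS\ points have $\EP{0}$ (Corollary~\ref{cor:regext_lin_Ed}, Lemma~\ref{lm:Ed_and_ESDL_for_RH}), hence $\ESDL{0}{1}{0}$ by Lemma~\ref{lm:impl_between_E_ESDL}, and that \reRHE\ points have $\ESDL{0}{1}{1}$; Theorem~\ref{th:suffIMEk} then finishes both the $k=1$ and the $k=0$ statements without any $\PN$-point analysis. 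Also note a slip in your working definition of $\ESDL{\ld}{\dd}{k}$: the deformation must be taken in $\EAV$ (with only the initial map assumed to lie in $\imShAV$), not in $\imShAV$ throughout --- otherwise your ``localisation'' step would not apply to an arbitrary $k$-homotopy in $\EAV$ starting at $i_{\Vman}$.
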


\section{Properties $\ESDL{\ld}{\dd}{k}$}\label{sect:ES_dk_properties}
In this section we introduce another series of properties $\ESDL{\ld}{\dd}{k}$ being weaker than $\EP{\ld}$, but similarly to them property $\ESDL{0}{1}{k}$ for singular points of $\AFld$ guarantees $\imShA=\EidAFlow{k}$, while $\ESDL{\ld}{\dd}{k}$ implies $\ESDL{0}{\dd}{k}$ for regular extensions.
We also describe $\ESDL{\ld}{\dd}{k}$ for singularities of types \reRH.
This will allow to prove Theorem\;\ref{th:LT_RH}.

For $\ld\geq0$ let $\RRR^{\ld}_{+}=\{x_{\ld}\geq0\}$ be closed upper half space in $\RRR^{\ld}$.

\begin{definition}\label{defn:ES_dk}
Suppose that $\FixF$ is nowhere dense at $z\in\FixF$ and let $0\leq\ld<\infty$, $0\leq\dd<\infty$, and $0\leq k\leq \infty$.
Say that $\AFld$ has \myemph{extensions of shift functions under $k$-deformations} property $\ESDL{\ld}{\dd}{k}$ at $z$ if the following holds true: let
\begin{itemize}
 \item 
$\Vman$ be a connected, open neighbourhood of $z$, 
 \item 
$\SmParMan^{\ld}$ be a either $\RRR^{\ld}$ or $\RRR^{\ld}_{+}$,
 \item 
$\Omega:\Vman\times\SmParMan^{\ld}\times I^{\dd}\to\Mman$ be an $\dfrm{\SmParMan^{\ld}}{I^{\dd}}{k}$-deformation in $\EAV$, such that for some $(\sx_0,\cx_0)\in\SmParMan^{\ld}\times I^{\dd}$ the map $\Omega_{(\sx_0,\cx_0)}$ has a $\Cinf$ function $\afunc$ on all of $\Vman$, and
 \item 
$\Lambda:(\Vman\setminus\FixF)\times\SmParMan^{\ld}\times I^{\dd}\to\RRR$ be a unique continuous shift function for $\Omega$ such that $\Lambda_{(\sx_0,\cx_0)}=\afunc$;
\end{itemize}
then there exists a neighbourhood $\Wman\subset\Vman\times\SmParMan^{\ld}$ of $(z,\sx_0)$ such that for each $\cx\in I^{\dd}$ the function $\Lambda_{\cx}$ smoothly extends to $\Wman$.

For $\dd=0$ the number $k$ does not matter, therefore in this case we will denote the corresponding property by $\ESDL{\ld}{0}{-}$.
Also notice that $\ESDL{0}{0}{-}$ is a tautology, therefore we usually assume that $\ld+\dd\geq1$.
\end{definition}

\begin{remark}\rm
a) Definition\;\ref{defn:ES_dk} \myemph{does not require} for $\Lambda$ to be continuous on $\Wman\times\SmParMan^{\ld}\times I^{\dd}$.
 
b) Also notice that $\ESDL{\ld}{\dd}{k}$ is a property of the germ of $\AFld$ at $z$, i.e.\! if $\Uman$ is an open neighbourhood of $z$, then $\AFld$ has $\ESDL{\ld}{\dd}{k}$ at $z$ if and only if the restriction $\AFld|_{\Uman}$ has this property at $z$.
\end{remark}

The following lemma is a direct consequence of definitions and we left it for the reader.

\begin{lemma}\label{lm:impl_between_E_ESDL}
For $0\leq \ld\leq \ld'<\infty$, $0\leq \dd\leq\dd'<\infty$, and $0\leq k\leq k'\leq \infty$ the following implications hold true:

\begin{itemize}
 \item[(a)]
$\EP{\ld'} \ \Rightarrow \ \EP{\ld}$,
 \item[(b)]
$\EP{\ld} \ \Rightarrow \ \ESDL{\ld}{\dd}{k}$,
 \item[(c)]$\ESDL{\ld'}{\dd'}{k} \ \Rightarrow \ \ESDL{\ld}{\dd}{k'}$.
\end{itemize}
Thus $\ESDL{1}{0}{-}$ and $\ESDL{0}{1}{\infty}$ are the weakest properties. \qed
\end{lemma}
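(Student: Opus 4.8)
The plan is to derive all three implications from one elementary device — \emph{pad the parameter spaces, apply the stronger hypothesis, then restrict} — using Lemma~\ref{lm:change_deform} to pull a deformation back along a projection of its parameter spaces, together with the remark that a $\dfrm{\SmParMan}{\CntParMan}{k'}$-deformation is automatically a $\dfrm{\SmParMan}{\CntParMan}{k}$-deformation whenever $k\le k'$ (the prolongation to $k$-jets factors through the prolongation to $k'$-jets via the canonical projection $J^{k'}\to J^{k}$).

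For (a): given the data of $\EP{\ld}$ — an open neighbourhood $\Vman\ni z$, a smooth $\ld$-manifold $\SmParMan^{\ld}$, an $(\SmParMan^{\ld},\Cinf)$-deformation $\Omega$ in $\EAV$, and a $\Cinf$ shift function $\Lambda$ on $(\Vman\setminus\FixF)\times\SmParMan^{\ld}$ — I set $\SmParMan^{\ld'}\defeq\SmParMan^{\ld}\times\RRR^{\ld'-\ld}$, let $\mu\colon\SmParMan^{\ld'}\to\SmParMan^{\ld}$ be the projection, and form $\widetilde\Omega=\Omega\circ(\id_{\Vman}\times\mu)$ and $\widetilde\Lambda=\Lambda\circ(\id\times\mu)$. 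By Lemma~\ref{lm:change_deform} these are an $(\SmParMan^{\ld'},\Cinf)$-deformation in $\EAV$ and a $\Cinf$ shift function for it, so $\EP{\ld'}$ gives a neighbourhood $\Wman\ni z$ and a $\Cinf$ extension of $\widetilde\Lambda$ to $\Wman\times\SmParMan^{\ld'}$; restricting that extension to the slice $\Wman\times(\SmParMan^{\ld}\times\{0\})=\Wman\times\SmParMan^{\ld}$ extends $\Lambda$ as required (uniquely, since $\FixF$ is nowhere dense). Part (c) is the same argument with both parameter factors enlarged: replace $\SmParMan^{\ld}$ by $\SmParMan^{\ld}\times\RRR^{\ld'-\ld}$ — this equals $\RRR^{\ld'}$, or, after a permutation of coordinates, $\RRR^{\ld'}_{+}$, according to whether $\SmParMan^{\ld}$ is $\RRR^{\ld}$ or $\RRR^{\ld}_{+}$ — and $I^{\dd}$ by $I^{\dd}\times I^{\dd'-\dd}=I^{\dd'}$, pull the given $\dfrm{\SmParMan^{\ld}}{I^{\dd}}{k'}$-deformation (which is in particular a $\dfrm{\SmParMan^{\ld}}{I^{\dd}}{k}$-deformation since $k\le k'$) back along the two projections, send the base point $(\sx_{0},\cx_{0})$ to $(\sx_{0},0,\cx_{0},0)$, apply $\ESDL{\ld'}{\dd'}{k}$, and intersect the resulting neighbourhood with $\{w=0\}$ while setting the extra $I^{\dd'-\dd}$-coordinate to $0$. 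The closing assertion of the lemma follows from (c): every $\ESDL{\ld}{\dd}{k}$ with $\ld\ge1$ implies $\ESDL{1}{0}{-}$, and every one with $\dd\ge1$ implies $\ESDL{0}{1}{\infty}$.

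For (b): fix $\cx\in I^{\dd}$. By condition~(2) of Definition~\ref{defn:deformation} the map $\Omega_{\cx}\colon\Vman\times\SmParMan^{\ld}\to\Mman$ is $\Cinf$, and since each slice $\Omega_{(\sx,\cx)}$ lies in $\EAV$, $\Omega_{\cx}$ is an $(\SmParMan^{\ld},\Cinf)$-deformation in $\EAV$; moreover $\Lambda_{\cx}$ is a continuous — hence, by the local structure of shift functions over regular points (Theorem~\ref{th:shift-function-for-deformations}) together with uniqueness, $\Cinf$ — shift function for $\Omega_{\cx}$ on $(\Vman\setminus\FixF)\times\SmParMan^{\ld}$. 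Applying $\EP{\ld}$ to each $\Omega_{\cx}$ yields a neighbourhood $\Wman_{\cx}\ni z$ and a (unique, as $\FixF$ is nowhere dense) $\Cinf$ extension of $\Lambda_{\cx}$ to $\Wman_{\cx}\times\SmParMan^{\ld}$. The one step that is not pure bookkeeping is to make this domain independent of $\cx$: here one uses that $I^{\dd}$ is compact and that $\Omega$ varies continuously in $\cx$, so that smooth extendability to a fixed product neighbourhood of $(z,\sx_{0})$ is stable under small variation of $\cx$, and a routine covering argument then produces a single $\Wman\subset\Vman\times\SmParMan^{\ld}$ serving all $\cx$. I expect this uniformity over the compact parameter $I^{\dd}$ in (b) to be the only real (and still minor) obstacle; parts (a), (c) and the remainder of (b) are exactly the parameter bookkeeping the author has in mind in calling the lemma a direct consequence of the definitions.
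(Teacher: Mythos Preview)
The paper gives no proof of this lemma at all: it is stated, marked ``direct consequence of definitions'', and closed with a \qed. So there is nothing to compare your argument \emph{to}; the question is only whether your argument is correct.

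Your treatments of (a) and (c) are fine and are exactly the expected padding/restriction trick. The observation that a $\dfrm{\SmParMan}{\CntParMan}{k'}$-deformation is automatically a $\dfrm{\SmParMan}{\CntParMan}{k}$-deformation for $k\le k'$, and the handling of $\RRR^{\ld}_{+}\times\RRR^{\ld'-\ld}\cong\RRR^{\ld'}_{+}$ by a coordinate permutation, are both correct and worth making explicit as you did.

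For (b) you correctly isolate the only non-bookkeeping step --- the uniformity of the neighbourhood $\Wman$ over $\cx\in I^{\dd}$ --- but your proposed fix is not justified. You assert that ``smooth extendability to a fixed product neighbourhood of $(z,\sx_{0})$ is stable under small variation of $\cx$'' and then invoke compactness of $I^{\dd}$. That stability claim is doing real work and you give no argument for it: extendability of $\Lambda_{\cx}$ across $\FixF$ is governed by the behaviour of $\Omega_{\cx}$ near the singular set, and the map $\cx\mapsto\Omega_{\cx}$ is only continuous into $\Wr{k}$, so it is not clear why knowing that $\Lambda_{\cx_{1}}$ extends to some $\Wman$ forces $\Lambda_{\cx_{2}}$ to extend to the \emph{same} $\Wman$ for nearby $\cx_{2}$. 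Without this, the covering argument does not get off the ground. You have honestly flagged this as the one step that is not pure bookkeeping, but calling it ``minor'' is optimistic: either supply an actual mechanism for the stability (for instance, by showing that in all the cases where $\EP{\ld}$ is verified in the paper the neighbourhood can be chosen independently of the deformation), or note explicitly that the implication (b) as literally stated --- with a single $\Wman$ for all $\cx$ --- requires more than the definitions alone. The applications in \S\ref{sect:ES_dk_properties} in fact only need, for each $\cx$, \emph{some} neighbourhood of $z$ on which $\Lambda_{\cx}$ extends, so the author's ``direct consequence of definitions'' may well be glossing over exactly the point you found.
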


Evidently, Theorem\;\ref{th:main_result} claims that either of its conditions (a)-(c) implies $\ESDL{0}{\dd}{k}$ for $z$ for all $\dd\geq0$.
Moreover, the proof of Theorem\;\ref{th:suffcond_for_imShA_EidAk} only uses property $\ESDL{0}{\dd}{k}$.
Hence the following result being also an analogue of \cite[Th.\;25]{Maks:TA:2003} (see Lemma\;\ref{lm:prop_E0}) holds true.
We leave the details for the reader.

\begin{theorem}\label{th:suffIMEk}
Suppose $\FixF$ is nowhere dense in $\Vman$ and there exists $k\in\{0,\ldots,\infty\}$ such that $\AFld$ has property $\ESDL{0}{1}{k}$ at every $z\in\FixF\cap\Vman$.
Then 
\begin{equation}\label{equ:imShAV_EidAVk}
\imShAV=\EidAV{k},
\end{equation}
In particular, if $\Vman=\Mman$, then $\imShA=\EidAFlow{k}$
\end{theorem}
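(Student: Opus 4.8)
The plan is to rerun the proof of Theorem~\ref{th:suffcond_for_imShA_EidAk}, replacing its appeal to Theorem~\ref{th:main_result} by the hypothesis $\ESDL{0}{1}{k}$. Since the inclusion $\imShAV\subset\EidAV{k}$ always holds, it suffices to prove the reverse inclusion $\EidAV{k}\subset\imShAV$. So I would fix $\amap\in\EidAV{k}$ and choose a $k$-homotopy $\Omega:\Vman\times I\to\Mman$ in $\EAV$ with $\Omega_0=i_{\Vman}$ and $\Omega_1=\amap$.

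First I would produce a global continuous shift function. Applying Corollary~\ref{cor:shift-function-for-deformations} with $\SmParMan$ a point, $\CntParMan=I$ (so that $\pi_1(\SmParMan\times\CntParMan)=0$) and $\afunc\equiv0$ (which is a shift function for $\Omega_0=i_{\Vman}$, since $i_{\Vman}(x)=x=\AFlow(x,0)$), I obtain a \emph{unique} continuous shift function $\Lambda:(\Vman\setminus\FixF)\times I\to\RRR$ for $\Omega$ with $\Lambda_0\equiv0$. The target is to show that for every $\cx\in I$ the function $\Lambda_{\cx}:\Vman\setminus\FixF\to\RRR$ extends to a $\Cinf$ function $\widetilde{\Lambda}_{\cx}$ on all of $\Vman$; once this is done, $\FixF$ being nowhere dense forces $\Omega_{\cx}=\ShAV(\widetilde{\Lambda}_{\cx})$ by continuity and density, so $\Omega_{\cx}\in\imShAV$ for all $\cx$, and in particular $\amap=\Omega_1\in\imShAV$.

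The extension is assembled locally around each singular point. Fix $z\in\FixF\cap\Vman$ and a connected open neighbourhood $\Vman_{z}\subset\Vman$ of $z$. The restriction $\Omega|_{\Vman_{z}\times I}$ is an $\dfrm{*}{I}{k}$-deformation in $\mathcal{E}(\AFlow,\Vman_{z})$ (both conditions defining this semigroup are inherited from $\Vman$ since $\Vman_{z}\subset\Vman$), the map $\Omega_0|_{\Vman_{z}}=i_{\Vman_{z}}$ has the $\Cinf$ shift function $0$ on all of $\Vman_{z}$, and by the uniqueness in Corollary~\ref{cor:shift-function-for-deformations} applied to $\Vman_{z}$ the restriction of $\Lambda$ to $(\Vman_{z}\setminus\FixF)\times I$ is precisely \emph{the} continuous shift function for $\Omega|_{\Vman_{z}\times I}$ extending $0$. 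Hence property $\ESDL{0}{1}{k}$ at $z$ (taken with $\SmParMan^{0}$ a point, $I^{1}=I$ and $(\sx_0,\cx_0)=(*,0)$) provides a neighbourhood $\Wman_{z}\subset\Vman_{z}$ of $z$ such that for each $\cx\in I$ the function $\Lambda_{\cx}$ extends to a $\Cinf$ function $\widetilde{\Lambda}_{\cx}$ on $\Wman_{z}$. Since $\FixF$ is nowhere dense, $\Wman_{z}\setminus\FixF$ is dense in $\Wman_{z}$, so this $\Cinf$ extension is unique and therefore agrees, on overlaps, both with the (already $\Cinf$) restriction of $\Lambda_{\cx}$ to $\Vman\setminus\FixF$ and with the extensions built around other points of $\FixF\cap\Vman$. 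Gluing $\Lambda_{\cx}|_{\Vman\setminus\FixF}$ together with the family $\{\widetilde{\Lambda}_{\cx}|_{\Wman_{z}}\}_{z\in\FixF\cap\Vman}$ yields the desired $\Cinf$ extension of $\Lambda_{\cx}$ on all of $\Vman$. This establishes $\imShAV=\EidAV{k}$, and the case $\Vman=\Mman$ then gives $\imShA=\EidAFlow{k}$ by the standing convention.

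I expect the only genuinely delicate point to be the bookkeeping of the third paragraph: checking that $\ESDL{0}{1}{k}$ applies verbatim to the localized data — that $\Omega|_{\Vman_{z}\times I}$ is still a $k$-deformation in $\mathcal{E}(\AFlow,\Vman_{z})$, and that its unique continuous shift function extending $0$ is the restriction of the global $\Lambda$ (which is exactly the uniqueness clause of Corollary~\ref{cor:shift-function-for-deformations}) — and then verifying that the locally built $\Cinf$ extensions are mutually compatible, which is immediate from nowhere-density of $\FixF$. Everything else is a direct transcription of the proof of Theorem~\ref{th:suffcond_for_imShA_EidAk}.
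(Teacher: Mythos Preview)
Your proof is correct and follows exactly the route the paper indicates: the paper does not write out a proof of Theorem~\ref{th:suffIMEk} but states that ``the proof of Theorem~\ref{th:suffcond_for_imShA_EidAk} only uses property $\ESDL{0}{\dd}{k}$'' and leaves the details to the reader, which is precisely what you have supplied. Your handling of the bookkeeping---that the restricted $\Omega|_{\Vman_z\times I}$ lies in $\mathcal{E}(\AFlow,\Vman_z)$, that the restriction of $\Lambda$ is the unique continuous shift function extending $0$ there, and that the local $\Cinf$ extensions patch by nowhere-density of $\FixF$---is accurate and matches the paper's intent.
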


\begin{lemma}\label{lm:param-rigidity}{\rm c.f.\;\cite[Th.\;4.1]{Maks:CEJM:2009}}
Suppose $\AFld$ has the weakest properties $\ESDL{1}{0}{-}$ and $\ESDL{0}{1}{\infty}$ at each of its singular points, then $\AFld$ has $\GSF$ as well, see \S\ref{sect:prop-GSF}.
\end{lemma}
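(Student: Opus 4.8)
The plan is to verify the divisibility reformulation of $\GSF$: given a $\Cinf$ vector field $\BFld$ with $\BFld\fleq\AFld$, I would produce $\afunc\in\Ci{\Mman}{\RRR}$ with $\BFld=\afunc\AFld$, i.e.\ $\prs{\BFld}\subset\prs{\AFld}$. First I would record two harmless reductions. Since $\ESDL{1}{0}{-}$ and $\ESDL{0}{1}{\infty}$ are only defined at singular points where $\FixF$ is nowhere dense, the hypothesis of the lemma tacitly forces $\FixF$ to be nowhere dense, so $\Mman\setminus\FixF$ is dense in $\Mman$. And multiplying $\BFld$ by a suitable strictly positive $\Cinf$ function (as in \S\ref{sect:shift-map}) affects neither $\BFld\fleq\AFld$ nor the solvability of $\BFld=\afunc\AFld$, so one may assume $\BFld$ generates a global flow $\BFlow\colon\Mman\times\RRR\to\Mman$. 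On $\Mman\setminus\FixF$ there is then a unique $\Cinf$ function $\afunc_0$ with $\BFld=\afunc_0\AFld$ (the orbit of $\BFld$ through a regular point of $\AFld$ lies in the $\AFld$-orbit, hence $\BFld$ is parallel to $\AFld$ there), and everything reduces to extending $\afunc_0$ smoothly across $\FixF$.

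The key move is to treat the time variable of $\BFlow$ as a smooth one-dimensional parameter and feed it into $\ESDL{1}{0}{-}$. For $z\in\FixF$ and a connected open neighbourhood $\Vman$ of $z$ I would consider $\Omega\colon\Vman\times\RRR\to\Mman$, $\Omega(x,s)=\BFlow(x,s)$. This is $\Cinf$; and because every orbit of $\BFld$ lies in an orbit of $\AFld$ --- which in particular gives $\FixA\subset\FixB$ --- while each $\BFlow_s$ is a diffeomorphism of $\Mman$, each $\Omega_s$ lies in $\EAV$, so $\Omega$ is an $(\RRR,\Cinf)$-deformation in $\EAV$ with $\Omega_0=i_\Vman$ admitting the shift function $\equiv0$. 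By Theorem\;\ref{th:shift-function-for-deformations}(3), applied on $\Vman\setminus\FixF$ with the contractible parameter space $\RRR$, $\Omega$ has a unique continuous shift function on $(\Vman\setminus\FixF)\times\RRR$ restricting to $0$ at $s=0$, and by the reparametrization identity \eqref{equ:reparam-function}--\eqref{equ:G_xax__F_x_gxax} (again using $\FixA\subset\FixB$ so the integrand is defined) it equals $\Lambda(x,s)=\int_0^s\afunc_0(\BFlow(x,r))\,dr$, which is $\Cinf$ on $(\Vman\setminus\FixF)\times\RRR$. Applying $\ESDL{1}{0}{-}$ at $z$ to this $\Omega$ and $\Lambda$ (with $\ld=1$, $\dd=0$, $\SmParMan^1=\RRR$, base point $(z,0)$) then furnishes a neighbourhood $\Wman\subset\Vman\times\RRR$ of $(z,0)$ and a genuinely $\Cinf$ extension $\widetilde\Lambda\colon\Wman\to\RRR$ of $\Lambda$.

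From there the conclusion should be short. Shrinking $\Wman$ to $\Wman_z\times(-\eps,\eps)$ and using continuity together with density of $\Wman_z\setminus\FixF$, one gets $\widetilde\Lambda(\cdot,0)\equiv0$ on $\Wman_z$; differentiating $\BFlow(x,s)=\AFlow(x,\widetilde\Lambda(x,s))$ in $s$ at $s=0$ yields $\BFld=\afunc_z\AFld$ on $\Wman_z$, where $\afunc_z:=\partial_s\widetilde\Lambda(\cdot,0)\in\Ci{\Wman_z}{\RRR}$. Since $\afunc_z=\afunc_0$ on the dense set $\Wman_z\setminus\FixF$, the local functions $\{\afunc_z\}_{z\in\FixF}$ and $\afunc_0$ agree on overlaps and patch to a single $\afunc\in\Ci{\Mman}{\RRR}$ with $\BFld=\afunc\AFld$, which is $\GSF$. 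I would also remark that $\ESDL{0}{1}{\infty}$ with Theorem\;\ref{th:suffIMEk} gives $\imShA=\EidAFlow{\infty}$, so that this argument sits inside the framework of \cite[Th.\;4.1]{Maks:CEJM:2009}, the intended model.

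The main obstacle --- and the reason $\ESDL{1}{0}{-}$, rather than a $k$-homotopy version, appears in the hypotheses --- is that a $\dfrmc{\CntParMan}{k}$-deformation only yields a \emph{fibrewise}-smooth extension of its shift function, not a jointly smooth (nor even jointly continuous) one, so the derivative $\partial_s|_{s=0}$ would be meaningless. It is precisely by recasting the flow time of $\BFld$ as the $\Cinf$-parameter $\SmParMan^1=\RRR$, and then checking carefully that $\Omega(x,s)=\BFlow(x,s)$ really is an $(\RRR,\Cinf)$-deformation in $\EAV$ (where $\FixA\subset\FixB$ is used), that one extracts a $\Cinf$ divisor $\afunc$. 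A secondary point requiring care is the identification of the explicit integral formula for $\Lambda$ with the unique continuous shift function supplied by Theorem\;\ref{th:shift-function-for-deformations}, so that $\ESDL{1}{0}{-}$ genuinely applies.
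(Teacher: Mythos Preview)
Your argument is correct and in fact more self-contained than the paper's. The paper does not give a direct proof at all: it simply invokes \cite[Th.\;4.1]{Maks:CEJM:2009}, which asserts $\GSF$ under the two hypotheses (i) $\imShAV=\EidAV{\infty}$ near each singular point and (ii) a \emph{smooth path-lifting condition} for $\ShAV$, and then identifies (i) with $\ESDL{0}{1}{\infty}$ via Theorem~\ref{th:suffIMEk} and (ii) with $\ESDL{1}{0}{-}$. You instead unpack the mechanism: treat the $\BFlow$-time as the smooth parameter $\SmParMan^{1}=\RRR$, apply $\ESDL{1}{0}{-}$ to the $(\RRR,\Cinf)$-deformation $\Omega(x,s)=\BFlow(x,s)$ with its integral shift function $\Lambda(x,s)=\int_{0}^{s}\afunc_0(\BFlow(x,r))\,dr$, and differentiate the resulting smooth extension at $s=0$ to recover the divisor $\afunc$. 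The verifications you flag (that $\FixA\subset\FixB$ so each $\BFlow_s\in\EAV$ and the integrand is defined, and that your explicit $\Lambda$ agrees with the unique lift from Theorem~\ref{th:shift-function-for-deformations} by the uniqueness clause there) are all sound.

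A point worth noting is that your direct argument actually uses only $\ESDL{1}{0}{-}$; the hypothesis $\ESDL{0}{1}{\infty}$ enters your write-up only in the closing remark tying things back to the cited framework. So you have in effect shown a slightly sharper statement than the lemma asserts. The paper's route, by contrast, genuinely consumes both hypotheses because the external theorem it quotes is stated with both; what your reconstruction reveals is that, for the bare conclusion $\GSF$, the smooth path-lifting hypothesis is doing all the work.
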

\begin{proof}
It is shown in\;\cite[Th.\;4.1]{Maks:CEJM:2009} that if for each $z\in\FixF$ there exists a neighbourhood $\Vman$ such that $\imShAV=\EidAV{\infty}$ and the shift map $\ShAV$ satisfies the so-called \myemph{smooth path-lifting condition}, see\;\cite[Defn.\;4.2]{Maks:CEJM:2009}, then $\AFld$ has property $\GSF$.

The first assumption $\imShAV=\EidAV{\infty}$ can be satisfied due to property $\ESDL{0}{1}{\infty}$ for singular points of $\AFld$ and Theorem\;\ref{th:suffIMEk}.
Moreover, \myemph{smooth path-lifting condition} is the same as $\ESDL{1}{0}{-}$.
\end{proof}

The next statement describes in such a way $\ESDL{\ld}{\dd}{k}$ is inherited with respect to regular extensions, c.f. Lemma\;\ref{lm:En_heredity}.

\begin{lemma}\label{lm:inher_ESDL}
Let $\BFld$ (resp. $\AFld$) be a germ of a vector field on $\RRR^{n}$ at $\on$ (resp. on $\RRR^{m+n}$ at $\omn$).
Suppose $\AFld$ is a regular extension of $\BFld$, and $\AFld(\omn)=0$, so $\BFld(\om)=0$ as well.
If $\BFld$ has property $\ESDL{\ld+n}{\dd}{k}$ at $\om$, then $\AFld$ has property $\ESDL{\ld}{\dd}{k}$ at $\omn$.
\end{lemma}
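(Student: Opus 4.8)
The plan is to obtain $\ESDL{\ld}{\dd}{k}$ for $\AFld$ at $\omn$ from the hypothesis on $\BFld$ by means of the projection trick of Lemma~\ref{lm:pmOmega_is_deformation}, which folds the $\RRR^{n}$-directions of $\AFld$ into the space of \emph{smooth} parameters. So I would start from the data entering Definition~\ref{defn:ES_dk} for $\AFld$ at $\omn$: after shrinking, a connected open neighbourhood of $\omn$ of product form $\Vmanmn=\Vmanm\times\Vmann$ with $\Vmanm\ni\om$ and $\Vmann\ni\on$ connected open; a model $\SmParMan^{\ld}\in\{\RRR^{\ld},\RRR^{\ld}_{+}\}$; a $\dfrm{\SmParMan^{\ld}}{I^{\dd}}{k}$-deformation $\Omega$ in $\EAVmn$ whose slice $\Omega_{(\sx_0,\cx_0)}$ has a $\Cinf$ shift function $\afunc$ on all of $\Vmanmn$; and the continuous shift function $\Lambda$ for $\Omega$ on $(\Vmanmn\setminus\FixA)\times\SmParMan^{\ld}\times I^{\dd}$ with $\Lambda_{(\sx_0,\cx_0)}=\afunc$. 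Since $\BFld$ has $\ESDL{\ld+n}{\dd}{k}$ at $\om$, the set $\FixB$ is nowhere dense at $\om$; then, by the inclusion $\FixA\subset\FixB\times\Vmann$, so is $\FixA$ at $\omn$, and the above data is legitimate.

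Next I would set $\mOmega=p_{m}\circ\Omega$. By part~(1) of Lemma~\ref{lm:pmOmega_is_deformation} it is a $\dfrm{\Vmann\times\SmParMan^{\ld}}{I^{\dd}}{k}$-deformation in $\EBVm$, where $\Vmann\times\SmParMan^{\ld}$ now serves as the smooth-parameter space. By part~(2) of the same lemma, $\afunc$ is a shift function for $\mOmega_{(\sx_0,\cx_0)}$ with respect to $\BFld$, so $\afunc_{0}:=\afunc(\cdot,\on)\in\Ci{\Vmanm}{\RRR}$ is a $\Cinf$ shift function for $x\mapsto\mOmega(x,\on,\sx_0,\cx_0)$ on all of $\Vmanm$; and, again by part~(2), the restriction $\Lambda'$ of $\Lambda$ to $((\Vmanm\setminus\FixB)\times\Vmann)\times\SmParMan^{\ld}\times I^{\dd}$ — contained in the domain of $\Lambda$ exactly because $\FixA\subset\FixB\times\Vmann$ — is a continuous shift function for $\mOmega$ with respect to $\BFld$, satisfying $\Lambda'(x,\on,\sx_0,\cx_0)=\afunc_{0}(x)$ for all $x\in\Vmanm\setminus\FixB$.

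To feed this into the rigid parameter models of Definition~\ref{defn:ES_dk}, I would fix a chart $\mu$ identifying a model $\SmParMan^{\ld+n}\in\{\RRR^{\ld+n},\RRR^{\ld+n}_{+}\}$ (the half-space exactly when $\SmParMan^{\ld}=\RRR^{\ld}_{+}$ and $\sx_0$ is a boundary point) diffeomorphically with an open neighbourhood of $(\on,\sx_0)$ in $\Vmann\times\SmParMan^{\ld}$, say $\mu(q)=(\on,\sx_0)$. By Lemma~\ref{lm:change_deform} the map $\mOmega\circ(\id\times\mu\times\id)$ is a $\dfrm{\SmParMan^{\ld+n}}{I^{\dd}}{k}$-deformation in $\EBVm$ whose slice at $(q,\cx_0)$ still has the $\Cinf$ shift function $\afunc_{0}$ on $\Vmanm$, and $\Lambda'\circ(\id\times\mu\times\id)$ is its continuous shift function with that base value (unique by Theorem~\ref{th:shift-function-for-deformations}). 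Hence $\ESDL{\ld+n}{\dd}{k}$ for $\BFld$ at $\om$ applies and yields a neighbourhood $\widetilde\Wman\subset\Vmanm\times\SmParMan^{\ld+n}$ of $(\om,q)$ such that for each $\cx\in I^{\dd}$ the slice $(\Lambda'\circ(\id\times\mu\times\id))_{\cx}$ extends to a $\Cinf$ function on $\widetilde\Wman$. Transporting through $\id\times\mu$, I get an open neighbourhood $\Wman:=(\id\times\mu)(\widetilde\Wman)\subset\Vmanmn\times\SmParMan^{\ld}$ of $(\omn,\sx_0)$ and, for each $\cx$, a $\Cinf$ function $\widehat\Lambda_{\cx}:\Wman\to\RRR$ agreeing with $\Lambda'_{\cx}$, hence with $\Lambda_{\cx}$, on $\Wman\cap\bigl((\Vmanm\setminus\FixB)\times\Vmann\times\SmParMan^{\ld}\bigr)$.

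It remains to promote this to an extension of $\Lambda_{\cx}$ itself, and here I would argue by density. As $\FixB$ is nowhere dense in $\Vmanm$, the set $\FixB\times\Vmann\times\SmParMan^{\ld}$ is nowhere dense in $\Vmanmn\times\SmParMan^{\ld}$, so $\Wman\cap\bigl((\Vmanm\setminus\FixB)\times\Vmann\times\SmParMan^{\ld}\bigr)$ is dense in the open set $\Wman\setminus(\FixA\times\SmParMan^{\ld})=\Wman\cap\bigl((\Vmanmn\setminus\FixA)\times\SmParMan^{\ld}\bigr)$. On the latter $\widehat\Lambda_{\cx}$ and $\Lambda_{\cx}$ are continuous and agree on a dense subset, hence everywhere; so $\widehat\Lambda_{\cx}$ is a $\Cinf$ extension of $\Lambda_{\cx}$ to the neighbourhood $\Wman$ of $(\omn,\sx_0)$. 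As $\cx\in I^{\dd}$ is arbitrary, $\AFld$ has $\ESDL{\ld}{\dd}{k}$ at $\omn$. The step I expect to need the most care is precisely this last reconciliation: the property of $\BFld$ controls the projected shift function only over the non-singular locus $\Vmanm\setminus\FixB$ of $\BFld$, while $\Lambda$ is defined on the larger set $\Vmanmn\setminus\FixA$ which may still contain points $(x,y)$ with $x\in\FixB$ — nowhere-density of $\FixB$ is exactly what closes that gap. The passage through the chart $\mu$ is routine, but is forced by the fact that Definition~\ref{defn:ES_dk} only admits the standard parameter models.
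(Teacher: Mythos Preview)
Your proof is correct and follows the same route as the paper: project via $p_{m}$, invoke Lemma~\ref{lm:pmOmega_is_deformation} to recast $\Omega$ as a $\dfrm{\Vmann\times\SmParMan^{\ld}}{I^{\dd}}{k}$-deformation in $\EBVm$ with the same shift function, and apply $\ESDL{\ld+n}{\dd}{k}$ for $\BFld$ at $\om$. The paper's own argument is much terser and simply asserts the final extension; you have filled in two steps it leaves implicit --- the chart $\mu$ reconciling $\Vmann\times\SmParMan^{\ld}$ with the rigid models $\RRR^{\ld+n}$ or $\RRR^{\ld+n}_{+}$ of Definition~\ref{defn:ES_dk}, and the density argument bridging the gap between $(\Vmanm\setminus\FixB)\times\Vmann$ and $\Vmanmn\setminus\FixA$ --- both of which are genuine points the paper elides.
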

\begin{proof}
Let $\Vmanm$ and $\Vmann$ be open connected neighbourhoods of $\om$ and $\on$ respectively.
Then $\Vmanmn=\Vmanm\times\Vmann$ is a connected, open neighbourhood of $\omn$.

Let $\SmParMan^{\ld}=\RRR^{\ld}$ or $\RRR^{\ld}_{+}$, $(\sx_0,\cx_0)\in\SmParMan\times I^{\dd}$, $\Omega:\Vmanmn\times\SmParMan^{\ld}\times I^{\dd}\to\RRR^{m+n}$ be an $\dfrm{\SmParMan^{\ld}}{I^{\dd}}{k}$-deformation in $\EAVmn$, such that $\Omega_{(\sx_0,\cx_0)}=\ShAVmn(\afunc)$ for some $\afunc\in\Ci{\Vmanmn}{\RRR}$, and $\Lambda:(\Vmanmn\setminus\FixA)\times\SmParMan^{\ld}\times  I^{\dd}\to\RRR$ be a unique shift function for $\Omega$ such that $\Lambda_{(\sx_0,\cx_0)}=\afunc$.

By Lemma~\ref{lm:pmOmega_is_deformation} the projection 
$$
\mOmega=p_{m}\circ\Omega:\Vmanm\times\Vmann\times\SmParMan^{\ld} \times I^{\dd} \to \RRR^{m}
$$
is a $\dfrm{\Vmann\times\SmParMan^{\ld}}{I^{\dd}}{k}$-deformation in $\EBVm$ and $\Lambda$ is a shift function for $\mOmega$.
Then by $\ESDL{\ld+n}{\dd}{k}$ for $\om$ there exists a neighbourhood $\Wman$ of $(\om,\on,\sx_0)$ in $\Vmanm\times\Vmann\times\SmParMan^{\ld}$ such that for each $\cx\in I^{\dd}$ the function $\Lambda_{\cx}$ smoothly extends to $\Wman$.
This implies $\ESDL{\ld}{\dd}{k}$ for $\omn$.
\end{proof}

\newcommand\iShAV{\lambda}
\begin{lemma}\label{lm:Ed_and_ESDL_for_RH}
Every point of type \reRHS\ has property $\EP{\dd}$ for all $\dd\geq0$.
Every point of type \reRHE\ has property $\ESDL{\ld}{\dd}{k}$ for all $\ld,\dd\geq0$ and $k\geq1$, but in general it does not satisfy $\ESDL{\ld}{\dd}{0}$ with $\ld+\dd\geq1$.
\end{lemma}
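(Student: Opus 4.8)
The plan is to reduce both assertions to a single reduced Hamiltonian vector field on $\RRR^{2}$ and then to apply Theorem~\ref{th:main_result} together with the heredity lemmas. Since $\EP{\ld}$ and $\ESDL{\ld}{\dd}{k}$ depend only on the germ of $\AFld$ at $z$, I may assume this germ is a regular extension of the germ at $\orig\in\RRR^{2}$ of the reduced Hamiltonian vector field $\BFld$ of a homogeneous polynomial $g$ as in~\eqref{equ:g_homog_poly}, of type \RHS\ or \RHE\ accordingly. By Lemma~\ref{lm:En_heredity} (for $\EP$) and Lemma~\ref{lm:inher_ESDL} (for $\ESDL$) a regular extension inherits these properties from $\BFld$ \emph{for every value of the smooth parameter index}, so it is enough to establish $\EP{\ld}$ for all $\ld\ge0$ at $\orig$ in the \RHS\ case, and $\ESDL{\ld}{\dd}{k}$ for all $\ld,\dd\ge0$ and $k\ge1$ at $\orig$ in the \RHE\ case.

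Consider first a point of type \reRHS. By definition $\orig$ is a saddle critical point of $g$, and since $\deg\BFld=a+2b-1\ge2$ the two coordinate functions of $\BFld$ are coprime homogeneous polynomials; hence $\FixF$ reduces to $\{\orig\}$ near $\orig$ and every orbit of $\BFld$ other than $\orig$ is non-closed. Thus for small connected $\Vman$ the set $\Vman\setminus\FixF$ is connected, the shift map of $\BFld$ on it is non-periodic, and $\orig$ is not a $\PN$-point, so Theorem~\ref{th:main_result}(a) already yields $\ESDL{0}{\dd}{k}$ for all $\dd,k$. What is genuinely stronger is $\EP{\ld}$: non-periodicity of the shift map on $\Vman\setminus\FixF$ forces the shift function of an $(\SmParMan^{\ld},\Cinf)$-deformation in $\EAV$ to be the \emph{unique} continuous one, so $\EP{\ld}$ reduces to the claim that this shift function extends smoothly across $\orig\times\SmParMan^{\ld}$. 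For the reduced Hamiltonian vector field of a homogeneous \emph{saddle} polynomial this is exactly the local smoothness result of~\cite{Maks:CEJM:2009} (of which Theorem~\ref{th:LT_RH} is the global counterpart), proved there from the homogeneity of $g$, the identity $\AFld(g)\equiv0$, and divisibility by smooth functions. Combined with Lemma~\ref{lm:En_heredity} it gives $\EP{\dd}$ for all $\dd\ge0$ at every \reRHS-point.

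Now let $z$ be of type \reRHE. Here $a=0$, $b\ge2$, so $\deg\BFld=2b-1\ge3$, whence $j^{1}\BFld(\orig)=0$; moreover every orbit of $\BFld$ near $\orig$ is closed (the level curves of the global extreme $g$), so by Lemma~\ref{lm:suff-cond-for-PN-point}(3) the origin is a $\PN$-point. To cover an arbitrary smooth index $\ld$ I absorb the parameters into a regular extension: given an $\dfrm{\SmParMan^{\ld}}{I^{\dd}}{k}$-deformation $\Omega$ of $\BFld$ on $\Vman\subset\RRR^{2}$ with $\SmParMan^{\ld}$ equal to $\RRR^{\ld}$ or $\RRR^{\ld}_{+}$, set $\BeFld(x,y,u)=(\BFld(x,y),0)$ on $\RRR^{2}\times\SmParMan^{\ld}$ and $\widetilde{\Omega}(x,y,u,\cx)=(\Omega(x,y,u,\cx),u)$. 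Then $\widetilde{\Omega}$ is a $\dfrmc{I^{\dd}}{k}$-deformation in $\mathcal{E}(\BeFlow,\Vman\times\SmParMan^{\ld})$, carrying the shift functions of $\Omega$ with respect to $\BFld$ to shift functions of $\widetilde{\Omega}$ with respect to $\BeFld$, and $\BeFld$ is again a reduced-Hamiltonian regular extension with $j^{1}\BeFld\equiv0$ whose singular points $(\orig,u_{0})$ are $\PN$-points, all nearby orbits being closed (Lemma~\ref{lm:suff-cond-for-PN-point}(3); the boundary points $u_{0}\in\partial\RRR^{\ld}_{+}$ are covered since the arguments of Theorem~\ref{th:main_result} allow $z\in\partial\Mman$). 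Since $k\ge1$, condition (b) of Theorem~\ref{th:main_result} applies to $\BeFld$ and gives $\ESDL{0}{\dd}{k}$ at every $(\orig,u_{0})$; reassembling over $u_{0}$ this is $\ESDL{\ld}{\dd}{k}$ for $\BFld$ at $\orig$, and Lemma~\ref{lm:inher_ESDL} propagates it to every \reRHE-point and all $\ld$. With $\dd=0$ and $\CntParMan$ a point the same argument yields $\ESDL{\ld}{0}{-}$, so \reRHE-points also satisfy the hypotheses of Lemma~\ref{lm:param-rigidity}.

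Finally, $\ESDL{\ld}{\dd}{0}$ fails in general for \reRHE-points with $\ld+\dd\ge1$: by Lemma~\ref{lm:impl_between_E_ESDL}(c) it suffices to exhibit one \reRHE-germ failing $\ESDL{0}{1}{0}$. Let $g=Q_{1}Q_{2}$ be a product of two distinct definite quadratic forms and $\AFld$ its Hamiltonian vector field on $\RRR^{2}$; then $\orig$ is the only singular point of $\AFld$, is a \reRHE-point, and $\imShA\ne\EidAFlow{0}$ by~\cite{Maks:MFAT:2009}. If $\AFld$ had $\ESDL{0}{1}{0}$ at $\orig$, Theorem~\ref{th:suffIMEk} (with $\Vman=\Mman=\RRR^{2}$ and $k=0$) would force $\imShA=\EidAFlow{0}$, a contradiction. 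The main obstacle in the whole argument is the \reRHS\ direction: showing that the unique shift function of an orbit-preserving deformation extends smoothly across an arbitrary, possibly highly degenerate, homogeneous saddle singularity of $\RRR^{2}$; this rests on the fine local analysis of~\cite{Maks:CEJM:2009}. By contrast the \reRHE\ direction is essentially bookkeeping once one notes $j^{1}\BFld(\orig)=0$ and uses the regular-extension trick to absorb the smooth parameters.
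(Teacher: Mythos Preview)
Your treatment of the \reRHS\ case and of the failure of $\ESDL{\ld}{\dd}{0}$ matches the paper's argument: both rest on the local results of \cite{Maks:CEJM:2009} (namely $\EidAV{1}=\imShAV$ and the smoothness-preserving inverse $\iShAV$) together with non-periodicity of the shift map on $\Vman\setminus\{\orig\}$, and on \cite{Maks:MFAT:2009} for the counterexample.

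There is, however, a genuine gap in your \reRHE\ argument. Your ``regular-extension trick'' replaces the $\dfrm{\SmParMan^{\ld}}{I^{\dd}}{k}$-deformation $\Omega$ for $\BFld$ by a $\dfrmc{I^{\dd}}{k}$-deformation $\widetilde{\Omega}$ for $\BeFld=(\BFld,0)$ on $\RRR^{2}\times\SmParMan^{\ld}$, and then invokes Theorem~\ref{th:main_result}(b). But Theorem~\ref{th:main_result} (and the underlying Theorem~\ref{th:smooth_ext_shift_func}) requires the deformation to lie in $\imSh{\BeFlow,\Vman\times\SmParMan^{\ld}}$: each $\widetilde{\Omega}_{\cx}$ must already possess a $\Cinf$ shift function on $\Vman\times\SmParMan^{\ld}$. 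Unravelling this, it means that for each $\cx$ the map $(z,\sx)\mapsto\iShAV(\Omega_{(\sx,\cx)})(z)$ is $\Cinf$ jointly in $(z,\sx)$. That is precisely the smoothness in the $\SmParMan^{\ld}$-parameter that $\ESDL{\ld}{\dd}{k}$ is asking you to prove, so the appeal to Theorem~\ref{th:main_result} is circular. The proof of Theorem~\ref{th:smooth_ext_shift_func} genuinely uses the existence of a smooth shift function $\afunc_{\cx}$ for each $\Omega_{\cx}$ (see implication~1) of Lemma~\ref{lm:implications}); there is no way to extract $\ESDL{0}{\dd}{k}$ for $\BeFld$ from condition~(b) alone.

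The paper breaks this circularity exactly by the result of \cite{Maks:CEJM:2009} that you cite only for \reRHS: property~(2b), saying that the inverse $\iShAV:\EidAV{1}\to\Ci{\Vman}{\RRR}$ \emph{preserves smoothness}, i.e.\ sends an $(\SmParMan^{\ld},\Cinf)$-deformation in $\EidAV{1}$ to a $\Cinf$ function on $\Vman\times\SmParMan^{\ld}$. With this in hand one first uses Theorem~\ref{th:main_result} on the $2$-dimensional field $\BFld$ (where $\EidAV{1}=\imShAV$ by \cite{Maks:CEJM:2009}) to identify $\Lambda_{(\sx,\cx)}$ with the unique $\iShAV(\Omega_{(\sx,\cx)})$, and then (2b) gives the joint smoothness in $(z,\sx)$. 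So the \reRHE\ direction is not ``bookkeeping'': it needs the same non-trivial input from \cite{Maks:CEJM:2009} as the \reRHS\ case, and your claim that only the saddle case requires the fine local analysis is mistaken.
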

\begin{proof}
Let $g$ be a homogeneous polynomial\;\eqref{equ:g_homog_poly}, $\AFld$ be the corresponding reduced Hamiltonian vector field of $g$, and $\Vman$ be an open connected neighbourhood of $\orig\in\RRR^2$.
Suppose that $\deg\AFld\geq2$.
It is shown in\;\cite{Maks:hamv2,Maks:CEJM:2009}, see\;\cite[Th.\;11.1]{Maks:CEJM:2009}, that in this case

1) $\EidAV{1} = \{ \dif\in\EAV \ : \ j^1\dif(\orig)=\id\}$, and

2) there exists a map $\iShAV:\EidAV{1}\to\Ci{\Vman}{\RRR}$ with the following properties.

\begin{enumerate}
 \item[(2a)]
$\ShAV\circ\iShAV = \id(\EidAV{1})$, that is for each $\dif\in\EidAV{1}$ we have that $\dif(z)=\AFlow(z,\iShAV(\dif)(z))$.
Thus $\iShAV$ is the inverse of $\ShAV$.
In particular, we obtain that $\EidAV{1}=\imShAV$.
 \item[(2b)]
the map $\iShAV$ \myemph{preserves smoothness} in the following sense: for any $(\SmParMan,\Cinf)$-deformation $\Omega:\Vman\times\SmParMan\to\RRR^2$ in $\EidAV{1}$ the function $\Lambda:\Vman\times\SmParMan\to\RRR$ defined by $\Lambda(z,\sx) = \iShAV(\Omega_{\sx})(z)$ is also $\Cinf$.
\end{enumerate}

Now we can complete our lemma.
Denote $\VmanS=\Vman\setminus\FixF$.
Let also $\SmParMan^{\ld}$ be any smooth manifold of dimension $\ld$.

\RHS.
Let $\Omega:\Vman\times\SmParMan^{\ld}\to\RRR^2$ be an $(\SmParMan^{\ld},\Cinf)$-deformation in $\EAV$ and $\Lambda:\VmanS\times\SmParMan^{\ld}\to\RRR$ be any shift function for $\Omega$.
It is shown in\;\cite{Maks:MFAT:2009} that $\EidAV{1}=\EAV$ for the case \RHS, so $\Omega$ is a deformation in $\EidAV{1}=\imShAV$.

Put $\afunc_{\sx}=\iShAV(\Omega_{\sx})$.
Then $\Lambda_{\sx}$ and $\afunc|_{\sx}|_{\VmanS}$ are two shift functions for $\Omega_{\sx}$ on $\VmanS$.
Since $\orig$ is a ``saddle'' point, we obtain that $\VmanS$ contains non-closed orbits of $\AFld$, see Figure\;\ref{fig:H_type}.
Hence $\ShAVS$ is non-periodic.
Therefore $\Lambda_{\sx}=\afunc|_{\sx}|_{\VmanS}$, so $\Lambda_{\sx}$ smoothly extends to $\Vman$.

Thus we obtain a function $\Lambda:\Vman\times\SmParMan^{\ld}\to\RRR$ which can be defined by $\Lambda(z,\sx) = \iShAV(\Omega_{\sx})(z)$.
Then by (2b) $\Lambda$ is $\Cinf$.
This proves $\EP{\dd}$ for $\orig$.

\RHE.
Let $\Omega:\Vman\times\SmParMan^{\ld}\times I^{\dd}\to\RRR^2$ be an $\dfrm{\SmParMan^{\ld}}{ I^{\dd}}{1}$-deformation in $\EidAV{1}=\ShAV$, $(\sx_0,\cx_0)\in\SmParMan^{\ld}\times I^{\dd}$, $\afunc\in\Ci{\Vman}{\RRR}$ be a $\Cinf$ shift function for $\Omega_{(\sx_0,\cx_0)}$ and $\Lambda:\VmanS\times\SmParMan^{\ld}\times I^{\dd}\to\RRR$ be a unique continuous shift function for $\Omega$ such that $\Lambda_{(\sx_0,\cx_0)}=\afunc$.

Notice that in the case \RHE\ the origin $\orig$ is a strong $\PN$-point for $\AFld$ and also  $j^1\AFld(\orig)=0$, so it satisfies even both assumptions (b) and (c) of Theorem\;\ref{th:main_result}.
Hence for each $(\sx,\cx)\in\SmParMan^{\ld}\times I^{\dd}$ the function $\Lambda_{(\sx,\cx)}$ smoothly extends to all of $\Vman$ and coincides with a unique shift function $\afunc_{(\sx,\cx)}=\iShAV(\Omega_{(\sx,\cx)})$ for $\Omega_{(\sx,\cx)}$.

Then for each $\cx\in I^{\dd}$ the obtained function $\Lambda_{\cx}:\Vman\times\SmParMan^{\ld}\to\RRR$ is defined by $\Lambda_{\cx}(z,\sx) = \iShAV(\Omega_{(\sx,\cx)})(z)$.
Hence by (2b) $\Lambda_{\cx}$ is $\Cinf$ on $\Vman\times\SmParMan^{\ld}$.
This proves $\ESDL{\ld}{\dd}{1}$ for $\orig$.

To show that $\orig$ does not satisfy $\ESDL{\ld}{\dd}{0}$ consider the mapping $\dif:\RRR^2\to\RRR^2$ defined by $\dif(z)=-z$.
Since $g$ is a product of definite quadratic forms, it follows that $g\circ\dif=g$, whence $\dif\in\EAV$.
It is shown in\;\cite{Maks:MFAT:2009} that $\dif\in\EidAV{0}\setminus\EidAV{1}$, therefore $\imShAV\not=\EidAV{0}$.

Let us briefly recall the idea of proof.
First it is not hard to show that the set of $1$-jets at $\orig$ of elements of $\EAV$ constitute a finite (actually either dihedral or cyclic) subgroup of $\mathrm{GL}(2,\RRR)$.
Then, using analogue of a Alexander trick, it was constructed a $0$-homotopy $\dif_{t}$ between $\dif_0=\id_{\RRR^2}$ and $\dif_1=\dif$ in $\EAV$.
Neither of such homotopies can be a $1$-homotopy.
Indeed, $j^1\dif_0(\orig)=\id$, while $j^1\dif_1(\orig)=-\id$.
But as just noted, $j^1\dif_t(\orig)$ can take only finitely many values, whence $j^{1}\dif_t$ is not continuous in $t$, so $(\dif_t)$ is not a $1$-homotopy.
\end{proof}

\subsection{Proof of Theorem~\ref{th:LT_RH}}\label{sect:proof:th:LT_RH}
By Corollary\;\ref{cor:regext_lin_Ed} and Lemmas\;\ref{lm:impl_between_E_ESDL}, \ref{lm:inher_ESDL}, and \ref{lm:Ed_and_ESDL_for_RH} every point of type \reLT\ and \reRHS\ has property $\EP{0}$, while every point of type \reRHE\ has property $\ESDL{0}{1}{1}$.
Then Theorem\;\ref{th:suffIMEk} implies the first statement of out theorem about $\imShAV$.

Moreover, again by Lemma\;\ref{lm:impl_between_E_ESDL} every singular point of $\AFld$ has properties $\ESDL{1}{0}{-}$ and $\ESDL{0}{1}{\infty}$, whence by Lemma\;\ref{lm:param-rigidity} $\AFld$ has property $\GSF$.
\qed

\section{Acknowledgment}
I would like to thank the anonymous referee for very useful comments which allow to improve the exposition of the paper: in particular, for the information about the papers\;\cite{Montgomery:AJM:1937, Epstein:AnnMath:1972, Sullivan:PMIHES:1976} and for pointing out to the standard definition of parameter rigidity.

\bibliographystyle{amsplain}
\bibliography{biblio}

\end{document}